\documentclass[reqno]{amsart}
\usepackage{amssymb,amsmath,amsthm,mathrsfs,multirow,enumerate,mathdots}
\usepackage{color}
\usepackage{graphicx} 
\usepackage[all]{xy}
\usepackage{enumitem}
\usepackage{breqn}
\usepackage{orcidlink}
\usepackage{tikz}
\usepackage[capitalize]{cleveref}
\usepackage{dsfont}
\usepackage{subcaption}

\numberwithin{equation}{section}
\newtheorem{thm}{Theorem}[section]
\newtheorem{lem}[thm]{Lemma}
\newtheorem{cor}[thm]{Corollary}
\newtheorem{prop}[thm]{Proposition}
\theoremstyle{definition}
\newtheorem{ex}[thm]{Example}

\newtheorem{rmk}[thm]{Remark}

\theoremstyle{definition} \numberwithin{equation}{section}

\begin{document}
\title[Quantum fractional revival on zero-divisor graphs over
$\mathbb{Z}_n$]{Quantum fractional revival on zero-divisor graphs over $\mathbb{Z}_n$}

\author{Bui Phuoc Minh$^{1,2}$ \orcidlink{0000-0002-8876-5766} and Songpon
Sriwongsa$^*$ \orcidlink{0000-0002-5137-8113}}

\thanks{*Corresponding Author}

\address{
	Bui Phuoc Minh \\
	$^{1}$Department of Mathematics, Faculty of Science, Mahidol University,
	Bangkok 10400, Thailand \\
	$^{2}$Centre of Excellence in Mathematics, MHESI, Bangkok 10400, Thailand
}
\email{\tt minhbui.phu@mahidol.ac.th}

\address{Songpon Sriwongsa \\ Department of Mathematics \\ Faculty of Science
	\\ King Mongkut's University of Technology Thonburi (KMUTT) \\ 126 Pracha Uthit Rd., Bang Mod, Thung Khru \\ Bangkok 10140, Thailand}
\email{\tt  songponsriwongsa@gmail.com}

\keywords{Perfect state transfer, fractional revival, zero-divisor graph}

\subjclass[2020]{05C50, 15A18}

\begin{abstract}
In this paper, we characterize the existence of perfect state transfer (PST) and
fractional revival in continuous-time quantum walks on the zero-divisor graph
$\Gamma(\mathbb{Z}_n)$. By using the canonical equitable partition of
$\Gamma(\mathbb{Z}_n)$ induced by the proper divisors of $n$, we derive a sufficient
condition on $n$ for PST to occur between a pair of vertices. We show that 
fractional revival is restricted to cells of size $2$ within the equitable
partition. Furthermore, assuming $-1$ is not an eigenvalue of the quotient
spectrum, we establish that two vertices in $\Gamma(\mathbb{Z}_n)$ are strongly
cospectral if and only if they form a cell of size $2$ within the equitable
partition that is either a set of false twins or true twins. Finally, we provide
a characterization of fractional revival on bipartite $\Gamma(\mathbb{Z}_n)$ and
prove the non-existence of fractional revival on $\Gamma(\mathbb{Z}_{p^2q})$. 
\end{abstract}

\date{}

\maketitle

\section{Introduction}
Let $G = (V, E)$ be a finite, undirected, simple graph with the adjacency matrix $A(G)$.
The \emph{transition matrix} $H(t)$ associated with $A(G)$ is defined as
\[
	H(t) := \exp(itA(G)) = \sum_{k \ge 0} \frac{(it)^k}{k!} (A(G))^k, \quad t \ge 0,
\]
where $i=\sqrt{-1}$.
Since $A(G)$ is real and symmetric, $H(t)$ is also symmetric and unitary.
In particular,
\[
	\overline{H(t)}=H(t)^{-1},
\]
where $\overline{\ \cdot\ }$ denotes complex conjugation, and hence
\[
	(\overline{H(t)})^{T}=H(t)^{-1}.
\]
The matrix $H(t)$ governs a continuous-time quantum walk on $G$,
a fundamental concept in quantum computation and quantum information theory.
Quantum walks serve as algorithmic tools in various contexts, including
Grover-type search procedures \cite{Grover1996Fast} and the decision-tree algorithm of Farhi
and Gutmann \cite{Farhi1998Quantum}.
Moreover, Bose \cite{Bose2003Quantum} proposed quantum walks on graphs as models for quantum state transfer in quantum communication networks.

\medskip

We say that $(\alpha, \beta)$-\emph{fractional revival} (FR) occurs from a
vertex $u$ to a vertex $v$ at time $\tau$ if
\[
	|H(\tau)_{u,u}|^2 + |H(\tau)_{u,v}|^2 = 1,
\]
where $\alpha = H(\tau)_{u,u}$ and $\beta = H(\tau)_{u,v}$ denote the complex
amplitudes at $u$ and $v$, respectively \cite{Monterde2023Fractional}. The case $\beta \neq 0$ is called
\emph{proper} FR, while the case $\beta = 0$ is called \emph{periodicity}
relative to $u$. In particular, when $\alpha = 0$ (and hence $\beta = 1$), we say that the graph $G$
exhibits \emph{perfect state transfer} (PST) from $u$ to $v$. Note that, FR is
a generalization of both PST ($\alpha = 0$) and periodicity ($\beta = 0$).
Furthermore, since proper $(\alpha, \beta)$-FR occurs from $u$ to $v$ if and
only if proper $(-\frac{\bar{\alpha}\beta}{\bar{\beta}}, \beta)$-FR occurs from
$v$ to $u$ \cite[Proposition 4.1]{Chan2019Quantum}, we shall often say proper FR
occurs \emph{between} $u$ and $v$. We also say the graph $G$ itself is periodic
if there is time $\tau$ such that $|H(\tau)_{u, u}| = 1$ for all vertices $u$
\cite{Godsil2012State}.

FR was introduced in \cite{Rohith2015Visualizing}
as a physical mechanism in quantum state transfer.
Unlike PST, where the quantum state is transferred entirely from one vertex to another,
FR describes a phenomenon in which only a portion of the state is transferred,
while the remaining amplitude may persist or reappear at other vertices.
In quantum physics, fractional revival arises naturally in systems such as the infinite potential well,
where the time-evolved wave function can be expressed as a superposition of translated copies
of the initial state arranged in a parity-preserving manner
\cite{Aronstein1997Fractional}.
Dooley and Spiller \cite{Dooley2014Fractional} further investigated FR in the context of qubit interactions,
relating it to multiple Schr\"odinger-cat states and quantum carpets.
In recent years, FR has developed into a topic at the interface of mathematics and physics,
particularly within spectral graph theory and algebraic combinatorics.

The mathematical theory of quantum state transfer, encompassing both PST and
FR, has developed rapidly. PST was first introduced by Christandl et al.
\cite{Christandl2004Perfect} and has since been extensively studied. Godsil \cite{Godsil2012State} provided a comprehensive survey of PST in graphs and its associated spectral conditions. Several works on PST have focused on graphs arising from groups, see for examples \cite{Cao2020Perfect, Cao2021Perfect}. PST has also been investigated in graphs derived from rings.
Integral circulant graphs were characterized spectrally in \cite{Basic2010Perfect},
while unitary Cayley graphs admit PST only in the cases $K_2$ and $C_4$
\cite{Basic2009Perfect}.
Eigenvalue methods were extended to unitary Cayley graphs over finite local
rings \cite{Meemark2014Perfect},
and later generalized to finite commutative rings and gcd-graphs
\cite{Thongsomnuk2019Perfect},
with additional refinements in \cite{Cheung2011Perfect,Kendon2011Perfect}.

Subsequent research has focused on FR.
Fractional revival in XX quantum spin chains was studied in \cite{Genest2016Quantum},
where models based on isospectral deformations and Para-Krawtchouk polynomials
were shown to admit both PST and FR.
Balanced fractional revival and its connection with quantum walks on hypercubes
were examined in \cite{Bernard2018Graph}.
Symmetry of FR between vertex pairs was established in \cite{Chan2019Quantum},
while conditions for FR in paths and cycles were analyzed in
\cite{Chan2020Fractional}.
A general framework for FR in spin networks, extending the notion of cospectral vertices,
was developed in \cite{Chan2022Fundamentals}, and it was shown that FR may occur between non-cospectral vertices in certain infinite graphs \cite{Godsil2022Fractional}.
Characterizations of FR in weighted graphs, including twin vertices and double cones,
were obtained in \cite{Monterde2023Fractional}.
Further studies investigated FR in semi-Cayley and Cayley graphs over finite
abelian groups \cite{Wang2025Fractional,Wang2024Fractional},
and in unitary Cayley graphs over $\mathbb Z_n$ \cite{Soni2025Quantum,Jitngam2026Quantum}.

Consider the ring $\mathbb{Z}_n$ of integers modulo $n$.
Recall that a nonzero element $a \in \mathbb{Z}_n$ is called a zero divisor
if there exists a nonzero element $b \in \mathbb{Z}_n$ such that $ab = 0$.
The \emph{zero-divisor graph} $\Gamma(\mathbb{Z}_n)$ is a simple undirected graph
whose vertex set consists of all nonzero zero divisors of $\mathbb{Z}_n$,
where two distinct vertices $u$ and $v$ are adjacent if and only if
$uv = 0$.

Zero-divisor graphs have been extensively studied and play an important role
in algebraic graph theory. Motivated by the preceding discussion,
we investigate PST and FR on the zero-divisor graph $\Gamma(\mathbb{Z}_n)$ in this paper.
In Section~\ref{zdg}, we present the structural properties of $\Gamma(\mathbb{Z}_n)$ and survey the graph adjacency spectrum.
A sufficient condition for $\Gamma(\mathbb{Z}_n)$ to admit PST
is established in Section~\ref{PST}.
In Section~\ref{FR}, we derive necessary and sufficient conditions for two
vertices of $\Gamma(\mathbb{Z}_n)$ to be strongly copsectral. Furthermore, we
study FR on bipartite $\Gamma(\mathbb{Z}_n)$ and prove the non-existence of
FR on $\Gamma(\mathbb{Z}_{p^2q})$.

\section{Preliminaries}\label{Pre}
Throughout this paper, unless otherwise stated, we assume that $G$ is a finite,
connected, simple, and unweighted graph with vertex set $V(G)$ and edge set $E(G)$.
We write $u \sim v$ to indicate that vertices $u$ and $v$ are adjacent, and we
denote the neighborhood of a vertex $u \in V(G)$ by $N_G(u)$. The \emph{adjacency
	matrix} $A(G)$ of $G$ has entries $A(G)_{u, v} = 1$ if $u \sim v$, and $0$ otherwise.
Because $G$ is undirected, $A(G)$ is real and symmetric; its \emph{characteristic
	polynomial} is defined as $\phi(A(G), x) = \det(xI - A(G))$, and its \emph{spectrum},
denoted $\sigma(G)$, is the multiset of its eigenvalues. For distinct eigenvalues
$\lambda_1, \dots, \lambda_k$ with multiplicities $m_1, \dots, m_k$, we write
$\sigma(G) = \{ \lambda_1^{m_1}, \dots, \lambda_k^{m_k} \}$. The matrices $J$, $I$,
and $O$ denote the all-ones, identity, and zero matrices of appropriate order,
respectively, while the characteristic vector $\mathbf{e}_u$ of $u$ has a $1$ in
the $u$-th entry and $0$ elsewhere. If $H$ is another graph, we write $G \cong H$
to indicate that $G$ and $H$ are isomorphic.

We denote the \emph{complete graph} and \emph{path graph} on $n$ vertices by
$K_n$ and $P_n$, respectively. The \emph{complete bipartite graph} with partite
sets of size $n_1$ and $n_2$ is denoted as $K_{n_1, n_2}$, with the \emph{star
	graph} $S_n \cong K_{1,{n-1}}$ as a special case. The \emph{complement}
$\overline{G}$ of $G$ has edge set $E(\overline{G}) = \{uv : u \not\sim v
	\text{ in } G\}$. The complement of $K_n$ is the \emph{null graph},
$\overline{K}_n$. A graph is \emph{regular} if all vertices have the same
degree.

The \emph{Cartesian product} $G \square H$ has vertex set $V(G) \times V(H)$,
where $(u, v) \sim (u', v')$ in $G\square H$ if either $u = u'$ and $v \sim v'$
in $H$, or $v = v'$ and $u \sim u'$ in $G$. The eigenvalues of $G \square H$ are
the sums $\lambda + \mu$, where $\lambda \in \sigma(G)$ and $\mu \in \sigma(H)$,
taking multiplicities into account.

A partition $\pi = \{V_1, V_2, \dots, V_k\}$ of $V(G)$ is \emph{equitable} if
the number of neighbors in $V_j$ of any vertex $u \in V_i$ is a constant
$q_{ij}$, independent of $u$. (Note that context will distinguish this use of
$\pi$ from the mathematical constant $\pi$.) The \emph{quotient graph} $G/\pi$
is defined as a weighted directed graph whose vertices are the cells of $\pi$,
with adjacency matrix given by $A(G/\pi)_{i, j} = q_{ij}$. The \emph{discrete
	partition} (where each cell is a singleton) is always equitable, while the
\emph{trivial partition} (consisting of a single cell) is equitable if and only
if $G$ is regular. Because the join of any two equitable partitions is equitable,
any arbitrary partition of $V(G)$ refines the unique \emph{coarsest equitable
	partition} (CEP) \cite[Section 8]{Godsil2012State}.

In a connected graph $G$, let $d(u,v)$ be the shortest path distance between
vertices $u$ and $v$, and let $\epsilon(u) = \max_{v} d(u,v)$ be the
\emph{eccentricity} of $u$. The \emph{distance partition} of $G$ with respect
to $u$ is $\Delta_u = \{D_0, \dots, D_{\epsilon(u)}\}$, where $D_i = \{v
	\in V(G) : d(u, v) = i\}$. Note that $\Delta_u$ need not be equitable.

For a partition $\pi = \{V_1, \dots, V_k\}$ of $V(G)$, its \emph{characteristic
	matrix} $S$ is the $|V(G)| \times k$ matrix whose $j$-th column is the
characteristic vector of the cell $V_j$; that is, $S_{u,j} = 1$ if $u \in V_j$
and $0$ otherwise. Normalizing these columns to unit length yields $\hat{S}$.
Consequently, $\hat{S}^T\hat{S} = I_{k}$ and $S\hat{S}^T$ is a block diagonal
matrix with $i$-th block $\frac{1}{|V_i|}J_{|V_i|}$. A vertex $u$ is a singleton
cell in $\pi$ if and only if $S\hat{S}^T\mathbf{e}_u = \mathbf{e}_u$.

\begin{lem}{\cite[Lemma 8.1]{Godsil2012State}}\label{equitable_char}
	For a partition $\pi$ with normalized characteristic matrix $\hat{S}$. The
	following statements are equivalent:
	\begin{enumerate}[label=(\alph*)]
		\item $\pi$ is an equitable partition.
		\item The column space of $\hat{S}$ is $A$-invariant.
		\item There exists a matrix $C$ of order $|\pi| \times |\pi|$ such that
		      $A\hat{S} = \hat{S}C$.
		\item $A$ and $S\hat{S}^T$ commute.
	\end{enumerate}
\end{lem}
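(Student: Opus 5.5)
I will prove the equivalences in the cycle (a)$\Rightarrow$(c)$\Rightarrow$(b)$\Rightarrow$(d)$\Rightarrow$(a). Throughout I write $D=\operatorname{diag}(|V_1|,\dots,|V_k|)$, so that $\hat S = SD^{-1/2}$, $\hat S^T\hat S=I_k$, and $P:=S\hat S^T=\hat S\hat S^T$. The last identity shows that $P$ is the orthogonal projection onto the column space of $\hat S$, which equals the column space of $S$.

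For (a)$\Rightarrow$(c), let $Q=(q_{ij})$ be the quotient matrix. The $(u,j)$ entry of $AS$ is the number of neighbours of $u$ in $V_j$. If $u\in V_i$, equitability makes this equal to $q_{ij}=(SQ)_{u,j}$, so $AS=SQ$. Multiplying on the right by $D^{-1/2}$ gives
\[
A\hat S = S Q D^{-1/2} = \hat S\, D^{1/2}QD^{-1/2},
\]
so $C=D^{1/2}QD^{-1/2}$ works. For (c)$\Rightarrow$(b), each column of $A\hat S=\hat S C$ is a combination of columns of $\hat S$, so $A$ maps $\operatorname{col}(\hat S)$ into itself.

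For (b)$\Rightarrow$(d), let $W=\operatorname{col}(\hat S)$. Since $A$ is symmetric and $W$ is $A$-invariant, $W^\perp$ is also $A$-invariant: if $x\in W^\perp$ and $w\in W$, then $\langle Ax,w\rangle=\langle x,Aw\rangle=0$. Now take any vector $y=w+x$ with $w\in W$, $x\in W^\perp$. Then $APy=Aw$, and $PAy=P(Aw+Ax)=Aw+0=Aw$. Hence $AP=PA$.

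For (d)$\Rightarrow$(a), commutation gives $APS=PAS$. Because $P$ is the projection onto $\operatorname{col}(S)$, we have $PS=S$, so $AS=PAS$. Therefore each column of $AS$ lies in $\operatorname{col}(S)$, i.e.\ it is constant on every cell $V_i$. The $j$-th column of $AS$ records, for each $u$, the number of neighbours of $u$ in $V_j$. Constancy on $V_i$ means this number depends only on $i$ and $j$, which is exactly the definition of an equitable partition.

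None of these steps poses a real obstacle. The only points needing care are the symmetry argument showing $W^\perp$ is $A$-invariant, and the translation between "column of $AS$ lies in $\operatorname{col}(S)$" and "neighbour counts are constant on cells."
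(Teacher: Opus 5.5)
The paper gives no proof of this lemma; it only cites Godsil. Your cycle (a)$\Rightarrow$(c)$\Rightarrow$(b)$\Rightarrow$(d)$\Rightarrow$(a) is the standard argument, but it rests on an identity that is false: in general $S\hat S^T\neq\hat S\hat S^T$. With $D=\operatorname{diag}(|V_1|,\dots,|V_k|)$ you have:
\begin{itemize}
\item $S\hat S^T=SD^{-1/2}S^T$, with diagonal blocks $|V_i|^{-1/2}J$;
\item $\hat S\hat S^T=SD^{-1}S^T$, with diagonal blocks $|V_i|^{-1}J$.
\end{itemize}
These coincide only when every cell is a singleton. Otherwise $S\hat S^T$ is not a projection.

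This is not a harmless slip. With the matrix literally written in (d), the implication (a)$\Rightarrow$(d) is false. Take $P_3$ with centre $c$, leaves $\ell_1,\ell_2$, and the equitable partition $\{\{c\},\{\ell_1,\ell_2\}\}$. Then $S\hat S^T=\operatorname{diag}(1,\tfrac{1}{\sqrt2}J_2)$, and $(AS\hat S^T)_{c,\ell_1}=\sqrt2$ while $(S\hat S^TA)_{c,\ell_1}=1$. More generally, for equitable $\pi$ you get $AS\hat S^T=SQD^{-1/2}S^T$ and $S\hat S^TA=SD^{-1/2}Q^TS^T$. Combined with $DQ=Q^TD$, these agree exactly when $q_{ij}=0$ whenever $|V_i|\neq|V_j|$. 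Only the implications out of (d) survive the literal reading, because $\operatorname{col}(S\hat S^T)=\operatorname{col}(S)$ and commuting with $A$ makes that column space $A$-invariant.

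So the statement as transcribed cannot be proved. The paper makes the same conflation in the sentence before the lemma, where it attributes the blocks $\frac{1}{|V_i|}J$ to $S\hat S^T$. The correct form of (d), and the one the paper actually uses later (commuting with the orthogonal projection onto cell-constant vectors), has $\hat S\hat S^T$ in place of $S\hat S^T$. You should have flagged this and stated that you were proving the corrected version, rather than asserting $S\hat S^T=\hat S\hat S^T$. With $P=\hat S\hat S^T$, every step of your argument is correct as written: $P$ is the orthogonal projection onto $\operatorname{col}(S)$ since $\hat S^T\hat S=I_k$, and $PS=SD^{-1}S^TS=S$.
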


When $\pi$ is equitable, left-multiplying the equation $A\hat{S} = \hat{S}C$
by $\hat{S}^T$ yields $C = \hat{S}^T A \hat{S}$. If we denote the standard
quotient matrix by $Q = A(G/\pi)$, then $AS = SQ$ and $C = DQD^{-1}$,
where $D = \text{diag}(\sqrt{|V_{1}|}, \dots, \sqrt{|V_{k}|})$. Thus, $C$
is a symmetrization of $Q$ via a similarity transformation, with entries
given by $C_{ij} = \sqrt{q_{ij}q_{ji}}$. We refer to the (weighted)
graph with adjacency matrix $C$ as the \emph{symmetrized quotient graph}
\cite{Fan2013Pretty}.

\section{The Zero-Divisor Graph $\Gamma(\mathbb{Z}_n)$}\label{zdg}

For an integer $n > 1$, let $Z(\mathbb{Z}_n)$ denote the set of all zero
divisors of $\mathbb{Z}_n$ and $Z^*(\mathbb{Z}_n) = Z(\mathbb{Z}_n) \setminus \{0\}$.
The zero-divisor graph $\Gamma(\mathbb{Z}_n)$ has the vertex set $V(\Gamma(\mathbb{Z}_n)) = Z^*(\mathbb{Z}_n)$, and recall that two distinct vertices $u$ and $v$ are
adjacent if and only if $uv = 0$ in $\mathbb{Z}_n$. Notably, the zero-divisor
graph of any commutative ring is connected with a diameter of at most
$3$ \cite{Anderson1999On}. This section details the structural properties and
adjacency spectrum of $\Gamma(\mathbb{Z}_n)$, which serve to characterize FR.

Following the notation in \cite{Bajaj2022On}, let $n$ be a positive integer with prime
factorization $n = p_1^{\alpha_1} p_2^{\alpha_2} \cdots p_k^{\alpha_k}$. The
total number of divisors of $n$ is $d(n) = \prod_{i=1}^{k} (\alpha_i + 1)$.
Consequently, the number of distinct proper divisors (excluding $1$ and $n$) is
$\xi = d(n) - 2$.

Because the vertices of $\Gamma(\mathbb{Z}_n)$ are precisely the nonzero zero
divisors of $\mathbb{Z}_n$, the order of the graph is:
\[
	|V(\Gamma(\mathbb{Z}_n))| = n - \varphi(n) - 1
	= \sum_{i=1}^{\xi} \varphi\left(\frac{n}{d_i}\right),
\]
where $d_1, d_2, \dots, d_{\xi}$ are the distinct proper divisors of $n$, and
$\varphi$ denotes the \emph{Euler totient function}. Since every nonzero element
of $\mathbb{Z}_{p}$ is a unit for any prime $p$, the vertex set
$V(\Gamma(\mathbb{Z}_p))$ is empty. Accordingly, we restrict our study to
composite integers $n \ge 4$ throughout this paper.

The architecture of $\Gamma(\mathbb{Z}_n)$ is determined by its divisor structure. We partition $V(\Gamma(\mathbb{Z}_n))$ into disjoint cells based on the greatest common divisor of each element with $n$. Specifically, for each proper divisor $d_i$ of $n$, we define:
\[
	V_{d_i} = \{ x \in \mathbb{Z}_n : \gcd(x, n) = d_i \}.
\]

\begin{figure}[ht]
	\centering
	\begin{subfigure}[b]{0.45\textwidth}
		\centering
		\includegraphics[width=\textwidth]{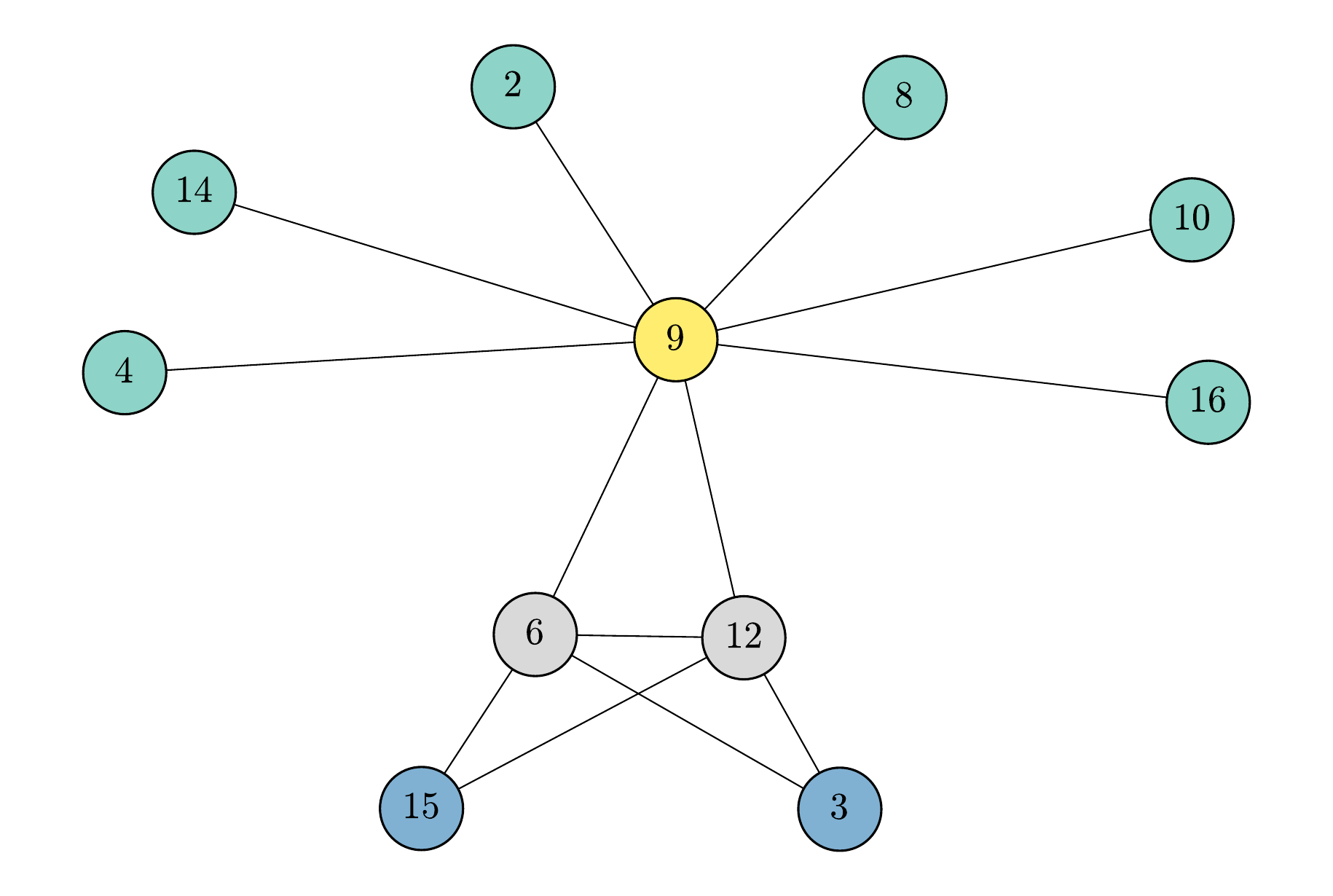}
		\caption{The zero-divisor graph $\Gamma(\mathbb{Z}_{18})$}
		\label{gamma_z18}
	\end{subfigure}
	\hfill
	\begin{subfigure}[b]{0.45\textwidth}
		\centering
		\includegraphics[width=\textwidth]{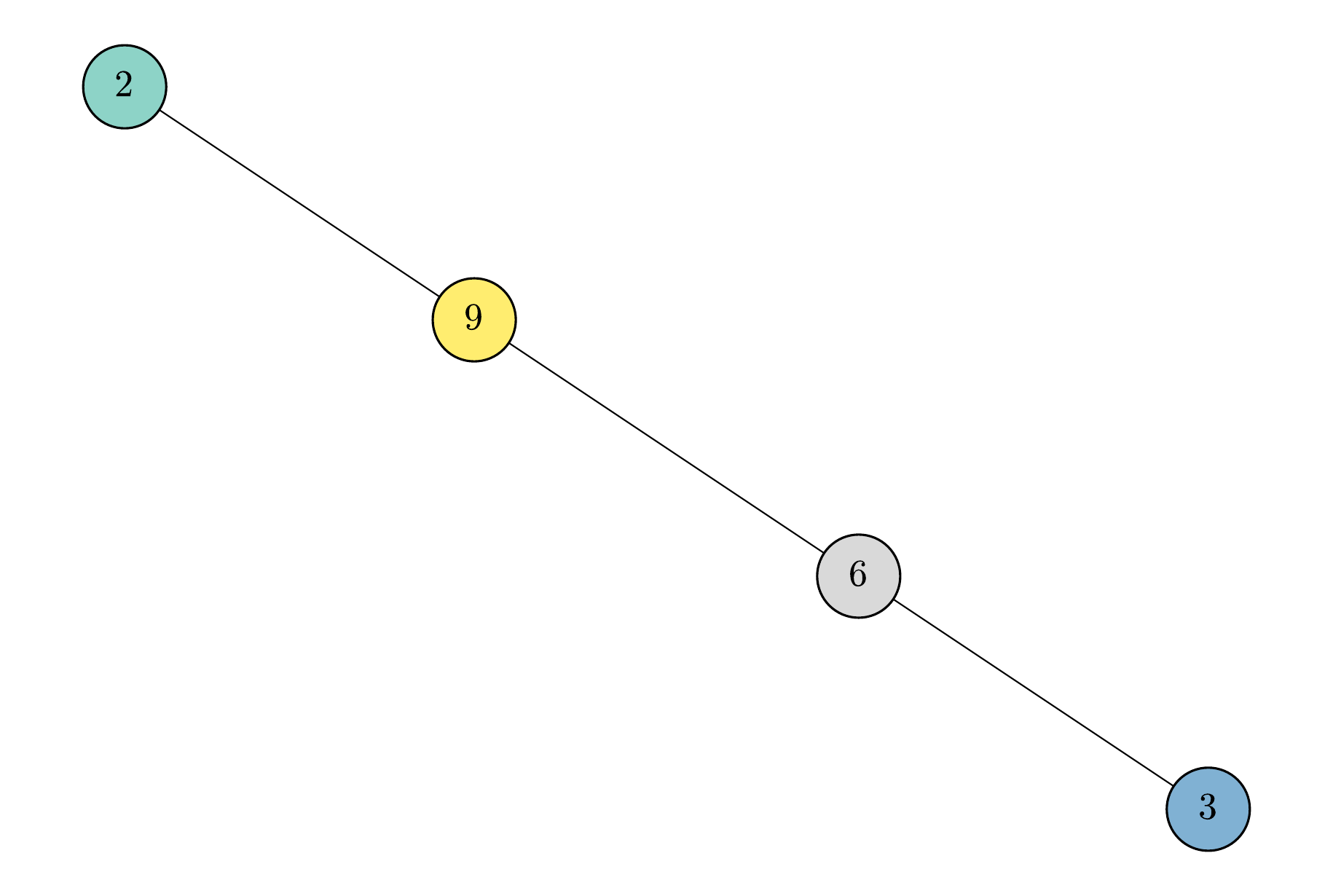}
		\caption{The simple graph $\Upsilon_{18}$}
		\label{upsilon_z18}
	\end{subfigure}
	\caption{The zero-divisor graph of $\mathbb{Z}_{18}$ and its associated divisor graph.}
	\label{combined_z18}
\end{figure}

\noindent Since $\gcd(x, n) = d_i$ if and only if $\gcd(x/d_i, n/d_i) = 1$, the
cardinality of each cell is $|V_{d_i}| = \varphi\left(n/d_i\right)$. For distinct
cells $V_{d_i}$ and $V_{d_j}$, a vertex $u \in V_{d_i}$ is adjacent to $v \in
	V_{d_j}$ if and only if $n \mid d_i d_j$. When this holds, every vertex in
$V_{d_i}$ is adjacent to every vertex in $V_{d_j}$. Furthermore, the
\textit{induced subgraph} $\Gamma(V_{d_i})$ is either a complete graph
$K_{|V_{d_i}|}$ if $n \mid d_i^2$, or a null graph $\overline{K}_{|V_{d_i}|}$
otherwise \cite[Corollary 2.5]{Chattopadhyay2020Laplacian} (see \cref{gamma_z18}).

Let $\mathcal{D} = \{d_1, d_2, \dots, d_{\xi}\}$ be the set of distinct proper
divisors of $n$, and let $\pi_{\mathcal{D}} = \{V_{d_1}, V_{d_2}, \dots,
	V_{d_{\xi}}\}$ be the corresponding vertex partition of $\Gamma(\mathbb{Z}_n)$.
We define a simple graph $\Upsilon_n$ on the vertex set $\mathcal{D}$, where
distinct vertices $d_i, d_j \in \mathcal{D}$ are adjacent if and only if $n \mid
	d_id_j$. This construction yields a decomposition of $\Gamma(\mathbb{Z}_n)$ via
the \emph{generalized join graph}. Recall that for a graph $G$ and a collection
of disjoint graphs $\{H_i\}$, the generalized join graph $G[H_1, \dots, H_k]$ is
formed by replacing each vertex $v_i \in V(G)$ with $H_i$, and joining all
vertices of $H_i$ to all vertices of $H_j$ whenever $v_i \sim v_j$ in $G$
\cite[Definition 2.1]{Chattopadhyay2020Laplacian}. Accordingly,
$\Gamma(\mathbb{Z}_n)$ satisfies the isomorphism:
\begin{equation}\label{zdg_iso}
	\Gamma(\mathbb{Z}_n) \cong \Upsilon_n[\Gamma(V_{d_1}), \Gamma(V_{d_2}),
		\dots, \Gamma(V_{d_{\xi}})].
\end{equation}

\noindent Using \cref{zdg_iso}, Bajaj et al. \cite{Bajaj2022On} derived the
adjacency spectrum of $\Gamma(\mathbb{Z}_n)$. Recall that each induced subgraph
$\Gamma(V_{d_t})$ is a regular graph with degree $r_t = \varphi(n/d_t) - 1$ if
it is a complete graph, and $r_t = 0$ if it is a null graph. Since these induced
subgraphs are strictly complete or null, the spectrum of $\Gamma(\mathbb{Z}_n)$
consists of $0$ and $-1$ (each possibly with multiplicity zero), togerther with
the eigenvalues of the $\xi \times \xi$ real symmetric matrix $C(\Upsilon_n)$,
whose entries $c_{ij}$ are defined as:
\begin{equation}\label{sym_quot_matrix}
	c_{ij} = \begin{cases}
		r_i, & \text{if } i = j,                                 \\[.5em]
		\sqrt{\varphi\left(\displaystyle\frac{n}{d_i}\right)\varphi\left(\displaystyle\frac{n}{d_j}\right)},
		     & \text{if } d_i \sim d_j \text{ in } \Upsilon_{n}, \\[.5em]
		0,   & \text{otherwise}.
	\end{cases}
\end{equation}

\begin{thm}{\cite[Theorem 3.3]{Bajaj2022On}} \label{spectrum1}
	Let $d_1, d_2, \dots, d_{\xi}$ be the distinct proper divisors of a positive
	integer $n$. Suppose that the vertex partition $\pi_{\mathcal{D}} =
		\{V_{d_i}\}_{i=1}^{\xi}$ is ordered such that the first $t$ cells (where
	$0 \le t \le \xi$) induce null graphs, and the remaining cells induce
	complete graphs in $\Gamma(\mathbb{Z}_n)$. Then the adjacency spectrum is
	given by:
	\[
		\sigma(\Gamma(\mathbb{Z}_n)) = \{0^{m_1}, -1^{m_2}\}
		\cup \sigma(C(\Upsilon_n)),
	\]
	where the multiplicities $m_1$ and $m_2$ are:
	\[
		m_1 = \sum_{i=1}^{t} \varphi\left(\frac{n}{d_i}\right) -
		t \quad \text{and} \quad m_2 = \sum_{i=t+1}^{\xi}
		\varphi\left(\frac{n}{d_i}\right) + t - \xi,
	\]
	with the convention that $m_1 = 0$ if $t = 0$.
\end{thm}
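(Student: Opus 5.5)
We will prove the statement by splitting $\mathbb{R}^{V}$, where $V=V(\Gamma(\mathbb{Z}_n))$, into two $A$-invariant orthogonal pieces. The first is the column space of the normalized characteristic matrix $\hat{S}$ of $\pi_{\mathcal{D}}$. The second is its orthogonal complement, which consists of vectors summing to zero on each cell. Set $A=A(\Gamma(\mathbb{Z}_n))$.

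\textbf{Step 1: $\pi_{\mathcal{D}}$ is equitable with the right quotient.} Take $u\in V_{d_i}$. By the structure described above, $u$ has
\begin{itemize}
\item exactly $r_i$ neighbours in its own cell $V_{d_i}$, since the cell induces either $K_{\varphi(n/d_i)}$ or $\overline{K}_{\varphi(n/d_i)}$;
\item either all $\varphi(n/d_j)$ or none of the vertices of $V_{d_j}$ as neighbours, for $j\ne i$, according to whether $d_i\sim d_j$ in $\Upsilon_n$.
\end{itemize}
Hence the partition is equitable, with quotient entries $q_{ii}=r_i$ and $q_{ij}=\varphi(n/d_j)$ or $0$. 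By Lemma~\ref{equitable_char} and the remarks following it, $A\hat S=\hat S C$, where $C_{ij}=\sqrt{q_{ij}q_{ji}}$. This $C$ is exactly $C(\Upsilon_n)$ from \eqref{sym_quot_matrix}. Since $\hat S^T\hat S=I_\xi$, the restriction of $A$ to $\operatorname{col}(\hat S)$ is represented by $C(\Upsilon_n)$, which contributes $\sigma(C(\Upsilon_n))$ with multiplicities.

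\textbf{Step 2: the orthogonal complement.} Take $x$ supported on a single cell $V_{d_i}$ with $\sum_{w\in V_{d_i}}x_w=0$.
\begin{itemize}
\item For a vertex $v$ outside $V_{d_i}$: $v$ is adjacent to all of $V_{d_i}$ or to none of it, so $(Ax)_v$ is either $\sum_w x_w=0$ or $0$.
\item For a vertex $v$ inside $V_{d_i}$: if the cell is null, $(Ax)_v=0$; if it is complete, $(Ax)_v=\sum_{w\ne v}x_w=-x_v$.
\end{itemize}
So $Ax=0$ for null cells and $Ax=-x$ for complete cells. Each cell contributes a zero-sum subspace of dimension $\varphi(n/d_i)-1$. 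Summing over cells, these subspaces span $\operatorname{col}(\hat S)^\perp$.

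\textbf{Step 3: count multiplicities.} The eigenvalue $0$ gains
\[
m_1=\sum_{i\le t}\bigl(\varphi(n/d_i)-1\bigr)=\sum_{i\le t}\varphi(n/d_i)-t,
\]
and the eigenvalue $-1$ gains
\[
m_2=\sum_{i>t}\bigl(\varphi(n/d_i)-1\bigr)=\sum_{i>t}\varphi(n/d_i)+t-\xi.
\]
The dimensions add up to $\xi+\sum_i(\varphi(n/d_i)-1)=|V|$, so this accounts for the full spectrum. When $t=0$ the first sum is empty and $m_1=0$, matching the stated convention.

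\textbf{Main obstacle.} The only delicate point is Step 1: confirming that the symmetrization $C=DQD^{-1}$ has off-diagonal entries $\sqrt{\varphi(n/d_i)\varphi(n/d_j)}$. This holds because $q_{ij}=\varphi(n/d_j)$ and $q_{ji}=\varphi(n/d_i)$ are either both nonzero or both zero, as adjacency in $\Upsilon_n$ is symmetric. Singleton cells, where $\varphi(n/d_i)=1$, need no special treatment: they contribute no vectors to the complement, and $r_i=0$ whether the cell is regarded as $K_1$ or as $\overline{K}_1$.
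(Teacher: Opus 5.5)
Your proof is correct. The paper itself gives no proof: it quotes Bajaj--Panigrahi, whose argument goes through the generalized join decomposition \eqref{zdg_iso}. It then applies the general formula for the spectrum of a generalized join $H[G_1,\dots,G_k]$ of regular graphs $G_i$ of degree $r_i$. That spectrum is the union, over $i$, of $\sigma(G_i)$ with one copy of $r_i$ removed, together with $\sigma(C)$, where $C$ is exactly the matrix in \eqref{sym_quot_matrix}. Substituting $\sigma(\overline{K}_m)=\{0^m\}$ and $\sigma(K_m)=\{m-1,(-1)^{m-1}\}$ then gives $m_1$ and $m_2$.

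You instead prove the needed special case from scratch. The equitable partition $\pi_{\mathcal D}$ gives an $A$-invariant orthogonal splitting $\operatorname{col}(\hat S)\oplus\operatorname{col}(\hat S)^\perp$, and $A$ acts as $C(\Upsilon_n)$ on the first summand. Every cellwise zero-sum vector is an eigenvector for $0$ or $-1$, because each cell is null or complete and is joined to every other cell all-or-nothing.

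The citation route is shorter and handles arbitrary regular blocks, but it only yields the characteristic polynomial. Your route is self-contained and identifies the relevant eigenvectors explicitly. The $0$- and $-1$-eigenvectors lying in $\operatorname{col}(\hat S)^\perp$ are exactly the zero-sum vectors on null cells and on complete cells, respectively. This is precisely the structure the paper later exploits in \cref{equitable_parallel}, \cref{null-parallel} and \cref{complete-parallel} through the decomposition $\mathbf e_u=\mathbf m+\mathbf z$. Your splitting also makes clear that the union in the statement is a multiset union, so multiplicities add if $0$ or $-1$ also lies in $\sigma(C(\Upsilon_n))$. Your remark that the singleton cell $V_{n/2}$ contributes nothing to either count, whichever way it is classified, is also correct.
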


As the partition $\pi_{\mathcal{D}} = \{V_{d_i}\}_{i=1}^{\xi}$ is equitable, the
matrix $C(\Upsilon_n)$ is precisely the symmetrized quotient matrix
$C = \hat{S}^TA\hat{S}$ from \cref{equitable_char}. Moreover, for $n\neq 4$, all the eigenvalues
of $C(\Upsilon_n)$ are nonzero \cite[Theorem 3.5]{Bajaj2022On} (note that for
$n=4$, $C(\Upsilon_4)$ is the zero matrix, yielding an eigenvalue of $0$).

\begin{ex}
	Consider $\Gamma(\mathbb{Z}_{18})$ from \cref{combined_z18}. The proper
	divisors are $\mathcal{D} = \{2, 9, 6, 3\}$, giving $\xi = 4$. The cell
	cardinalities are $|V_2|=6$, $|V_9|=1$, $|V_6|=2$, and $|V_3|=2$. The cells $V_2, V_3$, and $V_9$ induce null graphs ($t=3$), while $V_6$ induces a complete graph.

	Applying \cref{spectrum1}, the multiplicities are $m_1 = 6$ and $m_2 = 1$.
	Evaluating \cref{sym_quot_matrix} yields the $4 \times 4$ symmetrized
	quotient matrix:
	$$
		C(\Upsilon_{18}) = \begin{pmatrix}
			0        & \sqrt{6} & 0        & 0 \\
			\sqrt{6} & 0        & \sqrt{2} & 0 \\
			0        & \sqrt{2} & 1        & 2 \\
			0        & 0        & 2        & 0
		\end{pmatrix}.
	$$
	Consequently, $\sigma(\Gamma(\mathbb{Z}_{18})) = \{0^{6}, -1^{1}\} \cup
		\sigma(C(\Upsilon_{18}))$.
\end{ex}

The structured eigenvalues of $\Gamma(\mathbb{Z}_n)$ come directly from the
presence of twin vertices within its cells. Two distinct vertices $u$ and $v$
in a graph $G$ are \emph{twins} if $N_G(u) \setminus \{v\} = N_G(v) \setminus
	\{u\}$. They are called \emph{true twins} if they are adjacent, and
\emph{false twins} otherwise. A subset  $T \subseteq V(G)$ with at least two
vertices is a \emph{set of twins} in $G$ if the vertices in $T$ are pairwise
twins.

Consider $G$ as a weighted graph where $\omega$ denotes the loop weight and
$\eta$ denotes the edge weight between any two vertices in $T$. By
\cite[Lemma 1]{Monterde2023Fractional}, if $u, v \in T$, then $\mathbf{e}_u -
	\mathbf{e}_v$ is an eigenvector of the adjacency matrix $A(G)$ with
corresponding eigenvalue $\theta = \omega - \eta$.

To apply this result to the cells of $\pi_{\mathcal{D}}$, we note that twin
vertices in $\Gamma(\mathbb{Z}_n)$ must necessarily belong to the same cell.
Indeed, if $u \in V_{d_1}$ and $v \in V_{d_2}$ for $d_1 \neq d_2$, the proof of
\cref{cell-vs-gcd} implies that $u$ and $v$ have distinct degrees and,
consequently, cannot be twins. Conversely, if $u$ and $v$ belong to the same
cell $V_d$ of $\pi_{\mathcal{D}}$, they share the same neighborhood, making them
twins. Thus, we have the following characterization:

\begin{lem}\label{cell-vs-twins}
Let $u$ and $v$ be distinct vertices of $\Gamma(\mathbb{Z}_n)$. Then $u$ and $v$
are twins if and only if they belong to the same cell of the equitable
partition $\pi_{\mathcal{D}}$.
\end{lem}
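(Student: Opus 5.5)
We have to prove both directions: vertices in a common cell are twins, and vertices in different cells are not.

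\emph{Same cell implies twins.} Let $u,v\in V_d$ with $u\neq v$. A vertex $w\notin\{u,v\}$ is adjacent to $u$ exactly when $uw=0$ in $\mathbb{Z}_n$, that is, when $n\mid uw$. Since $\gcd(u,n)=d$, this holds exactly when $n\mid d\cdot\gcd(w,n)$. The last condition depends only on $d$ and on the cell of $w$, not on $u$ itself. So $u$ and $v$ have the same neighbors outside $\{u,v\}$, which gives $N(u)\setminus\{v\}=N(v)\setminus\{u\}$.

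\emph{Different cells implies not twins.} Let $u\in V_{d_1}$ and $v\in V_{d_2}$ with $d_1\neq d_2$. Rather than compare degrees, I will exhibit a vertex $w\notin\{u,v\}$ that is adjacent to exactly one of $u,v$.

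Since $d_1\neq d_2$, we may assume $d_1\nmid d_2$; otherwise $d_1$ is a proper divisor of $d_2$, so $d_2\nmid d_1$, and we swap the roles. Take $w=n/d_2$, which is a proper divisor of $n$ and hence a vertex lying in the cell $V_{n/d_2}$.
\begin{itemize}
\item $w$ is adjacent to $v$: we have $n\mid d_2\cdot(n/d_2)=n$, so $vw=0$.
\item $w$ is not adjacent to $u$: adjacency would require $n\mid d_1\cdot(n/d_2)$, i.e.\ $d_2\mid d_1$.
\end{itemize}
So the argument needs the orientation $d_2\nmid d_1$, and I choose it that way: if $d_2\nmid d_1$, use $w=n/d_2$ as above; otherwise $d_2\mid d_1$ with $d_2\neq d_1$, so $d_1\nmid d_2$, and I use $w=n/d_1$ with the roles of $u$ and $v$ exchanged.

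\emph{The main obstacle} is ensuring $w\notin\{u,v\}$, since twin-ness only compares neighborhoods after removing $u$ and $v$.
\begin{itemize}
\item $w\neq u$: the cell of $w$ is $V_{n/d_2}$, so $w=u$ would force $n/d_2=d_1$, i.e.\ $d_1d_2=n$. But then $d_2\mid d_1$ would give $d_2^2\mid n$, which does not by itself exclude this. I handle it directly: if $w=u$, then $uw=u^2$ and $u$ adjacent to itself is meaningless. Instead, $w=u$ means the neighbor in question is $u$ itself, and we ask whether $u\sim v$. Since $d_1d_2=n$, we get $u\sim v$, while the cell of $w$ is not adjacent to $u$'s cell. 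In that case I pick a different element $w'$ of $V_{n/d_2}$ with $w'\neq u$, if the cell has size at least $2$.
\item $w\neq v$: if $w=v$, then $n/d_2=d_2$, so $d_2^2=n$.
\item Remaining tiny cases: when the chosen cell is a singleton equal to $u$ or $v$, I fall back on showing $\deg u\neq\deg v$ via the formula $\deg x=\sum_{n\mid d\,d'}\varphi(n/d')$ minus the loop term, or verify these few configurations directly.
\end{itemize}
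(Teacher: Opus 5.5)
\textbf{Verdict: gap, though a small one.} Your same-cell direction is correct, and the witness strategy for the converse is sound. As written, though, the converse is not complete. The case you push into your last bullet is not a handful of small configurations but an infinite family, and it is exactly where your orientation rule breaks.

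\textbf{The unhandled family.} Take $n\equiv 2\pmod 4$ with $n\ge 6$, $u=n/2$ and $v\in V_2$. For example, $\Gamma(\mathbb{Z}_{2q})\cong K_{1,q-1}$ for an odd prime $q$, with $u$ the center and $v$ a leaf.
\begin{enumerate}
\item Here $d_1=n/2$ is odd, so $d_2=2\nmid d_1$, and your rule picks $w=n/d_2=n/2=u$.
\item The witness cell $V_{n/2}$ has $\varphi(2)=1$ element, so there is nothing to replace $w$ with.
\end{enumerate}
Saying you would fall back on degrees ``or verify directly'' leaves precisely this step unproved. It is easy to close in either of two ways.
\begin{enumerate}
\item Degrees: $\deg u=n/2-1\ge 2$, whereas $\deg v=1$, since the only nonzero multiple of $n/2$ in $\mathbb{Z}_n$ is $n/2$ itself.
\item Swap roles: $d_1=n/2\nmid 2=d_2$ also holds, so take any $w'\in V_2\setminus\{v\}$. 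This set is nonempty because $|V_2|=\varphi(n/2)\ge 2$. Such a $w'$ is adjacent to $u$ but not to $v$, because $n\nmid 4$.
\end{enumerate}

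\textbf{The $w\neq v$ bullet.} It only records that a collision forces $n=d_2^2$ and then stops. You must add that the witness cell is then $V_{d_2}$ itself, of size $\varphi(d_2)\ge 2$ unless $n=4$ (a one-vertex graph), so another element of it works.

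\textbf{A clean way to organize the converse.} Assume $d_2\nmid d_1$.
\begin{enumerate}
\item Adjacency between distinct vertices depends only on their cells, so every element of $V_{n/d_2}\setminus\{u,v\}$ is a witness.
\item That cell contains at most one of $u,v$, since $d_1\neq d_2$.
\item Hence it can fail to supply a witness only when $\varphi(d_2)=1$, i.e.\ $d_2=2$, and its unique element $n/2$ equals $u$ or $v$.
\item If it equals $v$, then $n=4$. If it equals $u$, you are in the family above.
\end{enumerate}
The middle of your $w\neq u$ bullet should be deleted. Whether $u\sim v$ is irrelevant, since twins may be adjacent. The remark about $d_2\mid d_1$ contradicts the orientation you fixed.

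\textbf{Comparison with the paper.} The paper uses no witnesses. It uses $\deg x=d-1$ or $d-2$, according as $n\nmid d^2$ or $n\mid d^2$, where $d=\gcd(x,n)$. The proof of \cref{cell-vs-gcd} then shows that vertices in different cells have different degrees: equality would force $d_2=d_1+1$, $n\mid d_2^2$ and $d_1d_2\mid n$, hence $d_1=1$. Since twins always have equal degree, the converse follows with no case analysis. Your route gives more information, namely an explicit vertex separating $u$ from $v$, but only once the collision cases are handled. Since your fallback is the degree count anyway, the paper's uniform degree argument is the shorter way to finish.
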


In our case, since  $\Gamma(\mathbb{Z}_n)$ is a simple graph, $\omega = 0$.
Furthermore, for any cell $V_{d_i}$ of size at least $2$, its vertices form a
set of twins where $\mathbf{e}_u - \mathbf{e}_v$ is an eigenvector. The
corresponding eigenvalue is $\theta = -1$ if $n \mid d_i^2$ (true twins,
$\eta =1$), and $\theta = 0$ otherwise (false twins, $\eta = 0$).

\section{Perfect state transfer}\label{PST}

We investigate perfect state transfer on $\Gamma(\mathbb{Z}_n)$ using the theory
of equitable partitions. By \cite[Corollary 2]{Monterde2023Fractional}, provided that
$|V(G)| \ge 3$, proper FR (and thus PST) cannot occur within twin sets of size $3$
or greater, nor between a twin and a non-twin. Because each cell $V_{d_i}$ of size
at least $2$ forms a set of twins, these general properties immediately rule out
PST between distinct cells, and within any cell where $|V_{d_i}| = \varphi(n/d_i) \ge 3$.

While these twin bounds effectively filter out non-candidate pairs, the theory
of equitable partitions provides a framework that independently recovers these
non-existence results and more importantly, establish conditions under which PST
occurs. We begin by showing that vertices grouped within the same cell of any
equitable partition of $\Gamma(\mathbb{Z}_n)$ must share the same greatest
common divisor with $n$.

\begin{lem}\label{cell-vs-gcd}
	Let $\Gamma(\mathbb{Z}_n)$ be the zero-divisor graph of $\mathbb{Z}_n$, and
	let $\pi$ be any equitable partition of its vertex set. If $u$ and $v$ are
	vertices belonging to the same cell of $\pi$, then $\gcd(u, n) = \gcd(v, n)$.
\end{lem}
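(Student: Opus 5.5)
The plan is to show that the degree of a vertex of $\Gamma(\mathbb{Z}_n)$ determines its greatest common divisor with $n$, and then to use the fact that all vertices in a single cell of an equitable partition have the same degree. The second fact is immediate. If $\pi=\{W_1,\dots,W_k\}$ is equitable and $u\in W_i$, then $\deg u=\sum_j q_{ij}$, which depends only on $i$. So if $u,v$ lie in a common cell of $\pi$, then $\deg u=\deg v$. It therefore suffices to prove that $x\mapsto \gcd(x,n)$ is a function of $\deg x$.

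Next I compute the degree explicitly. Let $x\in V_d$, so $\gcd(x,n)=d$. For $y\in\mathbb{Z}_n$, the condition $xy\equiv 0 \pmod n$ is equivalent to $\frac{n}{d}\mid y$. There are exactly $d$ such residues, including $0$, so $d-1$ of them are nonzero. Every nonzero multiple of $n/d$ is automatically a zero divisor and hence a vertex. The vertex $x$ itself is among these multiples precisely when $x^2=0$, that is, when $n\mid d^2$; this matches the description of $\Gamma(V_d)$ as complete or null. Since $\Gamma(\mathbb{Z}_n)$ has no loops, we get
\[
\deg x=\begin{cases} d-2, & n\mid d^2,\\ d-1, & n\nmid d^2.\end{cases}
\]

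The main step is to check that this degree formula is injective on proper divisors. Suppose $d\neq d'$ are proper divisors with equal degrees. Within a single case the degree map is $d\mapsto d-1$ or $d\mapsto d-2$, which is injective. So the only possible collision is $d-1=d'-2$, that is, $d'=d+1$, with $n\mid (d+1)^2$ and $n\nmid d^2$.

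I expect this coprimality argument to be the only real obstacle. Since $d$ and $d+1$ are coprime and both divide $n$, we have $d(d+1)\mid n$. Combined with $n\mid (d+1)^2$, this gives $d(d+1)\mid (d+1)^2$, hence $d\mid d+1$, hence $d=1$. But $d=1$ is not a proper divisor, a contradiction.

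Hence vertices with equal degree have equal $\gcd$ with $n$, which proves the lemma. As a byproduct, this also justifies the earlier remark that vertices in distinct cells $V_{d_1}\neq V_{d_2}$ have distinct degrees.
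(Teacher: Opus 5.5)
Your proposal is correct and follows essentially the same route as the paper. Both arguments use equal degrees within a cell, the degree formula ($d-2$ if $n\mid d^2$, else $d-1$), and the reduction of any collision to $d'=d+1$ with $n\mid (d+1)^2$, which the coprimality argument $d(d+1)\mid (d+1)^2$ rules out by forcing $d=1$.
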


\begin{proof}
	Let $u, v \in V(\Gamma(\mathbb{Z}_n))$ belong to the same cell of an equitable
	partition $\pi$. By definition, $u$ and $v$ must have the same degree. For
	any vertex $x \in V(\Gamma(\mathbb{Z}_n))$ with $d = \gcd(x, n)$, its degree
	is determined by the number of nonzero multiples of $\frac{n}{d}$ in
	$\mathbb{Z}_n$. There are exactly $d - 1$ such multiples. To avoid self-loops
	in the simple graph $\Gamma(\mathbb{Z}_n)$, we must subtract $1$ from this
	count if $x^2 \equiv 0 \pmod n$, a condition equivalent to $n \mid d^2$.
	Thus, the degree of $x$ is given by:
	\[
		\deg(x) = \begin{cases} d - 1 & \text{if } n \nmid d^2 \\ d - 2 &
              \text{if } n \mid d^2.\end{cases}
	\]

	Suppose, for the sake of contradiction, that $\gcd(u,n) \neq \gcd(v,n)$.
	Let $d_1 = \gcd(u, n)$ and $d_2 = \gcd(v, n)$, and assume without loss of
	generality that $d_1 < d_2$. From the piecewise function above, the maximum
	possible degree of $u$ is $d_1 - 1$, and the minimum possible degree of
	$v$ is $d_2 - 2$. For $\deg(u) = \deg(v)$ to hold, it is necessary that:
	\[
		d_2 - 2 \le d_1 - 1 \implies d_2 \le d_1 + 1.
	\]

	Since $d_1 < d_2$, we must have $d_2 = d_1 + 1$. This strict equality forces
	$\deg(u) = d_1 - 1$ and $\deg(v) = d_2 - 2$, which in turn requires $n \nmid
		d_1^2$ and $n \mid d_2^2$. Note that consecutive integers are coprime, so
	$\gcd(d_1, d_2) = 1$. Since $d_1 \mid n$ and $d_2 \mid n$, their coprimality
	ensures that $d_1 d_2 \mid n$. Substituting $d_2 = d_1 + 1$ yields:
	\[
		d_1(d_1 + 1) \mid n.
	\]

	Furthermore, since $n \mid d_2^2$, it follows that $n \mid (d_1 + 1)^2$. By
	the transitivity of divisibility:
	\[
		d_1(d_1 + 1) \mid (d_1 + 1)^2.
	\]

	Canceling the nonzero factor $(d_1 + 1)$ implies $d_1 \mid (d_1 + 1)$, and
	consequently, $d_1 \mid 1$. This forces $d_1 = 1$, contradicting the
	requirement that any zero divisor $u$ in $\Gamma(\mathbb{Z}_n)$ must satisfy
	$\gcd(u, n) > 1$. This is a contradiction, and we conclude that $\gcd(u, n)
		= \gcd(v, n)$.
\end{proof}

In the proof of \cite[Corollary 9.3]{Godsil2012State}, Godsil showed that if
$\pi_u$ and $\pi_v$ are the CEPs refining the initial partitions $\{\{u\}, V(G)
	\setminus \{u\}\}$ and $\{\{v\}, V(G) \setminus \{v\}\}$ respectively, then the
existence of PST between $u$ and $v$ implies that $\pi_u = \pi_v$. This follows
by observing that the transition matrix $H(t)$ commutes with the orthogonal
projection onto the subspace of functions constant on the cells of $\pi_u$.
Consequently, PST from $u$ to $v$ forces $v$ to be a singleton cell in $\pi_u$,
and by symmetry, the two partitions must coincide. Utilizing this fact, our next
theorems establish conditions under which a vertex pair $\{u, v\}$ in
$\Gamma(\mathbb{Z}_n)$ fails to admit PST.

\begin{thm}\label{no-pst-1}
	Suppose that $|V(\Gamma(\mathbb{Z}_n))| > 2$, and let $u$ and $v$ be two
	vertices of \,$\Gamma(\mathbb{Z}_n)$ such that $\gcd{(u, n)} \neq \gcd{(v, n)}$.
	Then perfect state transfer cannot occur between $u$ and $v$.
\end{thm}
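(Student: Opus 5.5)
We argue by contradiction, following the strategy the paper describes just before the statement. Suppose perfect state transfer occurs from $u$ to $v$ at some time $\tau$, where $d_u=\gcd(u,n)\neq d_v=\gcd(v,n)$. Let $\pi_u$ be the coarsest equitable partition refining $\{\{u\},V\setminus\{u\}\}$, with normalized characteristic matrix $\hat S$. By \cref{equitable_char}, $A$ commutes with $P=S\hat S^T$. Hence $H(\tau)=\exp(i\tau A)$ commutes with $P$ as well. Since $u$ is a singleton cell, $P\mathbf{e}_u=\mathbf{e}_u$. Perfect state transfer means $H(\tau)\mathbf{e}_u=\gamma\mathbf{e}_v$ with $|\gamma|=1$. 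Therefore
\[
P\mathbf{e}_v=\gamma^{-1}PH(\tau)\mathbf{e}_u=\gamma^{-1}H(\tau)P\mathbf{e}_u=\mathbf{e}_v,
\]
so $v$ is also a singleton cell of $\pi_u$. By symmetry $u$ is a singleton of $\pi_v$, and as in Godsil's argument $\pi_u=\pi_v$. So far we only know that both $u$ and $v$ are singletons in a common equitable partition, and \cref{cell-vs-gcd} says nothing about singletons. A further ingredient is needed.

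The plan is to use twins. If $|V_{d_u}|\ge 2$, pick $w\in V_{d_u}$ with $w\neq u$. By \cref{cell-vs-twins}, $u$ and $w$ are twins, so $\mathbf{e}_u-\mathbf{e}_w$ is an eigenvector of $A$ with eigenvalue $\theta\in\{0,-1\}$. This gives
\[
H(\tau)(\mathbf{e}_u-\mathbf{e}_w)=e^{i\tau\theta}(\mathbf{e}_u-\mathbf{e}_w).
\]
Taking the $v$-entry, with $v\notin\{u,w\}$ (since $v\notin V_{d_u}$), yields $H(\tau)_{v,u}=H(\tau)_{v,w}$. But $H(\tau)_{v,u}=\gamma$ has modulus $1$, and unitarity forces the other entries of row $v$ to vanish. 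In particular $H(\tau)_{v,w}=0$, a contradiction. By the same argument, $|V_{d_v}|\ge2$ is also impossible. This is the content of \cite[Corollary 2]{Monterde2023Fractional}.

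So the remaining case, which I expect to be the main obstacle, is $|V_{d_u}|=|V_{d_v}|=1$. That is, $\varphi(n/d_u)=\varphi(n/d_v)=1$, so $n/d_u,n/d_v\in\{1,2\}$. Since both divisors are proper and distinct, this is impossible: at most one proper divisor, namely $n/2$, has a singleton cell. Hence at least one of $u,v$ lies in a cell of size at least $2$, and the twin argument above applies. The hypothesis $|V|>2$ excludes the degenerate case $\Gamma(\mathbb{Z}_4)\cong K_1$ and similar tiny graphs in which the twin eigenvector argument would not apply. I would verify this small case separately.
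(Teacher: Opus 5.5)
Your proof is correct, but the step that actually closes it is not the one the paper uses. The paper stays with Godsil's criterion. By \cref{cell-vs-gcd}, every equitable partition refines $\pi_{\mathcal{D}}$, so $\pi_u=\{\{u\},V_{d_u}\setminus\{u\}\}\cup\{V_d : d\neq d_u\}$; this partition is equitable because the cell-mates of $u$ are its twins. The same holds for $\pi_v$, and the two partitions differ.

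Your remark that a further ingredient is needed after your first paragraph is therefore not right. That partition is equitable and refines $\{\{u\},V\setminus\{u\}\}$, so it refines the coarsest such partition $\pi_u$. Hence the cell of $\pi_u$ containing $v$ contains all of $V_{d_v}$. Your conclusion that $v$ is a singleton of $\pi_u$ then forces $|V_{d_v}|=1$, and symmetrically $|V_{d_u}|=1$. Your own count ($\varphi(n/d)=1$ only for $d=n/2$) finishes the proof. That count is exactly what the paper leaves implicit when it asserts $\pi_u\neq\pi_v$.

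What you do instead is discard the partition argument, so your first paragraph ends up unused. You work directly with the twin eigenvector $\mathbf{e}_u-\mathbf{e}_w$ and the unitarity of $H(\tau)$. In the case $|V_{d_v}|\ge 2$ you also use its symmetry $H(\tau)_{u,v}=H(\tau)_{v,u}$. This amounts to a direct proof of \cite[Corollary 2]{Monterde2023Fractional} in this setting.

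Your route is more elementary. It needs only the easy direction of \cref{cell-vs-twins} (same cell implies twins), and neither the degree computation behind \cref{cell-vs-gcd} nor the CEP criterion. The same computation with $w\in V_d\setminus\{u,v\}$ also yields \cref{no-pst-2}. The paper's route keeps everything inside the equitable-partition framework, which it then reuses via quotient graphs for the positive PST results.

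Finally, the hypothesis $|V(\Gamma(\mathbb{Z}_n))|>2$ plays no role in your argument, and there is no small case to verify. The only composite $n$ with at most two vertices are $n=4$ and $n=9$. In both, all vertices have the same gcd with $n$, so the statement is vacuous there.
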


\begin{proof}
	Let $d_s = \gcd{(u, n)}$ and $d_t = \gcd{(v, n)}$, and let $\pi_u$ and $\pi_v$
	denote the CEPs refining the partitions $\{\{u\}, V(G) \setminus \{u\}\}$
	and $\{\{v\}, V(G) \setminus \{v\}\}$, respectively. By \cref{cell-vs-gcd},
	any cell of an equitable partition of $\Gamma(\mathbb{Z}_n)$ must contain
	vertices that share the same gcd with $n$. Therefore, $\pi_u = \left\{\{u\},
		V_{d_s} \setminus \{u\}\right\} \displaystyle\bigcup_{i\neq s}V_{d_i}$ and
	$\pi_v = \left\{\{v\}, V_{d_t} \setminus \{v\}\right\}
		\displaystyle\bigcup_{i\neq t}V_{d_i}$. Since $\pi_u \neq \pi_v$, PST
	cannot occur between $u$ and $v$.
\end{proof}

\begin{thm}\label{no-pst-2}
	Let $d_i$ be a proper divisor of \,$n$, and consider vertices $u, v \in
		V_{d_i}$. If\, $|V_{d_i}| > 2$, then perfect state transfer cannot occur
	between $u$ and $v$.
\end{thm}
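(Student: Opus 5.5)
The plan is to follow the same strategy as in \cref{no-pst-1}, using Godsil's observation that PST between $u$ and $v$ forces $\pi_u = \pi_v$. Here $\pi_u$ and $\pi_v$ are the coarsest equitable partitions refining $\{\{u\}, V(G)\setminus\{u\}\}$ and $\{\{v\}, V(G)\setminus\{v\}\}$. Equivalently, it forces $v$ to be a singleton cell of $\pi_u$. So it suffices to identify $\pi_u$ explicitly and show that $v$ is not a singleton in it whenever $|V_{d_i}| > 2$.

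First, I will describe $\pi_u$. Since the partition $\pi_{\mathcal{D}}$ is equitable and $\{\{u\},V\setminus\{u\}\}$ is coarser than its refinement obtained by splitting off $u$, I claim that
\[
\pi_u = \{\{u\},\, V_{d_i}\setminus\{u\}\} \cup \{V_{d_j} : j \neq i\}.
\]
There are two things to check.

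\emph{This partition is equitable.} Every vertex of $V_{d_i}$ has the same neighbourhood outside $V_{d_i}$, and inside $V_{d_i}$ the cell is either a clique or an independent set. Hence, for $w \in V_{d_i}\setminus\{u\}$, the number of neighbours of $w$ in $\{u\}$ is $1$ or $0$ uniformly. Likewise, its number of neighbours in $V_{d_i}\setminus\{u\}$ is $|V_{d_i}|-2$ or $0$ uniformly. Every vertex of another cell $V_{d_j}$ is adjacent either to all of $V_{d_i}$ or to none of it, so its counts into $\{u\}$ and into $V_{d_i}\setminus\{u\}$ are constant.

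\emph{This partition is the coarsest one.} By \cref{cell-vs-gcd}, any equitable partition has cells contained in the gcd-classes $V_{d_j}$. Any equitable partition refining $\{\{u\},V\setminus\{u\}\}$ has $\{u\}$ as a cell. The partition above is therefore the coarsest possible partition satisfying both constraints, and it is equitable.

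With $\pi_u$ identified, the conclusion is immediate. If $|V_{d_i}| > 2$, then the cell $V_{d_i}\setminus\{u\}$ contains $v$ and at least one further vertex, so $v$ is not a singleton of $\pi_u$. Symmetrically, $\pi_v$ contains $\{v\}$ as a cell while $\pi_u$ does not, so $\pi_u \neq \pi_v$ and PST between $u$ and $v$ is impossible.

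As a sanity check, this also agrees with \cite[Corollary 2]{Monterde2023Fractional}: $V_{d_i}$ is a twin set of size at least $3$ by \cref{cell-vs-twins}, which already rules out proper FR. The equitable-partition route nonetheless keeps the argument self-contained, in the style of the paper.

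The only point requiring care is the equitability check for the refined partition, namely that splitting $u$ off its cell preserves equitability. This reduces to the clique/independent-set dichotomy for $\Gamma(V_{d_i})$ together with the all-or-nothing adjacency between distinct cells, both recorded in \cref{zdg}.
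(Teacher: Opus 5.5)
Your proposal is correct and follows the same route as the paper. You identify $\pi_u=\{\{u\},V_{d_i}\setminus\{u\}\}\cup\{V_{d_j}:j\neq i\}$ via \cref{cell-vs-gcd}, then conclude from $\pi_u\neq\pi_v$ (Godsil's necessary condition) that PST cannot occur, and you also supply the equitability and coarseness checks for $\pi_u$ that the paper leaves implicit in its phrase ``by an argument similar to the proof of \cref{no-pst-1}''.
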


\begin{proof}
	By an argument similar to the proof of \cref{no-pst-1}, we obtain

	\begin{align*}
		\pi_u & = \left\{\{u\}, V_{d_i} \setminus \{u\}\right\}
		\displaystyle\bigcup_{j\neq i}V_{d_j}, \text{ and}      \\
		\pi_v & = \left\{\{v\}, V_{d_i} \setminus \{v\}\right\}
		\displaystyle\bigcup_{j\neq i}V_{d_j}.
	\end{align*}

	Observe that $\pi_u \neq \pi_v$; thus, the claim follows.
\end{proof}

\begin{rmk}
	The results in \cref{no-pst-1}  and \cref{no-pst-2} are consistent with the
	general properties of twin sets from \cite[Corollary 2]{Monterde2023Fractional}.
\end{rmk}

It can be deduced from \cref{no-pst-1} and \cref{no-pst-2} that PST cannot occur
between any vertex pair in $\Gamma(\mathbb{Z}_n)$ whenever $n$ is not a multiple
of $3$ or $4$ (e.g., $10, 50, 70, 125$). To investigate the remaining cases, we
state the following key theorem due to Bachman et al.~\cite{Bachman2012Perfect}.

\begin{thm}\cite[Theorem 1]{Bachman2012Perfect}\label{pst-quotient}
	Let $G$ be a graph with an equitable partition $\pi$. If vertices $u$ and
	$v$ belong to singleton cells in $\pi$, then for any time $t$,
	\[
		(e^{-itA(G)})_{u,v} = (e^{-itA(G/\pi)})_{u,v}.
	\]
	Consequently, $G$ admits perfect state transfer between $u$ and $v$ if and
	only if the quotient graph $G/\pi$ admits perfect state transfer between
	the corresponding vertices at the same time $t$.
\end{thm}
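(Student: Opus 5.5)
The plan is to show that the $(u,v)$ entry of $e^{-itA(G)}$ coincides with the $(u,v)$ entry of $e^{-itC}$, where $C=\hat{S}^TA\hat{S}$ is the symmetrized quotient matrix. The key is the intertwining relation from \cref{equitable_char}, namely $A\hat{S}=\hat{S}C$. The $(u,v)$ entry here means the entry indexed by the singleton cells $\{u\}$ and $\{v\}$. We then pass from $C$ to the quotient matrix $Q=A(G/\pi)$, using that the relevant diagonal scalings are trivial on singleton cells.

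First, iterating $A\hat{S}=\hat{S}C$ gives $A^k\hat{S}=\hat{S}C^k$ for all $k\ge 0$. Summing the power series, we get $e^{-itA}\hat{S}=\hat{S}e^{-itC}$ for every $t$. Let $u$ form the singleton cell $V_j$ and $v$ the singleton cell $V_\ell$. Then the $j$-th column of $\hat{S}$ is exactly $\mathbf{e}_u$, since the normalization factor is $1/\sqrt{1}=1$. Likewise $\hat{S}^T\mathbf{e}_v=\mathbf{e}_\ell$, because $v$ lies only in cell $V_\ell$ and that column has entry $1$ at $v$. Hence
\[
(e^{-itA})_{v,u}=\mathbf{e}_v^T e^{-itA}\hat{S}\mathbf{e}_j=\mathbf{e}_v^T\hat{S}e^{-itC}\mathbf{e}_j=\mathbf{e}_\ell^T e^{-itC}\mathbf{e}_j=(e^{-itC})_{\ell,j}.
\]
By symmetry of both exponentials this also gives the $(u,v)$ entry.

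Next, recall that $C=DQD^{-1}$ with $D=\operatorname{diag}(\sqrt{|V_1|},\dots,\sqrt{|V_k|})$. Therefore $e^{-itC}=De^{-itQ}D^{-1}$, and
\[
(e^{-itC})_{\ell,j}=\sqrt{|V_\ell|/|V_j|}\,(e^{-itQ})_{\ell,j}.
\]
Since both cells are singletons, this factor is $1$. Thus the entry agrees with that of $e^{-itA(G/\pi)}$, and also with that of $e^{-itC}$.

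For the consequence, PST between $u$ and $v$ at time $t$ means $|(e^{-itA})_{u,v}|=1$. By the equality of entries, this is equivalent to the same condition in the quotient at the same $t$. The sign convention in the exponent is irrelevant, since replacing $t$ by $-t$ conjugates entries.

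The only delicate step is justifying the exponent identity and the singleton bookkeeping. One must also be careful about what PST means for a non-symmetric weighted $Q$. I would phrase PST in the quotient via the symmetrized $C$, or note that the diagonal similarity leaves singleton–singleton entries unchanged. Unitarity holds for $e^{-itC}$, so modulus $1$ makes sense there.
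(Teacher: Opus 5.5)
Your proof is correct and is essentially the standard argument behind the cited result \cite{Bachman2012Perfect}; the paper gives no proof of its own and only cites that source. You iterate $A\hat{S}=\hat{S}C$ to get $e^{-itA}\hat{S}=\hat{S}e^{-itC}$ and read off the singleton columns, and your extra step $e^{-itC}=De^{-itQ}D^{-1}$ (scaling factor $1$ on singleton--singleton entries) is exactly what reconciles this with the paper's convention that $A(G/\pi)=Q$ is the unsymmetrized quotient.
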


Using this result, the following theorem determines the values of $n$ for which
PST occurs and explicitly identifies the vertex pairs exhibiting this phenomenon.

\begin{thm}\label{has-pst}
	The zero-divisor graph $\Gamma(\mathbb{Z}_n)$ admits perfect state transfer
	whenever $n = 8, n =9$, or $n = 3p$, where $p \neq 3$ is a prime; in
	the latter case, the transfer occurs between vertices $p$ and $2p$ at the
	minimum time $\tau = \frac{\pi}{\sqrt{2(p-1)}}$.
\end{thm}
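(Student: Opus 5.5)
The plan is to check the three families separately. In each case I would first identify $\Gamma(\mathbb{Z}_n)$ explicitly from the cell decomposition \eqref{zdg_iso}, and then compute the relevant entry of $H(t)$.

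For $n=9$ the only proper divisor is $3$, so $V_3=\{3,6\}$. Since $9\mid 3^2$, this cell induces $K_2$. For $K_2$ we have $H(t)_{3,6}=i\sin t$, which has modulus $1$ at $t=\pi/2$.

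For $n=8$ the proper divisors are $2$ and $4$, with $V_2=\{2,6\}$ and $V_4=\{4\}$. The cell $V_2$ is null because $8\nmid 4$, and $V_2\sim V_4$ in $\Upsilon_8$ because $8\mid 8$. Hence $\Gamma(\mathbb{Z}_8)\cong P_3$ with centre $4$. The classical computation for $P_3$, with eigenvalues $0,\pm\sqrt2$, gives
\[
H(t)_{2,6}=\tfrac12(\cos\sqrt2 t-1),
\]
so PST occurs between $2$ and $6$ at $t=\pi/\sqrt2$.

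For $n=3p$ with $p\neq 3$ prime, the proper divisors are $3$ and $p$. The cell $V_p=\{p,2p\}$ has size $\varphi(3)=2$, and $V_3$ has size $\varphi(p)=p-1$. Both cells induce null graphs, because $3p\nmid p^2$ and $3p\nmid 9$ (using $p\neq3$). The two cells are joined because $3p\mid 3p$. Thus $\Gamma(\mathbb{Z}_{3p})\cong K_{2,p-1}$, and $p$ and $2p$ are false twins in the part of size $2$.

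To compute the transfer I would refine $\pi_{\mathcal D}$ to the partition $\pi=\{\{p\},\{2p\},V_3\}$. This is equitable: each of $p,2p$ has all $p-1$ neighbours in $V_3$, and each vertex of $V_3$ has exactly one neighbour in each singleton. By \cref{pst-quotient}, $H(t)_{p,2p}$ equals the corresponding entry of $\exp(itQ)$, where $Q=A(G/\pi)$. Since $Q=D^{-1}CD$ and $D$ has entries $1$ at singleton cells, this is also the corresponding entry of $\exp(itC)$. Here $C$ is the weighted path $P_3$ whose two edges both have weight $w=\sqrt{p-1}$ and whose middle vertex is $V_3$.

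The spectral decomposition of $C$ gives eigenvalues $0$ and $\pm\sqrt2\,w$. The $0$-eigenvector is $(1,0,-1)/\sqrt2$, and the $\pm\sqrt2\,w$-eigenvectors are $(1,\pm\sqrt2,1)/2$. From these,
\[
\exp(itC)_{p,2p}=\tfrac12\bigl(\cos(\sqrt{2(p-1)}\,t)-1\bigr).
\]
This has modulus $1$ exactly when $\cos(\sqrt{2(p-1)}\,t)=-1$, and the smallest positive such time is $\tau=\pi/\sqrt{2(p-1)}$.

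The case $p=2$, i.e.\ $n=6$, is included in this argument: there $K_{2,1}=P_3$, matching the $n=8$ picture. The only step that needs some care is justifying that \cref{pst-quotient}, which is stated for the unsymmetrized quotient, may be applied through the symmetrized matrix $C$. This reduces to the observation $e^{itQ}=D^{-1}e^{itC}D$ together with $D_{uu}=D_{vv}=1$ on the singleton cells. Alternatively, one can compute $H(t)$ directly on $K_{2,p-1}$ using the eigenvector $\mathbf e_p-\mathbf e_{2p}$ (eigenvalue $0$) and the two eigenvectors of $C$ lifted to $G$, which gives the same formula.
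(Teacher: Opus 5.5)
Your proof is correct and follows the paper's route. Like the paper, you identify $\Gamma(\mathbb{Z}_8)\cong P_3$, $\Gamma(\mathbb{Z}_9)\cong K_2$ and $\Gamma(\mathbb{Z}_{3p})\cong K_{2,p-1}$, then pass to the equitable partition $\{\{p\},\{2p\},V_3\}$, whose symmetrized quotient is $\sqrt{p-1}\,A(P_3)$, and invoke \cref{pst-quotient}. Two details you supply are left implicit in the paper: the explicit amplitude $\tfrac12\bigl(\cos(\sqrt{2(p-1)}\,t)-1\bigr)$, which justifies the minimality of $\tau$, and the identity $e^{itQ}=D^{-1}e^{itC}D$, which justifies working with $C$ instead of $Q$.
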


\begin{proof}
	For $n = 8$, the graph $\Gamma(\mathbb{Z}_8)$ is isomorphic to $P_3$, which
	is well known to admit PST between its end vertices, namely $2$ and $6$, at
	$\tau = \frac{\pi}{\sqrt{2}}$; similarly, for $n = 9$, $\Gamma(\mathbb{Z}_n)$
	is isomorphic to $P_2$, which admits PST between its  end vertices at $\tau =
		\frac{\pi}{2}$. Now, suppose that $n = 3p$ for $p\neq 3$ . Then,
    $\Gamma(\mathbb{Z}_{3p})$ is isomorphic to the complete bipartite
    graph $K_{2, p-1}$. Let $\pi = \left\{\{p\}, \{2p\},
		V(\Gamma(\mathbb{Z}_{3p}))\setminus \{p, 2p\}\right\}$
	be an equitable partition of $V(\Gamma(\mathbb{Z}_{3p}))$. Let $C$ be the
	adjacency matrix of the symmetrized quotient graph, defined by:
	\[
		C = \begin{pmatrix}
			0          & 0          & \sqrt{p-1} \\
			0          & 0          & \sqrt{p-1} \\
			\sqrt{p-1} & \sqrt{p-1} & 0
		\end{pmatrix}.
	\]
	Since $C = \sqrt{p-1}\,A(P_3)$, it follows that PST occurs between vertices
	$p$ and $2p$ at minimum time $\tau = \frac{\pi}{\sqrt{2(p-1)}}$ in the quotient
	graphs $\Gamma(\mathbb{Z}_{3p})/\pi$. Therefore, by \cref{pst-quotient}, the
	result follows.
\end{proof}

\begin{rmk}
	We note a subtle distinction regarding the necessary conditions for PST
	discussed in \cite[Corollary 9.3]{Godsil2012State, Godsil2017State}. While
	the proof of the corollary successfully establishes that PST between
	vertices $u$ and $v$ requires $\pi_u = \pi_v$, the corollary statement
	itself claims the stronger condition of $\Delta_u = \Delta_v$, which appears
	to be a typographical error. This stronger claim does not hold in general.
	For example, consider the Cartesian product $G = P_2 \square K_{1,4}$, where
	$V(P_2) = \{0, 1\}$ and $V(K_{1,4}) = \{c, l_1, l_2, l_3, l_4\}$ (see \cref{p2-k14}).

	\begin{figure}[htbp]
		\centering
		\includegraphics[width=.6\linewidth]{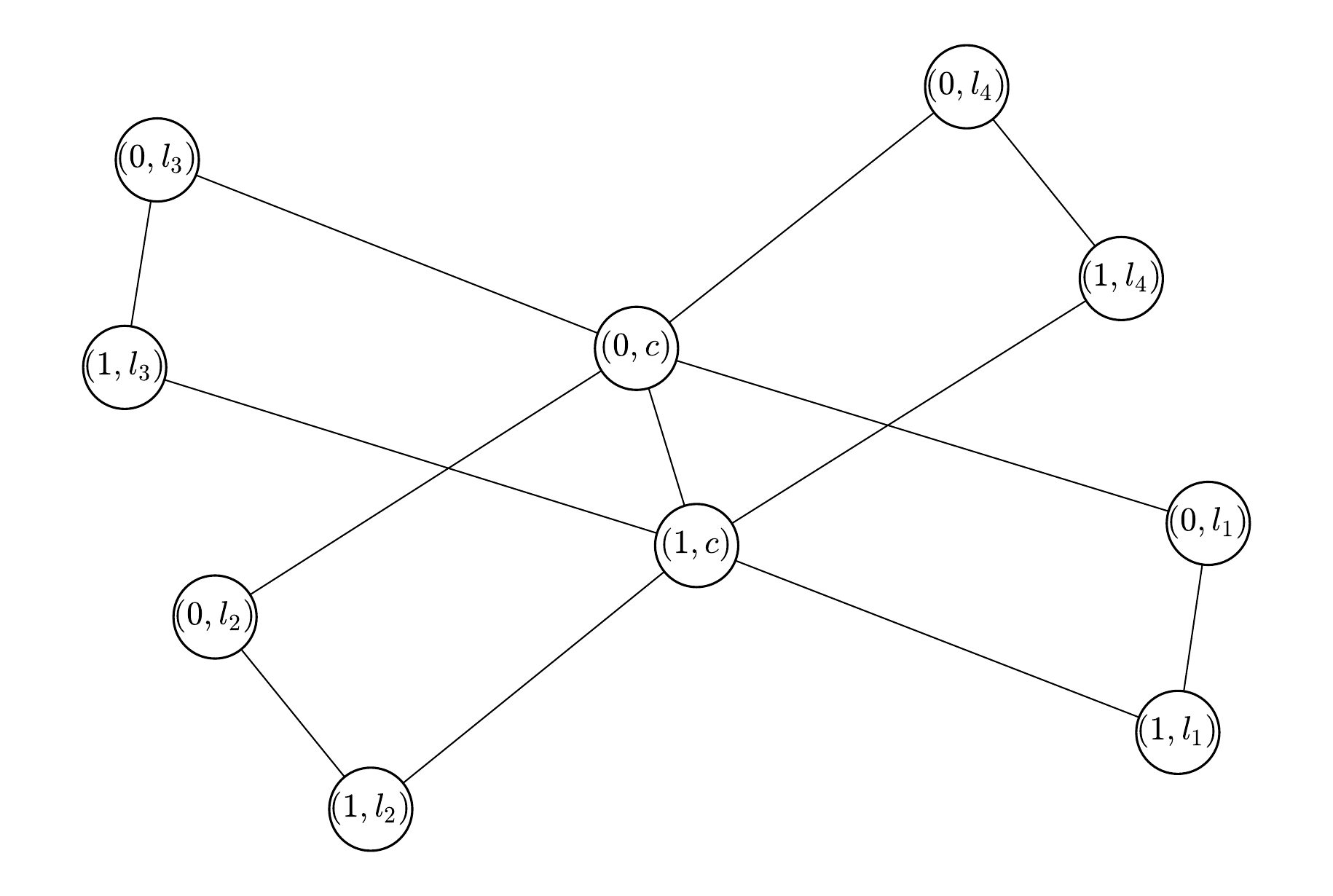}
		\caption{The Cartesian product graph $P_2 \square K_{1,4}$}
		\label{p2-k14}
	\end{figure}

	It is well known that $P_2$ has PST between $0$ and $1$ at time $t =
		\frac{\pi}{2}$ with a transition magnitude of $|-i| = 1$. Furthermore,
	since $\sigma(K_{1,4})$ consists of even integers, $K_{1, 4}$ is periodic at
	the central vertex $c$ at time $t = \frac{\pi}{2}$ with the transition
	amplitude $\left[ H_{A(K_{1,4})}\left(\frac{\pi}{2}\right) \right]_{c,c} = -1$.
	Using the property that the transition matrix of a Cartesian product is the
	Kronecker product of its factors \cite[Lemma 4.2]{Godsil2012State}, we have
	$H_{A(G)}(t) = H_{A(P_2)}(t) \otimes H_{A(K_{1,4})}(t)$, it follows that $G$
	admits PST between $(0, c)$ and $(1, c)$ at time $t = \frac{\pi}{2}$.

	However, the distance partitions for these vertices are not identical.
	Specifically:
	\begin{align*}
		\Delta_{(0,c)} & = \{\{(0, c)\}, \{(1,c)\} \cup L_0, L_1\}, \text{ and} \\
		\Delta_{(1,c)} & = \{\{(1, c)\}, \{(0,c)\} \cup L_1, L_0\},
	\end{align*}
	where
	\begin{align*}
		L_0 & = \{(0, l_1), (0, l_2), (0, l_3), (0, l_4)\}, \text{ and} \\
		L_1 & = \{(1, l_1), (1, l_2), (1, l_3), (1, l_4)\}.
	\end{align*}
	Since $\{(0,c)\} \in \Delta_{(0,c)}$ but $\{(0,c)\} \notin \Delta_{(1,c)}$,
	we have $\Delta_{(0,c)} \neq \Delta_{(1,c)}$.

	In contrast, the CEPS for these vertices coincide:
	\[
		\pi_{(0,c)} = \pi_{(1,c)} = \{\{(0,c)\}, \{(1,c)\}, L_0, L_1\}.
	\]

	This provides a counterexample to the claim that $\Delta_u = \Delta_v$ is a
	necessary condition for PST.
\end{rmk}

\section{Fractional revival}\label{FR}

As discussed in \cref{PST}, the only candidates for proper FR are the twin sets
of size $2$, where PST occurs if $n = 8$, $n=9$ or $n = 3p$. However, if $n
	\notin \{8, 9, 3p\}$, it remains unclear whether the vertices in the twin sets of
size $2$ exhibit the phenomenon. For instance, if $n = 18$, it is not
immediately apparent whether the pairs $\{3, 15\}$ and $\{6, 12\}$ exhibit PST.
Although the general case for $n$ is not fully determined, the structure of
such pairs allows for a partial characterization.

Let $A(G)$ have the spectral decomposition $\sum_{j}\lambda_jE_j$. The
\emph{eigenvalue support} of a vertex $u$ is defined as the set $\Phi_u = \{\lambda_j:
	E_j\mathbf{e}_u \neq \mathbf{0}\}$. Vertices $u$ and $v$ are \emph{cospectral}
if $(E_j)_{u, u} = (E_j)_{v, v}$ for each $j$, and \emph{parallel} if
$E_j\mathbf{e}_u = c_jE_j\mathbf{e}_v$ for some constants $c_j$. In the stronger
case where $E_{j}\mathbf{e}_u = \pm E_j\mathbf{e}_v$ for each $j$, the vertices
are said to be \emph{strongly cospectral}. Under this condition, the support
$\Phi_u = \Phi_v$ can be written as $\Phi_u = \Phi_{u, v}^{+} \cup \Phi_{u,
		v}^{-}$, where
\[
	\Phi_{u,v}^{+} = \{\lambda_j: E_j\mathbf{e}_u = E_j\mathbf{e}_v\}\;
	\text{and}\;\Phi_{u,v}^{-} = \{\lambda_j: E_j\mathbf{e}_u = -E_j\mathbf{e}_v\}
\]

By \cite[Lemma 4]{Monterde2023Fractional}, proper FR between twin vertices $u$
and $v$ requires them to be strongly cospectral. Since a twin pair is strongly
cospectral if and only if they are parallel \cite[Corollary 3.8]{Monterde2022Strong},
the occurrence of proper FR between twins requires parallelism. Therefore, our
task reduces to establishing the specific conditions under which these twin
pairs are parallel. To characterize parallel vertices in $\Gamma(\mathbb{Z}_n)$,
we first establish the following lemma.

\begin{lem}\label{equitable_parallel}
	Let $A$ be the adjacency matrix of $G$, and let $Q$ be the adjacency matrix
	of the quotient graph $G/\pi$ for some equitable partition $\pi$. If
	$\lambda$ is an eigenvalue of $A$ that is not an eigenvalue of $Q$, then any
	eigenvector $\mathbf{v}$ associated with $\lambda$ sums to zero on every
	cell of $\pi$.
\end{lem}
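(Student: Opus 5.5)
We will use the characteristic matrix $S$ of $\pi$ and the relation $AS = SQ$ recorded after \cref{equitable_char}. Our aim is to show that $S^T\mathbf{v} = \mathbf{0}$. This is exactly the statement that $\mathbf{v}$ sums to zero on every cell, since the $j$-th entry of $S^T\mathbf{v}$ is $\sum_{x \in V_j} \mathbf{v}_x$.

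Transposing $AS = SQ$ and using that $A$ is symmetric gives $S^T A = Q^T S^T$. Now let $A\mathbf{v} = \lambda \mathbf{v}$. Then
\[
	Q^T (S^T\mathbf{v}) = S^T A \mathbf{v} = \lambda\, S^T\mathbf{v}.
\]
So if $S^T\mathbf{v} \neq \mathbf{0}$, it is an eigenvector of $Q^T$ with eigenvalue $\lambda$. But $Q$ and $Q^T$ have the same characteristic polynomial, so $\lambda$ would be an eigenvalue of $Q$, which contradicts the hypothesis. Hence $S^T\mathbf{v} = \mathbf{0}$.

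There is an alternative route through the symmetrized quotient. Since $C = DQD^{-1}$ is symmetric and satisfies $A\hat{S} = \hat{S}C$, we get $\hat{S}^T A = C\hat{S}^T$ directly. The same argument applied to $\hat{S}^T\mathbf{v}$ then shows $\hat{S}^T\mathbf{v} = \mathbf{0}$, which is again equivalent to vanishing cell sums, because $\hat{S}$ differs from $S$ only by positive column scalings.

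There is no real obstacle here. The only point needing care is that $Q$ need not be symmetric, so one must pass to $Q^T$ (or to $C$) and note that this matrix has the same spectrum as $Q$.
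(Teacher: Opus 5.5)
Your proof is correct, and it takes a somewhat different, more direct route than the paper.

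The paper lifts from $\mathcal{C}(S)$. It splits $\mathbf{v}=\mathbf{x}+\mathbf{y}$ along $\mathcal{C}(S)\oplus\mathcal{C}(S)^\perp$ and uses the $A$-invariance of both summands (which is where symmetry of $A$ enters) to get $A\mathbf{x}=\lambda\mathbf{x}$. It then writes $\mathbf{x}=S\mathbf{w}$ and uses the full column rank of $S$ to conclude $Q\mathbf{w}=\lambda\mathbf{w}$, so $\mathbf{w}=\mathbf{0}$.

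You instead push $\mathbf{v}$ down. You apply the transposed relation $S^TA=Q^TS^T$ directly to $\mathbf{v}$, so no decomposition, invariance argument, or rank argument is needed. The only extra observation is that $Q^T$ (or $C$) has the same spectrum as $Q$, which you correctly flag.

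The two computations are the same up to the diagonal scaling $D$. The paper's $\mathbf{w}$ equals $(S^TS)^{-1}S^T\mathbf{v}=D^{-2}S^T\mathbf{v}$. Symmetry of $C=DQD^{-1}$ gives $Q=D^{-2}Q^TD^2$, and this identity turns your eigenvector $S^T\mathbf{v}$ of $Q^T$ into the paper's eigenvector $\mathbf{w}$ of $Q$.

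What each approach buys:
\begin{itemize}
\item The paper's version shows explicitly that the $\mathcal{C}(S)$-component of a $\lambda$-eigenvector is itself a $\lambda$-eigenvector of $A$. The same splitting $\mathbf{e}_u=\mathbf{m}+\mathbf{z}$ is reused in later proofs.
\item Yours reaches $S^T\mathbf{v}=\mathbf{0}$, which is literally the cell-sum statement, in one line.
\end{itemize}
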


\begin{proof}
	Let $S$ be the characteristic matrix of $\pi$. Since $AS = SQ$, the column
	space of $S$, denoted by $\mathcal{C}(S)$, is $A$-invariant. Since $A$ is
	symmetric, its orthogonal complement $\mathcal{C}(S)^\perp$ is also
	$A$-invariant. We uniquely decompose the eigenvector $\mathbf{v}$ as
	$\mathbf{v} = \mathbf{x} + \mathbf{y}$, where $\mathbf{x} \in \mathcal{C}(S)$
	and $\mathbf{y} \in \mathcal{C}(S)^\perp$.

	The eigenvalue equation $A\mathbf{v} = \lambda\mathbf{v}$ yields $A\mathbf{x}
		+ A\mathbf{y} = \lambda\mathbf{x} + \lambda\mathbf{y}$. By the
	$A$-invariance of the subspaces and the uniqueness of the orthogonal
	decomposition, it follows that $A\mathbf{x} = \lambda\mathbf{x}$. Since
	$\mathbf{x} \in \mathcal{C}(S)$, we may write $\mathbf{x} = S\mathbf{w}$
	for some $\mathbf{w} \in \mathbb{R}^k$. Then,
	\begin{equation*}
		SQ\mathbf{w} = AS\mathbf{w} = A\mathbf{x} = \lambda\mathbf{x} =
		\lambda S\mathbf{w}.
	\end{equation*}
	Because $S$ has full column rank, $SQ\mathbf{w} = \lambda S\mathbf{w}$
	implies $Q\mathbf{w} = \lambda\mathbf{w}$. However, $\lambda$ is not an
	eigenvalue of $Q$ by hypothesis, so we must have $\mathbf{w} = \mathbf{0}$,
	which implies $\mathbf{x} = \mathbf{0}$. Thus, $\mathbf{v} = \mathbf{y} \in
		\mathcal{C}(S)^\perp$, completing the proof.
\end{proof}

Monterde’s work provides a general bound on the size of twin sets that can admit
parallel vertices. Specifically, \cite[Corollary 3.10]{Monterde2022Strong} shows
that parallelism cannot occur when $T \ge 3$. Our next results demonstrate that
for the graph $\Gamma(\mathbb{Z}_n)$, when $|T| = 2$ and the vertices are false
twins, they are parallel.

\begin{thm}\label{null-parallel}
	Let $V_d \in \pi_{\mathcal{D}}$ be a cell of size $|V_d| \ge 2$ such that
	the induced subgraph $\Gamma(V_d) \cong \overline{K}_{|V_d|}$. Then, any
	two distinct vertices in $V_d$ are parallel if and only if $|V_d| = 2$.
\end{thm}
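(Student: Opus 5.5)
The proof splits into the two directions of the equivalence. The spectral input throughout is \cref{spectrum1} together with the fact that $\pi_{\mathcal{D}}$ is equitable, so that $\mathbb{R}^{V}$ decomposes into the $A$-invariant subspaces $\mathcal{C}(S)$ and $\mathcal{C}(S)^\perp$. Write $\theta=0$ for the eigenvalue attached to the false twins in $V_d$, and recall that for $n\neq 4$ the quotient matrix $C(\Upsilon_n)$, which is similar to $Q$, has no zero eigenvalue. The case $n=4$ does not occur here, since then the only cell has size $1$. Consequently $0\notin\sigma(Q)$.

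For necessity, assume $|V_d|\ge 3$. The quickest route is to cite \cite[Corollary 3.10]{Monterde2022Strong}, which rules out parallel vertices inside a twin set of size at least $3$. A direct argument is also available. Take a third vertex $w\in V_d$. Then $\mathbf{e}_u-\mathbf{e}_w$ is a $0$-eigenvector, and $(\mathbf{e}_u-\mathbf{e}_w)^T\mathbf{e}_u=1$ while $(\mathbf{e}_u-\mathbf{e}_w)^T\mathbf{e}_v=0$. Hence $E_0\mathbf{e}_u\neq 0$ but $\langle E_0\mathbf{e}_v,\mathbf{e}_u-\mathbf{e}_w\rangle=0$. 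If $E_0\mathbf{e}_u=cE_0\mathbf{e}_v$, pairing with $\mathbf{e}_u-\mathbf{e}_w$ gives $1=0$, a contradiction. The same argument applied with the roles of $u$ and $v$ swapped (using $\mathbf{e}_v-\mathbf{e}_w$) excludes $E_0\mathbf{e}_v=cE_0\mathbf{e}_u$, so $u$ and $v$ are not parallel.

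For sufficiency, let $V_d=\{u,v\}$ and split the spectrum into $\theta=0$ and the remaining eigenvalues.
\begin{enumerate}[label=(\roman*)]
\item For any eigenvalue $\lambda\neq 0$: since $u$ and $v$ are twins, the transposition $P$ swapping them is an automorphism, so $P$ commutes with every $E_\lambda$. Because $u$ and $v$ lie in the same cell, $E_\lambda\mathbf{e}_u-E_\lambda\mathbf{e}_v=E_\lambda(\mathbf{e}_u-\mathbf{e}_v)$. The vector $\mathbf{e}_u-\mathbf{e}_v$ is a $0$-eigenvector, so $E_\lambda(\mathbf{e}_u-\mathbf{e}_v)=0$ by orthogonality of eigenspaces. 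Thus $E_\lambda\mathbf{e}_u=E_\lambda\mathbf{e}_v$.
\item For $\theta=0$: we need $E_0\mathbf{e}_u=-E_0\mathbf{e}_v$, i.e.\ $E_0(\mathbf{e}_u+\mathbf{e}_v)=0$. Since $0\notin\sigma(Q)$, \cref{equitable_parallel} shows that every $0$-eigenvector sums to zero on each cell, in particular on $V_d=\{u,v\}$. So every $\mathbf{z}$ in the $0$-eigenspace satisfies $z_u+z_v=0$, meaning $\mathbf{e}_u+\mathbf{e}_v\perp\ker A$. This gives $E_0(\mathbf{e}_u+\mathbf{e}_v)=0$.
\end{enumerate}
Together (i) and (ii) give strong cospectrality, and hence parallelism, with $\Phi^-_{u,v}=\{0\}$.

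The main obstacle is step (ii), and specifically confirming the hypothesis $0\notin\sigma(Q)$, which rests on the cited \cite[Theorem 3.5]{Bajaj2022On} and the exclusion of $n=4$. Once that holds, \cref{equitable_parallel} delivers the conclusion immediately.
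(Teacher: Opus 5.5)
Your proof is correct. The sufficiency half follows the paper's argument. For $\lambda\neq 0$ you use that $\mathbf{e}_u-\mathbf{e}_v\in\ker A$ is orthogonal to every other eigenspace; the paper states the same fact as ``false twins have equal rows of $A$''. For $\lambda=0$ you invoke \cref{equitable_parallel} with $0\notin\sigma(Q)$ to get $\mathbf{1}_{V_d}\perp\ker A$, which is exactly the paper's $E_0\mathbf{m}=\mathbf{0}$. The transposition automorphism and the phrase ``because $u$ and $v$ lie in the same cell'' play no role there; that step is just linearity.

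Where you genuinely differ is necessity. The paper computes the full projection $E_0\mathbf{e}_u=\mathbf{e}_u-\frac{1}{|V_d|}\mathbf{1}_{V_d}$ and solves for the proportionality constant at the $u$- and $v$-coordinates, forcing $(|V_d|-1)^2=1$. That single computation settles both directions and produces the explicit $0$-projection the paper reuses in \cref{no-0-and-1}. However, it depends on $0\notin\sigma(C(\Upsilon_n))$, i.e.\ on \cite[Theorem 3.5]{Bajaj2022On}.

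Your pairing with the third twin vector $\mathbf{e}_u-\mathbf{e}_w\in\ker A$ uses only the symmetry of $E_0$. It therefore needs no information about the quotient spectrum, and it works verbatim for any twin set of size at least $3$ in any graph, with $0$ replaced by the twin eigenvalue. This is essentially the argument behind \cite[Corollary 3.10]{Monterde2022Strong}. So in your version the hypothesis $0\notin\sigma(Q)$ is needed only for sufficiency.
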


\begin{proof}
	Let $u, v \in V_d$ be distinct false twins. First, we consider the case
	$\lambda \neq 0$. Let $\mathbf{x}$ be any eigenvector associated with
	$\lambda$. Because false twins share identical neighborhoods, their
	corresponding rows in the adjacency matrix $A = A(\Gamma(\mathbb{Z}_n))$
	are equal. Evaluating the $u$-th and $v$-th entries of $A\mathbf{x} =
		\lambda\mathbf{x}$ yields:

	\[\lambda (\mathbf{e}_u^T \mathbf{x}) = \mathbf{e}_u^T (A\mathbf{x}) =
		(\mathbf{e}_u^T A) \mathbf{x} = (\mathbf{e}_v^T A) \mathbf{x} =
		\mathbf{e}_v^T (A\mathbf{x}) = \lambda (\mathbf{e}_v^T \mathbf{x}).
	\]
	Since $\lambda \neq 0$, dividing by $\lambda$ yields $\mathbf{e}_u^T\mathbf{x}
		= \mathbf{e}_v^T\mathbf{x}$. Because this equality holds for every
	eigenvector in the $\lambda$-eigenspace $\mathcal{E}_\lambda$, the projections
	of $\mathbf{e}_u$ and $\mathbf{e}_v$ onto $\mathcal{E}_\lambda$ are identical.
	Thus, $E_\lambda \mathbf{e}_u = E_\lambda \mathbf{e}_v$.

	Consider the case $\lambda = 0$. Let $S$ be the characteristic matrix of the
	equitable partition $\pi_{\mathcal{D}}$. Following the argument in
	\cref{equitable_parallel}, we decompose $\mathbf{e}_u$ as follows:

	\[
		\mathbf{e}_u = \underbrace{\frac{1}{|V_d|}
			\mathbf{1}_{V_d}}_{\mathbf{m}} + \underbrace{\left( \mathbf{e}_u -
			\frac{1}{|V_d|} \mathbf{1}_{V_d} \right)}_{\mathbf{z}},
	\]
	where $\mathbf{1}_{V_d}$ denotes the characteristic vector of the cell $V_d$.

	Let $E_0$ denote the orthogonal projector onto the null space $\mathcal{N}(A)$.
	Applying $E_0$ to both sides of our decomposition yields:
	\[
		E_0 \mathbf{e}_u = E_0 \mathbf{m} + E_0 \mathbf{z}.
	\]

	Because all vertices in $V_d$ share identical columns in $A$, we know
	$A\mathbf{z} = \mathbf{0}$, meaning $\mathbf{z} \in \mathcal{N}(A)$ and
	$E_0 \mathbf{z} = \mathbf{z}$. Meanwhile, $\mathbf{m}$ is constant on the
	cells of $\pi_{\mathcal{D}}$ and therefore it lies in $\mathcal{C}(S)$.
	By \cref{equitable_parallel}, the absence of $0$ as an eigenvalue of the
	quotient matrix guarantees $\mathcal{N}(A) \perp \mathcal{C}(S)$. Thus,
	$E_0 \mathbf{m} = \mathbf{0}$. Applying $E_0$ to our decomposition yields
    \begin{equation}\label{charvec-projection}
		E_0 \mathbf{e}_u = \mathbf{e}_u - \frac{1}{|V_d|} \mathbf{1}_{V_d}
    \end{equation}

	For $u$ and $v$ to be parallel in the null space, we require
	$E_0 \mathbf{e}_u = c E_0 \mathbf{e}_v$ for some $c \neq 0$. Substituting
	our projection formula yields
	\[
		\mathbf{e}_u - \frac{1}{|V_d|} \mathbf{1}_{V_d} = c \left( \mathbf{e}_v -
		\frac{1}{|V_d|} \mathbf{1}_{V_d} \right).
	\]
	Evaluating this equation at the $u$-th coordinate gives $1 - \frac{1}{|V_d|}
		= c \left(0 - \frac{1}{|V_d|}\right)$, which simplifies to $c = 1 -
		|V_d|$. Conversely, evaluating at the $v$-th coordinate gives
	$0 - \frac{1}{|V_d|} = c \left(1 - \frac{1}{|V_d|}\right)$, which simplifies
	to $c = \frac{-1}{|V_d|-1}$.

	Equating these two required values for $c$ yields $(|V_d| - 1)^2 = 1$. Since
	$|V_d| \geq 2$, this forces $|V_d| = 2$ (yielding $c = -1$). Therefore,
	$u$ and $v$ are parallel in the null space, and consequently parallel, if
	and only if $|V_d| = 2$.
\end{proof}

\begin{prop}\label{complete-parallel}
	Let $V_d \in \pi_{\mathcal{D}}$ be a cell of size $|V_d| \ge 2$ such that
	the induced subgraph $\Gamma(V_d) \cong K_{|V_d|}$. If $-1$ is not an
	eigenvalue of the associated quotient matrix, then any two distinct vertices
	in $V_d$ are parallel if and only if $|V_d| = 2$.
\end{prop}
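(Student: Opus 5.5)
The argument mirrors the proof of \cref{null-parallel}, with the roles of the eigenvalues $0$ and $-1$ exchanged. Let $u,v\in V_d$ be distinct true twins, and let $A=A(\Gamma(\mathbb{Z}_n))$. We treat separately the eigenvalues $\lambda\neq -1$ and the eigenvalue $\lambda=-1$.

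First take an eigenvalue $\lambda\neq -1$. Since $u$ and $v$ are true twins, the $u$-th and $v$-th rows of $A+I$ coincide. For any eigenvector $\mathbf{x}$ with eigenvalue $\lambda$, we have $(A+I)\mathbf{x}=(\lambda+1)\mathbf{x}$. Comparing the $u$-th and $v$-th entries gives
\[
(\lambda+1)\mathbf{e}_u^T\mathbf{x}=(\lambda+1)\mathbf{e}_v^T\mathbf{x}.
\]
Because $\lambda+1\neq 0$, this yields $\mathbf{e}_u^T\mathbf{x}=\mathbf{e}_v^T\mathbf{x}$. This holds for every vector in the eigenspace, so $E_\lambda\mathbf{e}_u=E_\lambda\mathbf{e}_v$. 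Hence parallelism can only fail at the eigenvalue $-1$. Note that $-1$ does occur in $\sigma(A)$, since $\mathbf{e}_u-\mathbf{e}_v$ is a $(-1)$-eigenvector.

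Now take $\lambda=-1$ and write
\[
\mathbf{e}_u=\mathbf{m}+\mathbf{z},\qquad \mathbf{m}=\tfrac{1}{|V_d|}\mathbf{1}_{V_d},\qquad \mathbf{z}=\mathbf{e}_u-\mathbf{m}.
\]
For any $w\in V_d$, the columns of $A+I$ indexed by $u$ and $w$ agree. Indeed, both equal $\mathbf{1}_{V_d}$ together with the common neighbours outside $V_d$, because $\Gamma(V_d)$ is complete. Since $\mathbf{z}$ is a combination of vectors $\mathbf{e}_u-\mathbf{e}_w$ with $w\in V_d$, it follows that $(A+I)\mathbf{z}=\mathbf{0}$, so $E_{-1}\mathbf{z}=\mathbf{z}$.

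The vector $\mathbf{m}$ lies in $\mathcal{C}(S)$ for the characteristic matrix $S$ of $\pi_{\mathcal{D}}$. By hypothesis, $-1$ is not an eigenvalue of the quotient matrix. \Cref{equitable_parallel} then shows that the $(-1)$-eigenspace is orthogonal to $\mathcal{C}(S)$, so $E_{-1}\mathbf{m}=\mathbf{0}$. (The quotient matrix $Q$ is similar to $C(\Upsilon_n)$, so the hypothesis can be read on either.) Consequently
\[
E_{-1}\mathbf{e}_u=\mathbf{e}_u-\tfrac{1}{|V_d|}\mathbf{1}_{V_d},
\]
exactly as in \eqref{charvec-projection}.

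Finally, suppose $E_{-1}\mathbf{e}_u=cE_{-1}\mathbf{e}_v$. Evaluating at the $u$ and $v$ coordinates gives $c=1-|V_d|$ and $c=-1/(|V_d|-1)$. Therefore $(|V_d|-1)^2=1$, which forces $|V_d|=2$ with $c=-1$. Conversely, if $|V_d|=2$, then $E_{-1}\mathbf{e}_u=-E_{-1}\mathbf{e}_v$, and together with the first step $u$ and $v$ are parallel.

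The only delicate point is the projection onto the $(-1)$-eigenspace. One must check that $\mathbf{z}$ lies entirely in that eigenspace; this uses the columns of $A+I$ rather than those of $A$, because $u\sim v$. One must also check that the quotient hypothesis is exactly what \cref{equitable_parallel} needs to remove the component $\mathbf{m}$.
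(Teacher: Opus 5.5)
Your proposal is correct and follows essentially the same route as the paper. For $\lambda\neq -1$ you use the twin-row identity (equivalently, the rows of $A+I$ indexed by $u$ and $v$ coincide) to get $E_\lambda\mathbf{e}_u=E_\lambda\mathbf{e}_v$; for $\lambda=-1$ you use the decomposition $\mathbf{e}_u=\mathbf{m}+\mathbf{z}$ with \cref{equitable_parallel} to get $E_{-1}\mathbf{e}_u=\mathbf{e}_u-\frac{1}{|V_d|}\mathbf{1}_{V_d}$, and you write out the coordinate comparison forcing $(|V_d|-1)^2=1$ that the paper only cites as ``the exact same support argument'' from \cref{null-parallel}.
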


\begin{proof}
	Proceeding analogously, the adjacency rows of true twins satisfy
	$(\mathbf{e}_u - \mathbf{e}_v)^T A = -(\mathbf{e}_u - \mathbf{e}_v)^T$.
	Right-multiplying by any eigenvector $\mathbf{x} \in \mathcal{E}_\lambda$
	yields $\lambda(\mathbf{e}_u - \mathbf{e}_v)^T \mathbf{x} = -(\mathbf{e}_u
		- \mathbf{e}_v)^T \mathbf{x}$, which simplifies to
	$(\lambda + 1)(\mathbf{e}_u - \mathbf{e}_v)^T \mathbf{x} = 0$. Consequently,
	for all $\lambda \neq -1$, we have $\mathbf{e}_u^T \mathbf{x} =
		\mathbf{e}_v^T \mathbf{x}$. Since this holds for every vector in the
	eigenspace, it guarantees $E_\lambda \mathbf{e}_u = E_\lambda \mathbf{e}_v$.
	For $\lambda = -1 $, using the same decomposition $\mathbf{e}_u =
		\mathbf{m} + \mathbf{z}$, where $\mathbf{m} = \frac{1}{|V_d|}
		\mathbf{1}_{V_d}$ and $\mathbf{z} = \mathbf{e}_u - \mathbf{m}$.
	Let $E_{-1}$ be the orthogonal projector onto the eigenspace $\mathcal{E}_{-1}$.
	Because $V_d$ forms a complete graph, we have $A\mathbf{z} = -\mathbf{z}$,
	meaning $E_{-1}\mathbf{z} = \mathbf{z}$. Furthermore, since $-1$ is not an
	eigenvalue of the quotient matrix, \cref{equitable_parallel} ensures
	$\mathcal{E}_{-1}\perp \mathcal{C}(S)$, forcing $E_{-1}\mathbf{m} =
		\mathbf{0}$. Thus, $E_{-1}\mathbf{e}_u = \mathbf{z}$. By the exact same
	support argument used previously, distinct vertices in $V_d$ are parallel if
	and only if $|V_d| = 2$.
\end{proof}

\begin{thm}\label{iff-strongly-cospectral}
Suppose that $-1$ is not an eigenvalue of the associated quotient matrix. Two
vertices $u$ and $v$ in $\Gamma(\mathbb{Z}_n)$ is strongly cospectral if and
only if they form a cell $V_d = \{u, v\}$ of $\pi_{\mathcal{D}}$ that is a twin
set of size $2$.
\end{thm}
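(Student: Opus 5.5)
We prove the two directions separately, relying on the twin structure of $\pi_{\mathcal{D}}$ (\cref{cell-vs-twins}) and the parallelism results \cref{null-parallel} and \cref{complete-parallel}.

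\emph{Sufficiency.} Suppose $V_d=\{u,v\}$ is a cell of size $2$. By \cref{cell-vs-twins}, $u$ and $v$ are twins.

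If $\Gamma(V_d)\cong\overline{K}_2$, the proof of \cref{null-parallel} gives $E_\lambda\mathbf e_u=E_\lambda\mathbf e_v$ for every $\lambda\neq 0$. For $\lambda=0$, equation \eqref{charvec-projection} gives $E_0\mathbf e_u=\mathbf e_u-\tfrac12\mathbf 1_{V_d}=\tfrac12(\mathbf e_u-\mathbf e_v)=-E_0\mathbf e_v$. Here we use that $0$ is not a quotient eigenvalue, which holds for $n\neq4$ by \cite[Theorem 3.5]{Bajaj2022On}. For $n=4$ the graph is $K_1$, so no cell of size $2$ occurs.

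If $\Gamma(V_d)\cong K_2$, the proof of \cref{complete-parallel} gives $E_\lambda\mathbf e_u=E_\lambda\mathbf e_v$ for $\lambda\neq-1$. For $\lambda=-1$, the standing hypothesis gives $E_{-1}\mathbf e_u=\tfrac12(\mathbf e_u-\mathbf e_v)=-E_{-1}\mathbf e_v$.

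In both cases $E_\lambda\mathbf e_u=\pm E_\lambda\mathbf e_v$ for every $\lambda$, which is strong cospectrality.

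\emph{Necessity.} Suppose $u$ and $v$ are strongly cospectral. Then $(E_\lambda)_{uu}=(E_\lambda)_{vv}$ for all $\lambda$, so $(A^k)_{uu}=(A^k)_{vv}$ for all $k$. Taking $k=2$ gives $\deg u=\deg v$. The proof of \cref{cell-vs-gcd} shows that degree determines $\gcd(x,n)$, so $u,v\in V_d$ for a common $d$. By \cref{cell-vs-twins} they are twins, and strong cospectrality implies parallelism. Hence \cref{null-parallel} (false twins) or \cref{complete-parallel} (true twins, using the hypothesis on $-1$) forces $|V_d|=2$, so $V_d=\{u,v\}$. The hypothesis $|V_d|\ge2$ in those results holds because $u\ne v$.

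\emph{Main obstacle.} The delicate step is the null-space case in sufficiency, because it needs $0\notin\sigma(C(\Upsilon_n))$, the quotient spectrum. I would justify this by citing \cite[Theorem 3.5]{Bajaj2022On}, and treat $n=4$ separately since its vertex set has size $1$. I would also check that the degree argument in \cref{cell-vs-gcd} applies to any two vertices, and not only to vertices lying in a common cell of an equitable partition. It does: that proof only uses equality of degrees.
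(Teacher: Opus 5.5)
Your proof is correct. Sufficiency is essentially the paper's argument. The paper gets parallelism from \cref{null-parallel} and \cref{complete-parallel}, then cites \cite[Corollary 3.8]{Monterde2022Strong} to pass from parallel twins to strongly cospectral ones. You instead read $E_\lambda\mathbf{e}_u=\pm E_\lambda\mathbf{e}_v$ directly off the same projection computations. That is self-contained, and it makes explicit the use of $0\notin\sigma(C(\Upsilon_n))$ for $n\neq 4$, which the paper leaves implicit.

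The real difference is in necessity, where $u$ and $v$ might lie in distinct cells. The paper invokes \cite[Theorem 3.9(2)]{Monterde2022Strong}: a vertex of a twin set cannot be parallel to a vertex outside it. You note instead that strong cospectrality gives cospectrality, hence $\deg u=(A^2)_{uu}=(A^2)_{vv}=\deg v$. The degree formula in the proof of \cref{cell-vs-gcd} then shows that degree determines $\gcd(x,n)$, so $u$ and $v$ share a cell.

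Your route is more elementary, and it avoids a point the paper glosses over. Monterde's result requires at least one of $u,v$ to belong to a twin set. That holds here only because $\varphi(n/d)=1$ forces $d=n/2$, so $\pi_{\mathcal{D}}$ has at most one singleton cell, and the paper never states this. In exchange, the paper's argument does not need degrees to separate the cells, whereas yours relies on that arithmetic feature of $\Gamma(\mathbb{Z}_n)$.
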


\begin{proof}
    Suppose that $u$ and $v$ form a cell $V_d$ of size $2$ of the equitable partition
    $\pi_{\mathcal{D}}$. If $V_d$ is a set of false twins, then $u$ and $v$ are
        parallel by \cref{null-parallel}. Analogously, provided that $-1$ is not
        an eigenvalue of the quotient matrix, if $V_d$ consists of true
        twins, then $u$ and $v$ are parallel by \cref{complete-parallel}.
        According to \cite[Corollary 3.8]{Monterde2022Strong}, $u$ and $v$ are
        strongly cospectral.

    Conversely, suppose that the vertices $u$ and $v$ in $\Gamma(\mathbb{Z}_n)$
    are strongly cospectral. We must show that they form a cell $V_d
    = \{u, v\}$ of $\pi_{\mathcal{D}}$ that is a twin set of size $2$.

    If $u$ and $v$ belong to the same cell $V_d \in \pi_{\mathcal{D}}$, then
    they are pairwise twins. In this case, the results of \cref{null-parallel}
    and \cref{complete-parallel} require that $|V_d| = 2$ for parallelism. Thus,
    the claim follows.

    Suppose, for the sake of contradiction, that $u$ and $v$ belong to distinct
    cells $V_{d_i}$ and $V_{d_j}$ of $\pi_{\mathcal{D}}$. By
    \cref{cell-vs-twins}, vertices in distinct cells of $\pi_{\mathcal{D}}$
    cannot be twins. By \cite[Theorem 3.9(2)]{Monterde2022Strong}, the
    projections of $u$ and $v$ onto the twin eigenspace $E_{\theta}$ (where
    $\theta \in \{0, -1\}$) are not proportional for any $c \in \mathbb{R}$.
    Consequently, $u$ and $v$ are not parallel, which contradicts the requirement
    for strong cospectrality. This shows that $u$ and $v$ must stay in the same
    cell $V_d$.
\end{proof}

We now show how the induced subgraph $\Gamma(V_d)$ restricts the spectral
support of the vertices in $V_d$, showing that certain eigenvalues are
necessarily excluded.

\begin{thm}\label{notin-complete-support}
	Let $V_d$ be a cell of $\pi_{\mathcal{D}}$ such that $\Gamma(V_d) \cong
    K_{|V_d|}$, and suppose that $0 \in \sigma(\Gamma(\mathbb{Z}_n))\setminus
    \sigma(C(\Upsilon_n))$. Then
	for any $u \in V_d$, $E_0\mathbf{e}_u = \mathbf{0}$; that is, $0 \notin
		\Phi_u$.
\end{thm}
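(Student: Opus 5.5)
We decompose $\mathbf{e}_u$ along the equitable partition $\pi_{\mathcal{D}}$ exactly as in the proofs of \cref{null-parallel} and \cref{complete-parallel}:
\[
\mathbf{e}_u = \mathbf{m} + \mathbf{z}, \qquad \mathbf{m} = \frac{1}{|V_d|}\mathbf{1}_{V_d}, \qquad \mathbf{z} = \mathbf{e}_u - \mathbf{m}.
\]
Then $E_0\mathbf{e}_u = E_0\mathbf{m} + E_0\mathbf{z}$, and we show that each term vanishes.

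\textbf{The term $E_0\mathbf{m}$.} The vector $\mathbf{m}$ is constant on the cells of $\pi_{\mathcal{D}}$, so it lies in $\mathcal{C}(S)$. By hypothesis, $0$ is an eigenvalue of $A$ but not of $C(\Upsilon_n)$. Since $C(\Upsilon_n)$ is similar to the quotient matrix $Q$, the value $0$ is not an eigenvalue of $Q$ either. By \cref{equitable_parallel}, every vector in $\mathcal{N}(A)$ sums to zero on every cell, so $\mathcal{N}(A) \perp \mathcal{C}(S)$. Hence $E_0\mathbf{m} = \mathbf{0}$.

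\textbf{The term $E_0\mathbf{z}$.} If $|V_d| = 1$, then $\mathbf{z} = \mathbf{0}$ and there is nothing to prove. Otherwise, all vertices of $V_d$ are true twins, since $\Gamma(V_d)$ is complete. Their columns in $A$ therefore differ only through the clique $J - I$ on $V_d$. Concretely, for $w, w' \in V_d$,
\[
A(\mathbf{e}_w - \mathbf{e}_{w'}) = -(\mathbf{e}_w - \mathbf{e}_{w'}).
\]
The vector $\mathbf{z}$ is a combination of such differences, because it sums to zero and is supported on $V_d$. So $A\mathbf{z} = -\mathbf{z}$, i.e.\ $\mathbf{z}$ lies in the $(-1)$-eigenspace. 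Eigenspaces of the symmetric matrix $A$ for distinct eigenvalues are orthogonal, so $E_0\mathbf{z} = \mathbf{0}$.

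Combining the two steps gives $E_0\mathbf{e}_u = \mathbf{0}$, that is, $0 \notin \Phi_u$.

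The only delicate point is justifying $A\mathbf{z} = -\mathbf{z}$ cleanly. It suffices to check the two twin relations: $A\mathbf{e}_u - A\mathbf{e}_w = \mathbf{e}_w - \mathbf{e}_u$ for $w \in V_d$, which holds because $u$ and $w$ share all neighbours outside $\{u, w\}$ and are adjacent to each other. Then express $\mathbf{z} = \frac{1}{|V_d|}\sum_{w \in V_d}(\mathbf{e}_u - \mathbf{e}_w)$ and apply linearity.
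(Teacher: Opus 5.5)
Your proposal is correct and follows the paper's proof essentially step for step: the same splitting $\mathbf{e}_u = \mathbf{m} + \mathbf{z}$, with $E_0\mathbf{m} = \mathbf{0}$ coming from \cref{equitable_parallel} and $E_0\mathbf{z} = \mathbf{0}$ because $A\mathbf{z} = -\mathbf{z}$. What you add is detail the paper leaves implicit: the explicit twin-relation check that $A\mathbf{z} = -\mathbf{z}$, the remark that $C(\Upsilon_n)$ is similar to $Q$, and the singleton case $|V_d| = 1$.
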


\begin{proof}
	As in previous proofs, we decompose the characteristic vector as
	$\mathbf{e}_u = \mathbf{m} + \mathbf{z}$, where $\mathbf{m} \in \mathcal{C}(S)$
	and $\mathbf{z}$ is a zero-sum vector supported entirely on $V_d$. Let $E_0$
	denote the orthogonal projector onto $\mathcal{N}(A)$. Because $\Gamma(V_d)
		\cong K_{V_d}$, we have $A\mathbf{z} = -\mathbf{z}$, which implies
	$E_0\mathbf{z} = \mathbf{0}$. Furthermore, since $0$ is not an eigenvalue of
	the quotient matrix, we have $E_0\mathbf{m} = \mathbf{0}$. Therefore,
	$E_0\mathbf{e}_{u} = \mathbf{0}$.
\end{proof}

\begin{prop}\label{notin-null-support}
	Let $V_d$ be a cell of $\pi_{\mathcal{D}}$ such that $\Gamma(V_d) \cong
		\overline{K}_{|V_d|}$. Suppose that  $-1 \in \sigma(\Gamma(\mathbb{Z}_n))
		\setminus \sigma(C(\Upsilon_n))$. Then for any $u
		\in V_d$, $E_{-1}\mathbf{e}_u = \mathbf{0}$; that is, $-1 \notin \Phi_u$.
\end{prop}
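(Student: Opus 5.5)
The plan is to mirror the proof of \cref{notin-complete-support}, exchanging the roles of the eigenvalues $0$ and $-1$. Fix $u \in V_d$ and let $S$ be the characteristic matrix of $\pi_{\mathcal{D}}$. Decompose the characteristic vector as
\[
	\mathbf{e}_u = \mathbf{m} + \mathbf{z}, \qquad \mathbf{m} = \frac{1}{|V_d|}\mathbf{1}_{V_d}, \quad \mathbf{z} = \mathbf{e}_u - \frac{1}{|V_d|}\mathbf{1}_{V_d}.
\]
Here $\mathbf{m}$ is constant on every cell of $\pi_{\mathcal{D}}$, so $\mathbf{m} \in \mathcal{C}(S)$. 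The vector $\mathbf{z}$ is supported on $V_d$ and sums to zero there. If $|V_d| = 1$, then $\mathbf{z} = \mathbf{0}$, and only the $\mathbf{m}$-part needs attention.

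First, I would show that $E_{-1}\mathbf{m} = \mathbf{0}$. By hypothesis $-1$ is an eigenvalue of $A$ but not of $C(\Upsilon_n)$. Since $C(\Upsilon_n) = DQD^{-1}$ is similar to the quotient matrix $Q$, the value $-1$ is also not an eigenvalue of $Q$. \cref{equitable_parallel} then shows that every eigenvector for $-1$ sums to zero on each cell, so $\mathcal{E}_{-1} \perp \mathcal{C}(S)$. Hence the orthogonal projection $E_{-1}\mathbf{m}$ vanishes.

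Second, I would show that $E_{-1}\mathbf{z} = \mathbf{0}$. The vertices of $V_d$ are false twins, so the columns of $A$ indexed by $V_d$ coincide. Writing $\mathbf{z} = \sum_{w \in V_d} z_w \mathbf{e}_w$ with $\sum_w z_w = 0$, this gives $A\mathbf{z} = \bigl(\sum_w z_w\bigr) A\mathbf{e}_u = \mathbf{0}$. Thus $\mathbf{z}$ lies in the $0$-eigenspace, which is orthogonal to $\mathcal{E}_{-1}$ because $A$ is symmetric, and so $E_{-1}\mathbf{z} = \mathbf{0}$. Adding the two pieces gives $E_{-1}\mathbf{e}_u = \mathbf{0}$, that is, $-1 \notin \Phi_u$.

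No step is a serious obstacle. The only point needing care is the first step, specifically the passage from the symmetrized matrix $C(\Upsilon_n)$ to the quotient matrix $Q$ required by \cref{equitable_parallel}. That passage is immediate from the similarity $C = DQD^{-1}$.
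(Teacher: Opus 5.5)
Your proposal is correct and follows essentially the same route as the paper. The paper also uses the decomposition $\mathbf{e}_u = \mathbf{m} + \mathbf{z}$, gets $E_{-1}\mathbf{z} = \mathbf{0}$ from $A\mathbf{z} = \mathbf{0}$, and gets $E_{-1}\mathbf{m} = \mathbf{0}$ from \cref{equitable_parallel}. Your extra details are all sound and merely make explicit what the paper leaves implicit: the similarity $C = DQD^{-1}$, the equal columns of false twins, the orthogonality of the $0$- and $(-1)$-eigenspaces, and the trivial case $|V_d| = 1$.
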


\begin{proof}
	The proof proceeds analogously. Using the same decomposition $\mathbf{e}_u =
		\mathbf{m} + \mathbf{z}$, the condition $\Gamma(V_d) \cong
		\overline{K}_{|V_d|}$ gives $A\mathbf{z} = \mathbf{0}$, yielding
	$E_{-1}\mathbf{z} = \mathbf{0}$. Furthermore, since $-1$ is not an
	eigenvalue of the quotient matrix, we have $E_{-1}\mathbf{m} = \mathbf{0}$.
	Therefore, $E_{-1}\mathbf{e}_u = \mathbf{0}$.
\end{proof}

\begin{rmk}\label{no-0-and-1}
Suppose that $|V(\Gamma(\mathbb{Z}_n))| > 1$ and let $V_d = \{u\}$ be a
singleton cell of $\pi_{\mathcal{D}}$. Provided that $-1 \notin
\sigma(\Gamma(\mathbb{Z}_n))$, the support of $u$ contains neither $0$ nor $-1$,
if they exist in the spectrum of $\Gamma(\mathbb{Z}_n)$. Indeed, this
follows from \cref{charvec-projection} with $|V_{d}| = 1$, together with 
\cref{notin-null-support} and \cref{notin-complete-support}.
\end{rmk}

Suppose that $V(\Gamma(\mathbb{Z}_n)) \ge 3$. Since $\Gamma(\mathbb{Z}_n)$ is connected
and the proof of \cref{null-parallel} shows that $\mathbf{e}_u - \mathbf{e}_v$
is an eigenvector for the eigenvalue $\theta = 0$ corresponding to the strongly
cospectral false twins $u$ and $v$, it follows from
\cite[Theorem 3.4]{Monterde2022Strong} that $|\Phi_u| \ge 3$ and
$|\Phi_{u, v}^{-}| = 1$. Consequently, the set $\Phi_{u, v}^+$ must contain at
least $2$ distinct eigenvalues, yielding the decomposition
\[
	\Phi_u = \Phi_{u, v}^{-} \cup \Phi_{u, v}^{+}
	= \{0\} \cup \{\lambda_1, \dots, \lambda_r\},
\]
where $r \ge 2$ and $\lambda_1 > \lambda_2$.

Analogously, for true twins (where $\theta = -1$), provided that $-1 \notin
	\sigma(C(\Upsilon_n))$, \cref{complete-parallel} and
\cite[Theorem 3.4]{Monterde2022Strong} establish the similarly decomposition
\[
	\Phi_u = \Phi_{u, v}^{-} \cup \Phi_{u, v}^{+}
	= \{-1\} \cup \{\lambda_1, \dots, \lambda_r\}.
\]

Proper FR is characterized through the eigenvalue support of twin vertices in
the following results due to Monterde.

\begin{thm}\cite[Theorem 6]{Monterde2023Fractional}\label{proper-fr-symmetry}
	Let $\phi(A(G), x) \in \mathbb{Z}[x]$, and suppose that $u$ and $v$ are twins in $G$.
	Proper fractional revival occurs between $u$ and $v$ if and only if they are
	strongly cospectral and one of the following conditions holds:
	\begin{enumerate}
		\item All elements in $\Phi_{uv}^+$ have the form $\frac{1}{2}(2\theta
			      + b_j\sqrt{\Delta})$, where $b_j$ is even and either
		      $\Delta = 1$ or $\Delta > 1$ is square-free, and $g \nmid
			      \frac{\lambda_1 - \theta}{\sqrt{\Delta}},$
		      where $g = \gcd\left( \frac{\lambda_1 - \lambda_2}{\sqrt{\Delta}},
			      \dots, \frac{\lambda_1 - \lambda_r}{\sqrt{\Delta}} \right)$.

		\item All elements in $\Phi_{uv}^+$ have the form
		      $\frac{1}{2}(a + b_j\sqrt{\Delta})$, where $a \neq 2\theta$,
		      $b_j$ is even, and $\Delta > 1$ is square-free.
	\end{enumerate}
\end{thm}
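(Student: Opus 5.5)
This statement is cited from \cite[Theorem 6]{Monterde2023Fractional} and is not proved here. To reconstruct it, I would rewrite the fractional revival condition spectrally and then turn it into arithmetic conditions on the eigenvalue support.

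\textbf{Step 1: reduce to a phase condition.} Proper FR from $u$ to $v$ at time $\tau$ means $H(\tau)\mathbf{e}_u=\alpha\mathbf{e}_u+\beta\mathbf{e}_v$ with $\beta\neq 0$. Since $u$ and $v$ are twins, $\mathbf{e}_u-\mathbf{e}_v$ is a $\theta$-eigenvector. I would first recall the standard fact that proper FR forces $u$ and $v$ to be strongly cospectral; this uses the symmetry \cite[Proposition 4.1]{Chan2019Quantum} together with unitarity. I would then decompose $\mathbf{e}_u=\tfrac12(\mathbf{e}_u+\mathbf{e}_v)+\tfrac12(\mathbf{e}_u-\mathbf{e}_v)$.

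For twins the minus part is exactly the $\theta$-eigenvector, so $\Phi_{uv}^-=\{\theta\}$. Hence $H(\tau)\mathbf{e}_u=\tfrac12 H(\tau)(\mathbf{e}_u+\mathbf{e}_v)+\tfrac12 e^{i\tau\theta}(\mathbf{e}_u-\mathbf{e}_v)$. Under strong cospectrality, proper FR is then equivalent to two requirements:
\begin{itemize}
\item $\mathbf{e}_u+\mathbf{e}_v$ is periodic up to a phase $\gamma$, that is, $e^{i\tau\lambda}=\gamma$ for all $\lambda\in\Phi_{uv}^+$;
\item $\gamma\neq e^{i\tau\theta}$, which is exactly the condition $\beta=\tfrac12(\gamma-e^{i\tau\theta})\neq 0$.
\end{itemize}

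\textbf{Step 2: apply the ratio condition.} The first requirement says that $\tau(\lambda_1-\lambda_j)\in 2\pi\mathbb{Z}$ for all $j$. Since $\phi(A(G),x)\in\mathbb{Z}[x]$, the support $\Phi_{uv}^+$ is closed under algebraic conjugation, because $E_\lambda(\mathbf{e}_u+\mathbf{e}_v)$ transforms compatibly under Galois automorphisms. The usual Godsil ratio argument then applies: the ratios $(\lambda_1-\lambda_j)/(\lambda_1-\lambda_k)$ are rational. This forces every element of $\Phi_{uv}^+$ to have the form $\tfrac12(a+b_j\sqrt\Delta)$ with $a$ common, $\Delta$ square-free, and $b_j$ of the same parity. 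Using $r\ge 2$ together with algebraic integrality, this parity can be normalized to "$b_j$ even".

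\textbf{Step 3: case split on $\theta$.}
\begin{itemize}
\item \emph{Case $a=2\theta$.} Here $\theta$ lies in the same coset, and the phase inequality becomes $\tau(\lambda_1-\theta)\notin 2\pi\mathbb{Z}$. With the minimal time $\tau=2\pi/(g\sqrt\Delta)$, this reads $g\nmid(\lambda_1-\theta)/\sqrt\Delta$, which is condition (1).
\item \emph{Case $a\neq 2\theta$.} Here $\theta$ is an integer (it equals $\omega-\eta$), and $\Phi_{uv}^+$ contains irrational conjugates, so $\Delta>1$. I would show that the irrationality of $(\lambda_1-\theta)/\sqrt\Delta$ allows some multiple of the minimal period to satisfy the inequality; this gives condition (2) automatically.
\end{itemize}

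\textbf{Step 4: converse.} For the converse, I would choose $\tau$ as above and verify both requirements directly.

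\textbf{Main obstacle.} The hardest step is Case 2, where no divisibility condition appears. I would need to show that $\tau(\lambda_1-\theta)\in 2\pi\mathbb{Z}$ cannot hold for all admissible $\tau=2\pi k/(g\sqrt\Delta)$. The key point is that $\lambda_1-\theta=\tfrac12(a-2\theta)+\tfrac{b_1}{2}\sqrt\Delta$ has a nonzero rational part, so $(\lambda_1-\theta)/\sqrt\Delta$ is irrational. I expect this to settle the case, but the parity bookkeeping for the $b_j$ needs care.
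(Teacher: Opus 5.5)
Your Step 2 has a false step: you claim that, using $r\ge 2$ and algebraic integrality, the parity of the $b_j$ ``can be normalized'' to even. For square-free $\Delta>1$, the number $\frac12(a+b_j\sqrt{\Delta})$ is an algebraic integer only if $b_j\equiv a\pmod 2$, with both even unless $\Delta\equiv 1\pmod 4$. No rewriting with a square-free $\Delta$ changes the parity of $b_j$, and $r$ plays no role. So $b_j$ is even exactly when $a$ is.

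In case (1) this is harmless, because $a=2\theta$ is even. In case (2), however, $a$ can be odd, and this really occurs for twins. Take $K_4$ minus the edge $uv$, and call the other two vertices $w_1\sim w_2$.
\begin{itemize}
\item $u,v$ are false twins, so $\theta=0$.
\item The partition $\{\{u,v\},\{w_1,w_2\}\}$ is equitable with quotient $\left(\begin{smallmatrix}0&2\\2&1\end{smallmatrix}\right)$. Hence $\Phi_{uv}^+=\{\frac12(1\pm\sqrt{17})\}$ and $\Phi_{uv}^-=\{0\}$; the eigenvalue $-1$ has eigenvector $\mathbf{e}_{w_1}-\mathbf{e}_{w_2}$, which is orthogonal to $\mathbf{e}_u$ and $\mathbf{e}_v$.
\item At $\tau=2\pi/\sqrt{17}$, your Step 1 gives $H(\tau)\mathbf{e}_u=\frac{\gamma+1}{2}\mathbf{e}_u+\frac{\gamma-1}{2}\mathbf{e}_v$ with $\gamma=-e^{i\pi/\sqrt{17}}\neq 1$. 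This is proper FR.
\item Yet (1) fails, since it needs $a=0$, and (2) with even $b_j$ fails, since here $b_j=\pm1$.
\end{itemize}
So the ``only if'' direction of the statement as transcribed in this paper cannot be proved, and your normalization sentence is exactly where any proof of it must break. If you delete that sentence, your outline proves the corrected statement: in (2), require only $a,b_j\in\mathbb{Z}$ (necessarily $b_j\equiv a\pmod 2$). You should state and prove that form instead.

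Two smaller remarks. First, the step you flag as the main obstacle is immediate. With $\tau=2\pi/(g\sqrt{\Delta})$ one has $\tau(\lambda_1-\theta)/(2\pi)=\frac{b_1}{2g}+\frac{a-2\theta}{2g\sqrt{\Delta}}$, which is irrational when $a\neq 2\theta$ and $\Delta>1$, so $k=1$ already works and no parity bookkeeping is needed. Second, proper FR does not force strong cospectrality for arbitrary vertex pairs, so it is not a ``standard fact'' in that generality. For twins it follows directly from your own decomposition. $H(\tau)(\mathbf{e}_u+\mathbf{e}_v)$ lies in $\mathrm{span}\{\mathbf{e}_u,\mathbf{e}_v\}$ and is orthogonal to $\mathbf{e}_u-\mathbf{e}_v$, so it equals $\gamma(\mathbf{e}_u+\mathbf{e}_v)$. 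If $E_\theta(\mathbf{e}_u+\mathbf{e}_v)\neq\mathbf{0}$, this forces $\gamma=e^{i\tau\theta}$, hence $\beta=0$.
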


If $(\alpha, \beta)$-FR occurs between twin vertices $u$ and $v$ at time $\tau$,
there exist $\zeta, \gamma \in \mathbb{R}$ such that $\alpha =
	e^{i\zeta}\cos\gamma$ and $\beta = i e^{i\zeta}\sin\gamma$ \cite[Proposition
	5.1]{Chan2019Quantum}. The following corollary determines the specific values
of $\tau$ and $\gamma$ required for proper FR to occur.

\begin{cor}\cite[Corollary 4]{Monterde2023Fractional}\label{proper-fr-time}
	Suppose $u$ and $v$ are twins in $G$ that admit proper
	$(e^{i\zeta}\cos\gamma, e^{i\zeta}\sin\gamma)$-fractional revival at time
	$\tau$, then $\Phi_{u, v}^{+}$ satisfies the ratio condition. Specifically,
	if $p_j$ and $q_j$ are coprime integers such that
	$\frac{\lambda_1 - \lambda_j}{\lambda_1 - \lambda_2} = \frac{p_j}{q_j}$,
	then there exists an integer $k$ such that:
	\[
		\tau = \frac{2\pi qk}{\lambda_1 - \lambda_2}\quad\text{and}\quad
		\gamma\equiv qk\left(\frac{\lambda_1 - \theta}{\lambda_1
			-\lambda_2}\right)\pi \pmod \pi,
	\]
	where $q = \text{lcm}(q_2, \dots, q_n)$ and
	$q\left(\frac{\lambda_1 - \theta}{\lambda_1 - \lambda_2}\right)$ is not an
	integer.
\end{cor}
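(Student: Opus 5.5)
The corollary is quoted from Monterde, so my plan is to rederive it from the spectral expansion of $H(\tau)$ restricted to the twin pair. Since $u,v$ are twins, $\mathbf{e}_u-\mathbf{e}_v$ is an eigenvector with eigenvalue $\theta$. By the discussion preceding \cref{proper-fr-symmetry}, proper FR forces strong cospectrality with $\Phi_{u,v}^-=\{\theta\}$ and $\Phi_{u,v}^+=\{\lambda_1,\dots,\lambda_r\}$.

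First I would write
\[
H(\tau)\mathbf{e}_u=\sum_{\lambda\in\Phi_u}e^{i\tau\lambda}E_\lambda\mathbf{e}_u .
\]
Then I would split $\mathbf{e}_u=\tfrac12(\mathbf{e}_u+\mathbf{e}_v)+\tfrac12(\mathbf{e}_u-\mathbf{e}_v)$. The second summand lies in the $\theta$-eigenspace. The first summand is supported on $\Phi_{u,v}^+$ and satisfies $E_\lambda\mathbf{e}_u=E_\lambda\mathbf{e}_v$ for those $\lambda$. Hence
\[
H(\tau)\mathbf{e}_u=\tfrac12\sum_{j}e^{i\tau\lambda_j}E_{\lambda_j}(\mathbf{e}_u+\mathbf{e}_v)+\tfrac12 e^{i\tau\theta}(\mathbf{e}_u-\mathbf{e}_v).
\]

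Next I would impose the FR condition that $H(\tau)\mathbf{e}_u$ lies in $\mathrm{span}\{\mathbf{e}_u,\mathbf{e}_v\}$. Projecting onto each $E_{\lambda_j}$ and using that $E_{\lambda_j}(\mathbf{e}_u+\mathbf{e}_v)\neq 0$, this forces $e^{i\tau\lambda_j}=e^{i\tau\lambda_1}=:e^{i\mu}$ for all $j$. It then gives
\[
H(\tau)\mathbf{e}_u=\tfrac12 e^{i\mu}(\mathbf{e}_u+\mathbf{e}_v)+\tfrac12 e^{i\tau\theta}(\mathbf{e}_u-\mathbf{e}_v).
\]
So $\alpha=\tfrac12(e^{i\mu}+e^{i\tau\theta})$ and $\beta=\tfrac12(e^{i\mu}-e^{i\tau\theta})$.

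The condition $e^{i\tau(\lambda_1-\lambda_j)}=1$ for all $j$ says $\tau(\lambda_1-\lambda_j)\in 2\pi\mathbb{Z}$. Hence the ratios $(\lambda_1-\lambda_j)/(\lambda_1-\lambda_2)=p_j/q_j$ are rational; this is the ratio condition. Writing $\tau(\lambda_1-\lambda_2)=2\pi m$, integrality of $m p_j/q_j$ for coprime $p_j,q_j$ forces $q_j\mid m$, so $q\mid m$. This gives $\tau=2\pi qk/(\lambda_1-\lambda_2)$.

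For $\gamma$, I would factor
\[
\alpha=e^{i(\mu+\tau\theta)/2}\cos\!\big(\tau(\lambda_1-\theta)/2\big),\qquad \beta=i\,e^{i(\mu+\tau\theta)/2}\sin\!\big(\tau(\lambda_1-\theta)/2\big).
\]
This yields $\gamma\equiv\tau(\lambda_1-\theta)/2=qk\pi(\lambda_1-\theta)/(\lambda_1-\lambda_2)$, modulo $\pi$ up to the sign and phase conventions.

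Properness means $\beta\neq0$, i.e. $\sin\gamma\neq0$, which says $qk(\lambda_1-\theta)/(\lambda_1-\lambda_2)\notin\mathbb{Z}$. In particular $q(\lambda_1-\theta)/(\lambda_1-\lambda_2)$ cannot be an integer, since otherwise every multiple by $k$ would be too.

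The main obstacle I expect is the bookkeeping of phases and conventions. The stated $\beta=e^{i\zeta}\sin\gamma$ differs from the $ie^{i\zeta}\sin\gamma$ form by a factor $i$, which shifts $\zeta$ but not $\gamma$ mod $\pi$, and the sign ambiguity of $\gamma$ must be absorbed. I would also need to justify that each $E_{\lambda_j}(\mathbf{e}_u+\mathbf{e}_v)$ is nonzero and that these components are mutually orthogonal and orthogonal to $\mathbf{e}_u-\mathbf{e}_v$. Given the orthogonality of the spectral idempotents, this step is routine.
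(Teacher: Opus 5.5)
Your proposal is correct and follows the standard derivation behind this result. The paper gives no proof of its own and only cites Monterde's Corollary 4, which rests on the same mechanism: for twins with $\Phi_{u,v}^-=\{\theta\}$, fractional revival at time $\tau$ holds exactly when $e^{i\tau\lambda}$ is constant on $\Phi_{u,v}^+$; the lcm argument then gives $\tau$, and the half-angle factorization gives $\gamma$.

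One correction to your remark on conventions. The missing factor $i$ in the quoted amplitude $e^{i\zeta}\sin\gamma$ cannot be absorbed into $\zeta$, because $\zeta$ is a common phase of $\alpha$ and $\beta$. Your own computation shows that $\beta/\alpha$ is purely imaginary whenever $\alpha\neq 0$. So the quoted form is a typo for $ie^{i\zeta}\sin\gamma$, which is the form stated just before the corollary. Under that form, $\gamma$ is determined exactly modulo $\pi$, with no further sign ambiguity to absorb.
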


\begin{ex}
	Consider the graph $\Gamma(\mathbb{Z}_{21})$. By \cref{has-pst}, we know PST
	occurs between vertices $7$ and $14$ at time some time $\tau$. Since $\Phi_{7,
			14}^{+} = \{\lambda_1, \lambda_2\} = \{\sqrt{12}, -\sqrt{12}\}$ and
	$\Phi_{7, 14}^{-} = \{\theta\} = \{0\}$, it follows that $q = 1$. With
	$q = 1$ and $k = 1$, we have $\tau = \frac{\pi}{\sqrt{12}}$ and
	$\gamma = \frac{\pi}{2} \pmod \pi$.
\end{ex}

The following results by Monterde provide a characterization of periodicity.
For the sake of completeness, we restate the portion regarding the periodicity of
twin vertices below.

\begin{thm}\cite[Theorem 8]{Monterde2023Fractional}\label{periodic-symmetry}
	Let $\phi(A(G), x) \in \mathbb{Z}[x]$ and suppose $u$ and $v$ are twins in
	$G$ that admit fractional revival. Vertices $u$ and $v$ are periodic if
	and only if all elements in $\Phi_{u, v}^{+}$ have the form
	$\frac{1}{2}(2\theta + b_j\sqrt{\Delta})$, where $b_j$ is even
	and either $\Delta = 1$ or $\Delta > 1$ is square-free.
\end{thm}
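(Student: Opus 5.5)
The plan is to reduce periodicity at $u$ to a condition on the eigenvalue support $\Phi_u$, and then use the twin structure to pin down the shape of that support.

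Since $u$ and $v$ are twins admitting fractional revival, they are strongly cospectral. This holds for proper FR by \cite[Lemma 4]{Monterde2023Fractional}; if the FR is not proper, I take strong cospectrality of the twin pair as part of the hypothesis, as in the surrounding discussion. Hence $\Phi_u=\Phi_v=\Phi_{u,v}^{-}\cup\Phi_{u,v}^{+}$. As noted before \cref{proper-fr-symmetry}, $\mathbf{e}_u-\mathbf{e}_v$ is a $\theta$-eigenvector and $\Phi_{u,v}^{-}=\{\theta\}$, so
\[
\Phi_u=\{\theta\}\cup\Phi_{u,v}^{+}.
\]
Because $u,v$ are strongly cospectral, periodicity at $u$ and at $v$ are equivalent, and both are equivalent to periodicity of the pair.

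Next, periodicity at $u$ means $H(\tau)_{u,u}=\sum_{\lambda\in\Phi_u}e^{i\tau\lambda}(E_\lambda)_{u,u}$ has modulus $1$ for some $\tau>0$. The weights $(E_\lambda)_{u,u}$ are positive and sum to $1$, so this happens exactly when all $e^{i\tau\lambda}$, $\lambda\in\Phi_u$, coincide. I will then invoke Godsil's characterization of periodic vertices for graphs with $\phi(A(G),x)\in\mathbb{Z}[x]$ (the ratio condition together with the fact that $\Phi_u$ is closed under algebraic conjugation). It says $u$ is periodic if and only if either every element of $\Phi_u$ is an integer, or every element has the form $\frac12(a+b_j\sqrt{\Delta})$ with a fixed integer $a$ and a fixed square-free $\Delta>1$. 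This is the one outside ingredient, and it is where most of the work lies.

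\emph{Forward direction.} Suppose $u$ is periodic. Since $\theta\in\{0,-1\}$ lies in $\Phi_u$ and is an integer, the common rational part is forced: $a/2=\theta$, so $a=2\theta$. Each $\lambda\in\Phi_{u,v}^{+}$ is an algebraic integer, and $\frac12(a+b\sqrt{\Delta})$ is an algebraic integer only when $a\equiv b\pmod 2$. With $a=2\theta$ even, this makes every $b_j$ even. In the integral case I set $\Delta=1$ and write $\lambda=\frac12(2\theta+b_j)$ with $b_j=2(\lambda-\theta)$, which is even.

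\emph{Converse.} If every element of $\Phi_{u,v}^{+}$ has the stated form, then every difference $\lambda-\theta$ with $\lambda\in\Phi_u$ is an integer multiple of $\sqrt{\Delta}$. Taking $\tau=2\pi/\sqrt{\Delta}$ gives $e^{i\tau\lambda}=e^{i\tau\theta}$ for all $\lambda\in\Phi_u$, so $|H(\tau)_{u,u}|=1$ and $u$ (hence the pair) is periodic.

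The main obstacle is the forward step. It relies on Godsil's theorem that the ratio condition plus integrality of the characteristic polynomial forces either an all-integer support or a single quadratic field with a common rational part. I will cite that result rather than reprove it. The remaining risk is the parity bookkeeping that makes the $b_j$ even, and it goes through because $\theta$ is itself in the support.
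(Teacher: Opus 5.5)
The paper does not prove this statement; it quotes it from Monterde without proof, so there is no in-paper argument to compare yours against.

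Your derivation is correct and follows the standard route:
\begin{enumerate}
\item Because the weights $(E_\lambda)_{u,u}$ are positive, $u$ is periodic exactly when all phases $e^{i\tau\lambda}$, $\lambda\in\Phi_u$, coincide.
\item Godsil's periodicity theorem then places $\Phi_u$ either inside $\mathbb{Z}$ or inside a single family $\frac12(a+b_j\sqrt{\Delta})$.
\item The integer $\theta\in\Phi_u$ forces $a=2\theta$.
\item Integrality of $\theta+\frac{b_j}{2}\sqrt{\Delta}$ with $\Delta$ square-free forces $b_j$ to be even.
\item The converse at $\tau=2\pi/\sqrt{\Delta}$ is immediate.
\end{enumerate}
Reading ``admit fractional revival'' as proper FR, or at least assuming strong cospectrality, is also right, since $\Phi_{u,v}^{+}$ is only defined under that assumption.

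Two points should be justified at the generality the statement needs.

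First, you cite $\theta\in\Phi_u$ and $\Phi_{u,v}^{-}=\{\theta\}$ from the $\Gamma(\mathbb{Z}_n)$ discussion, but they hold for every strongly cospectral twin pair. For a $\lambda$-eigenvector $\mathbf{x}$ one has $(\lambda-\theta)(x_u-x_v)=0$, so $E_\lambda\mathbf{e}_u=E_\lambda\mathbf{e}_v$ whenever $\lambda\neq\theta$. Also $\mathbf{e}_u^T(\mathbf{e}_u-\mathbf{e}_v)=1$ gives $E_\theta\mathbf{e}_u\neq\mathbf{0}$ and $E_\theta\mathbf{e}_u\neq E_\theta\mathbf{e}_v$. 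It follows that $\Phi_{u,v}^{+}\neq\emptyset$, so $|\Phi_u|\ge 2$, as Godsil's theorem requires.

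Second, Godsil's theorem uses the fact that $\Phi_u$ is closed under Galois conjugation. That comes from $A$ having integer entries, not merely from $\phi(A(G),x)\in\mathbb{Z}[x]$. For example, a weighted matrix with spectrum $\{1,\pm\sqrt2\}$ can have a vertex with support $\{1,\sqrt2\}$; that vertex is periodic but not of Godsil's form. The paper's standing assumption that $G$ is simple and unweighted supplies integer entries, and it is also what gives you $\theta\in\{0,-1\}$, so your argument goes through in the paper's setting.
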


\begin{cor}\cite[Corollary 8]{Monterde2023Fractional}
	Let $\phi(A(G),x) \in \mathbb{Z}[x]$ and suppose $u$ and $v$ are twins in $G$
	that admit fractional revival. If at least one element in
	$\Phi_{u, v}^{+}$ is an integer, then $u$ and $v$ are periodic.
\end{cor}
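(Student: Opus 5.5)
This corollary is stated as a citation of \cite[Corollary 8]{Monterde2023Fractional}. To see why it holds, I would deduce it from \cref{periodic-symmetry}. By that theorem, since $u$ and $v$ are twins admitting fractional revival, periodicity is equivalent to the following condition: every element of $\Phi_{u,v}^{+}$ has the form $\frac{1}{2}(2\theta + b_j\sqrt{\Delta})$ with $b_j$ even and $\Delta=1$ or $\Delta>1$ square-free. It therefore suffices to show that an integer in $\Phi_{u,v}^{+}$ forces this form with $\Delta = 1$.

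The first step is to recall the structure that fractional revival imposes on $\Phi_{u,v}^{+}$ when $\phi(A(G),x)\in\mathbb{Z}[x]$. By \cref{proper-fr-symmetry}, or by the analogous statement in the periodic case, the elements of $\Phi_{u,v}^{+}$ are all of the form $\frac{1}{2}(a+b_j\sqrt{\Delta})$ with a common $a$ and a common $\Delta$. This holds because the ratio condition of \cref{proper-fr-time} forces the differences $\lambda_1-\lambda_j$ to be rational multiples of one another. In addition, $\Phi_{u,v}^{+}$ is closed under algebraic conjugation, since the projections $E_j$ are polynomials in $A$ with coefficients in the splitting field, and conjugation permutes them while preserving $E_j\mathbf{e}_u=\pm E_j\mathbf{e}_v$. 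The standard argument (Godsil's periodicity lemma) then gives two cases: either every element of $\Phi_{u,v}^{+}$ is an integer, or every element is a quadratic integer $\frac{1}{2}(a+b_j\sqrt{\Delta})$ with $\Delta>1$ square-free.

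If some element $\lambda_k\in\Phi_{u,v}^{+}$ is an integer, the second case cannot occur. In that case $b_k=0$, and since the pattern must involve irrational elements, the ratio $(\lambda_1-\lambda_j)/(\lambda_1-\lambda_k)$ would mix a rational and an irrational quantity unless all $b_j\sqrt{\Delta}$ are rational. Hence every element of $\Phi_{u,v}^{+}$ is an integer. Since $\theta$ is also an integer (in our setting $\theta\in\{0,-1\}$, or $\omega-\eta$ in general), each element can be written as $\frac{1}{2}(2\theta+b_j)$ with $b_j = 2(\lambda_j-\theta)$ even and $\Delta=1$. \cref{periodic-symmetry} then yields that $u$ and $v$ are periodic.

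The main obstacle is justifying the dichotomy for $\Phi_{u,v}^{+}$: either all its elements are integers or all are quadratic with a common irrational part. If the paper's cited results do not give this directly, I would argue it from the ratio condition together with closure under conjugation. Integrality of the eigenvalues as algebraic integers is automatic because the characteristic polynomial is monic in $\mathbb{Z}[x]$.
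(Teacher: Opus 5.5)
There is a genuine gap, and it is the step meant to exclude the quadratic case. (The paper gives no proof of this corollary; it only cites Monterde.) Your reduction to \cref{periodic-symmetry} is fine, and so is the dichotomy: either every element of $\Phi_{u,v}^{+}$ is an integer, or every element has the form $\frac12(a+b_j\sqrt{\Delta})$ with a \emph{common} $a$ and square-free $\Delta>1$. The trouble is that in the quadratic case all elements share the rational part $a/2$. An element with $b_k=0$ is then just the integer $a/2$, and every ratio $(\lambda_1-\lambda_j)/(\lambda_1-\lambda_k)=(b_1-b_j)/(b_1-b_k)$ is still rational, so no rational and irrational quantities get mixed. For example, $\{c,\,c\pm\sqrt{m}\}$ is closed under conjugation, satisfies the ratio condition, and contains the integer $c$. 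This cannot be repaired. In the quadratic case, $(\lambda_1-\theta)/(\lambda_1-\lambda_2)$ is rational only if $a=2\theta$, which is exactly the form required in \cref{periodic-symmetry}. An integer element $a/2$ would then equal $\theta$, which is impossible because $\theta\in\Phi_{u,v}^{-}$ and $\Phi_{u,v}^{+}\cap\Phi_{u,v}^{-}=\emptyset$. So an integer in $\Phi_{u,v}^{+}$ together with an irrational element actually \emph{rules out} periodicity, and the statement as quoted is false.

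Here is a concrete counterexample. Take $K_4$ on $\{u,v,w_1,w_2\}$ and add the path $w_1y_1y_2w_2$. Then $u,v$ are true twins with $\theta=-1$. The partition $\{\{u,v\},\{w_1,w_2\},\{y_1,y_2\}\}$ is equitable with quotient $I+\begin{pmatrix}0&2&0\\2&0&1\\0&1&0\end{pmatrix}$. Its eigenvalues are $1$ and $1\pm\sqrt5$, with eigenvectors $(1,0,-2)^T$ and $(2,\pm\sqrt5,1)^T$, all nonzero on the cell $\{u,v\}$. The remaining eigenvalues $-1,0,-2$ have eigenvectors $\mathbf{e}_u-\mathbf{e}_v$ and two vectors vanishing on $u$ and $v$. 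Hence $\Phi_{u,v}^{+}=\{1,1\pm\sqrt5\}$ and $\Phi_{u,v}^{-}=\{-1\}$. At $\tau=2\pi/\sqrt5$ one gets $H(\tau)\mathbf{e}_u=\cos\tau\,\mathbf{e}_u+i\sin\tau\,\mathbf{e}_v$ with $\sin\tau\neq0$, so proper fractional revival occurs. Yet $u$ is not periodic, since $e^{it}=e^{-it}$ and $e^{it\sqrt5}=1$ hold simultaneously only at $t=0$. This is consistent with \cref{periodic-symmetry}, since here $a=2\neq-2=2\theta$. What your argument does prove is a corrected version. If $\Phi_{u,v}^{+}$ contains at least two integers, or $|\Phi_{u,v}^{+}|=2$ and one element is an integer, then all elements are integers and $u,v$ are periodic. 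More generally, when $\Phi_{u,v}^{+}$ contains an integer, $u$ and $v$ are periodic if and only if $\Phi_{u,v}^{+}\subseteq\mathbb{Z}$.
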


Since $\phi(A(\Gamma(\mathbb{Z}_n)), x) \in \mathbb{Z}[x]$, setting $\theta = 0$
for false twins and $\theta = -1$ for true twins in the \cref{periodic-symmetry}
yields the required symmetry conditions for their non-integral support
whenever FR occurs. Specifically, for any-non integer $\lambda$, the support
of a false twin $u$ is symmetric about $0$ in the sense that
$\lambda \in \Phi_{u} \implies -\lambda \in \Phi_{u}$. Furthermore, assuming
$-1 \notin \sigma(C(\Upsilon_n))$, the support of a
true twin $v$ is symmetric about $-1$, meaning that $\lambda
	\in \Phi_v \implies -2 -\lambda \in \Phi_v$.

\begin{ex}
	By the Perron-Frobenius Theorem, the Perron root $\rho \approx \text{3.399}$ is
	in the eigenvalue support of every vertex in $\Gamma(\mathbb{Z}_{18})$. However,
	the spectrum lacks the symmetric counterpart $-\rho \approx -3.399$ and $-2 -
		\rho \approx -5.399$. Consequently, the symmetry conditions fail for both
	$V_3 = \{3, 15\}$ (where $0 \in \Phi_{3, 15}^{-}$) and $V_6 = \{6, 12\}$ (where $-1 \in
		\Phi_{6, 12}^{-}$), which immediately rules out periodicity. Consequently,
	PST cannot occur between any two vertices of $\Gamma(\mathbb{Z}_{18})$.
\end{ex}

The following corollary is an immediate consequence of \Cref{iff-strongly-cospectral}:

\begin{cor}\label{no-proper-fr-pk}
Let $n = p^k$ for a prime $p$ and an integer $k\ge 2$. If $p > 3$, the graph
$\Gamma(\mathbb{Z}_{p^k})$ does not admit proper fractional revival between
any two vertices.
\end{cor}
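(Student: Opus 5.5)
The plan is to show that every vertex of $\Gamma(\mathbb{Z}_{p^k})$ lies in a twin set of size at least $3$. The twin obstructions quoted at the start of \cref{PST} (namely \cite[Corollary 2]{Monterde2023Fractional}) then rule out proper fractional revival between any pair, much as \cref{iff-strongly-cospectral} rules out strong cospectrality. I will not route the main argument through \cref{iff-strongly-cospectral}, because its hypothesis that $-1$ is not an eigenvalue of $C(\Upsilon_{p^k})$ may fail; I would only use that theorem as a remark in the case where the hypothesis holds.

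\emph{Step 1 (cell sizes).} For $n=p^k$ the proper divisors are $d=p^j$ with $1\le j\le k-1$. Hence every cell has size
\[
|V_{p^j}|=\varphi(p^{k-j})=p^{k-j-1}(p-1)\ge p-1\ge 4,
\]
since $p>3$. In particular $\pi_{\mathcal{D}}$ has no cell of size $1$ or $2$. Also $|V(\Gamma(\mathbb{Z}_{p^k}))|=p^{k-1}-1\ge p-1\ge 4$, so the standing hypothesis $|V(G)|\ge 3$ of the twin results is satisfied.

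\emph{Step 2 (twin structure).} By \cref{cell-vs-twins}, each cell $V_{p^j}$ is a maximal set of twins, and twins never lie in different cells. Thus every vertex $u$ belongs to a twin set $T=V_{p^j}$ with $|T|\ge 4$.

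\emph{Step 3 (case split on a pair $u\neq v$).} If $u,v$ lie in the same cell, they form a pair inside a twin set of size at least $3$. Proper FR is then excluded by \cite[Corollary 2]{Monterde2023Fractional}, equivalently by \cite[Corollary 3.10]{Monterde2022Strong} combined with \cite[Lemma 4]{Monterde2023Fractional}, since proper FR between twins forces strong cospectrality and hence parallelism. If $u,v$ lie in different cells, then $v$ is not a twin of $u$ while $u$ has twins. This is exactly the ``twin versus non-twin'' situation excluded by the same corollary. This case is also consistent with \cref{no-pst-1}.

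The main obstacle is the second case. I need to make sure the cited corollary really covers a pair $u,v$ in which both vertices have twins, just not each other. I would check this through the underlying mechanism: if $u\in T$ with $|T|\ge 2$, then $\mathbf{e}_u-\mathbf{e}_w$ is a $\theta$-eigenvector for each $w\in T\setminus\{u\}$. Proper FR from $u$ to $v$ requires $E_\theta\mathbf{e}_u$ and $E_\theta\mathbf{e}_v$ to be parallel. However, $E_\theta\mathbf{e}_u$ pairs nontrivially with $\mathbf{e}_u-\mathbf{e}_w$, while $E_\theta\mathbf{e}_v$ pairs with it trivially because $v\notin\{u,w\}$. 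This contradiction is \cite[Theorem 3.9(2)]{Monterde2022Strong}, already used in \cref{iff-strongly-cospectral}.
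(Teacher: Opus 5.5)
Your argument is correct, but it takes a different route from the paper's.

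\textbf{The paper's route.} The paper argues through strong cospectrality. It notes that every cell has size $\varphi(n/d)>2$. It then uses \cref{no-minus-one-prime-odd} and \cref{no-minus-one-prime-even} to meet the hypothesis of \cref{iff-strongly-cospectral}, concludes that no pair of vertices is strongly cospectral, and finishes with \cref{proper-fr-symmetry}. Since that last theorem concerns twins only, pairs from different cells are actually handled by the earlier appeal to \cite[Corollary 2]{Monterde2023Fractional} at the start of \cref{PST}.

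\textbf{Your route.} You bypass the spectral machinery entirely. Cell sizes $\ge p-1\ge 4$ together with the twin obstruction already settle everything. This makes the corollary independent of the determinant computations for $C(\Upsilon_{p^k})$, and it makes the non-twin case explicit. The paper's route has the advantage of being the uniform method that is genuinely needed when cells of size $2$ occur. For $p>3$ there are none, and your argument shows the spectral hypothesis is superfluous.

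\textbf{Two small corrections.}

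Your stated reason for avoiding \cref{iff-strongly-cospectral} is unfounded. The paper proves $-1\notin\sigma(C(\Upsilon_{p^k}))$ for all $k\ge 2$, and for $k=2$ it is immediate because $C(\Upsilon_{p^2})=[p-2]$.

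In your mechanism check, bare parallelism does not yet give a contradiction, because it permits $E_\theta\mathbf{e}_v=\mathbf{0}$. The fix is as follows.
\begin{enumerate}[label=(\roman*)]
\item Apply $E_\theta$ to $H(\tau)\mathbf{e}_u=\alpha\mathbf{e}_u+\beta\mathbf{e}_v$ to get $(e^{i\tau\theta}-\alpha)E_\theta\mathbf{e}_u=\beta E_\theta\mathbf{e}_v$.
\item Multiply on the left by $(\mathbf{e}_u-\mathbf{e}_w)^T$. This gives $\alpha=e^{i\tau\theta}$, hence $|\alpha|=1$ and $\beta=0$, contradicting properness.
\end{enumerate}
Since $|T|\ge 4$, a twin $w\notin\{u,v\}$ of $u$ exists for every $v\neq u$. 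So this single computation covers both of your cases, and the case split is unnecessary.
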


\begin{proof}
    The set of distinct proper divisors of $n = p^k$ is given by $\mathcal{D} =
    \{p^i\mid 1 \le i \le k - 1\}$. It follows that for $p > 3$, $\varphi(n/d) >
    2$ for all $d \in \mathcal{D}$. If an eigenvalue $\theta \in \{0, -1\}$ is
    present in the spectrum of $\Gamma(\mathbb{Z}_{p^k})$, it will not be in the
    spectrum of the quotient matrix $C(\Upsilon_{p^k})$ (see \cref{no-minus-one-prime-odd}
    and \cref{no-minus-one-prime-even}). By \cref{iff-strongly-cospectral} and
    \cref{proper-fr-symmetry}, the claim follows.
\end{proof}

\begin{ex}\label{p-squared}
    Let $n = p^2$, where $p \ge 2$ is a prime. Note that the spectrum is given
    by $\sigma(\Gamma(\mathbb{Z}_{p^2})) = \{p - 2, -1^{(p-2)}\}$. Since the
    spectrum consists entirely of integers, it follows that $\Gamma(\mathbb{Z}_{p^2})$
    is periodic with minimum period $\tau = \frac{2\pi}{p - 1}$. For $p\neq 3$,
    by \cref{no-proper-fr-pk}, $\Gamma(\mathbb{Z}_{p^2})$ does not admit proper
    FR. Conversely, for $p = 3$, PST occurs between vertices in
    $\Gamma(\mathbb{Z}_9)$.
\end{ex}

\begin{ex}\label{p-cubed}
    Consider the case $n = p^3$. According to \cref{no-proper-fr-pk}, the graph
    $\Gamma(\mathbb{Z}_{p^3})$ has no proper FR for $p > 3$. If $n = 2^3$, the
    graph $\Gamma(\mathbb{Z}_8)$ admits PST (and thus proper FR) by \cref{has-pst}.
    Finally, for $n = 3^3$,  consider the pair $\{9, 18\}$, which is strongly cospectral.
    Given that the support is $\Phi_{9, 18}^{+} = \{\lambda_1, \lambda_2\} = \{4,
    -3\}$ and the shift is $\theta = -1$, it follows from \cref{proper-fr-time}
    with $q = 1$ and $k = 1$ that proper FR occurs at minimum time
    $\tau = \frac{2\pi}{7}$. Moreover, using $\gamma \equiv \frac{5\pi}{7} \pmod
    \pi$ obtained from \cref{proper-fr-time}, we find the transition amplitudes
    to be $\alpha = e^{i\zeta} \cos\left(\frac{5\pi}{7}\right) \quad \text{and}
    \quad \beta = e^{i\zeta} \sin\left(\frac{5\pi}{7}\right)$. Consequently, at
    time $\tau = \frac{2\pi}{7}$, the state is distributed between the two vertices with
    probabilities $|\alpha|^2 \approx 0.3887$ and $|\beta|^2 \approx 0.6113$.
\end{ex}

We now characterize FR on $\Gamma(\mathbb{Z}_n)$ for the bipartite case. According
to \cite[Theorem 2.3]{Bajaj2022On}, $\Gamma(\mathbb{Z}_n)$ is bipartite if and
only if $n \in \{8, 9, pq, 4q\}$, where $p$ and $q$ are distinct primes. We first
address the case $n = pq$.

\begin{thm}\label{fr-pq}
    Let $p, q$ be distinct primes. Then the graph $\Gamma(\mathbb{Z}_{pq})$ is
    periodic. Moreover, $\Gamma(\mathbb{Z}_{pq})$ admits proper fractional
    revival if and only if $p = 3$ or $q = 3$.
\end{thm}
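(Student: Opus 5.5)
The plan is to exploit the explicit structure $\Gamma(\mathbb{Z}_{pq}) \cong K_{p-1,q-1}$. The proper divisors are $p$ and $q$, with $V_p = \{p,2p,\dots,(q-1)p\}$ of size $q-1$ and $V_q$ of size $p-1$. Since $n \nmid p^2$ and $n\nmid q^2$, both cells induce null graphs, and $p\cdot q\equiv 0$, so every vertex of $V_p$ is adjacent to every vertex of $V_q$. By \cref{spectrum1}, the spectrum is
\[
\sigma(\Gamma(\mathbb{Z}_{pq})) = \{0^{p+q-4}\}\cup\{\pm\sqrt{(p-1)(q-1)}\},
\]
because $C(\Upsilon_{pq})=\begin{pmatrix}0&\sqrt{(p-1)(q-1)}\\ \sqrt{(p-1)(q-1)}&0\end{pmatrix}$. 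There is no eigenvalue $-1$ at all, so \cref{iff-strongly-cospectral} applies.

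\textbf{Periodicity.} Set $\mu=\sqrt{(p-1)(q-1)}$. The eigenvalues are $0$ and $\pm\mu$, so
\[
H(t)=E_0+e^{it\mu}E_{\mu}+e^{-it\mu}E_{-\mu}.
\]
At $\tau=2\pi/\mu$ all phases equal $1$, hence $H(\tau)=I$ and the graph is periodic. Alternatively, the eigenvalues are $0$ and $\pm\mu$ with $\mu^2\in\mathbb{Z}$, which is the standard ratio condition for periodicity.

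\textbf{Proper FR $\Rightarrow$ $3\in\{p,q\}$.} Proper FR between $u$ and $v$ is ruled out unless they are twins forming a set of size $2$ (\cite[Corollary 2]{Monterde2023Fractional}, used in \cref{PST}, or the strong cospectrality requirement \cite[Lemma 4]{Monterde2023Fractional} combined with \cref{iff-strongly-cospectral}). Outside the twin case, I would invoke the general result that proper FR requires strong cospectrality, which by \cref{iff-strongly-cospectral} forces $\{u,v\}$ to be a cell of size $2$.

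So some cell has $\varphi(q)=q-1=2$ or $\varphi(p)=p-1=2$, i.e.\ $q=3$ or $p=3$. Relying on \cref{iff-strongly-cospectral} for arbitrary pairs, including ones that are not twins, is the delicate point. It holds here because no $-1$ lies in the spectrum, but the hypothesis should be checked explicitly. The small degenerate case $n=6$, where $\Gamma(\mathbb{Z}_6)\cong P_3$, must also be handled. There the cell of size $2$ is $\{2,4\}$, and PST between $2$ and $4$ is known, so it fits the conclusion.

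\textbf{$3\in\{p,q\}$ $\Rightarrow$ proper FR.} Say $p=3$; then $n=3q$. If $q\ne 3$, \cref{has-pst} gives PST between $q$ and $2q$, and PST is proper FR with $\beta=1\neq 0$. The case $q=3$ would make $n=9$, which is excluded since the primes are distinct.

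Alternatively, one can verify this via \cref{proper-fr-symmetry}(1) with $\theta=0$ and $\Phi^+=\{\pm\mu\}$, which has the form $\tfrac12(0+b_j\sqrt{\Delta})$ with $\mu=c\sqrt{\Delta}$. Then $g=2c$ does not divide $c$, so the condition holds.

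The main obstacle is justifying the necessity direction cleanly for non-twin pairs. I expect to handle it through Godsil's CEP argument from \cref{no-pst-1}, extended to FR, or simply through strong cospectrality as above.
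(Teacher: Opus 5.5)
Your proposal is correct and takes essentially the paper's route. The spectrum of $K_{p-1,q-1}$ gives periodicity; your direct check $H(2\pi/\mu)=I$ simply replaces the paper's citation of Godsil's periodicity criterion. Proper FR is then confined to twin cells of size $2$, which forces $p=3$ or $q=3$, and \cref{has-pst} gives the converse.

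One thing to fix: drop the appeal to a ``general result that proper FR requires strong cospectrality.'' That is false in general, since FR can occur between non-cospectral vertices. You do not need it: \cite[Corollary 2]{Monterde2023Fractional}, or the CEP argument, already excludes every non-twin pair here, because every vertex other than the center of the star (when $2\in\{p,q\}$) lies in a twin cell of size at least $2$.
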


\begin{proof}
    We first establish the periodicity of $\Gamma(\mathbb{Z}_{pq})$. Note that
    $\Gamma(\mathbb{Z}_{pq}) \cong K_{p-1, q-1}$, with spectrum
    $\left\{ \pm\sqrt{(p-1)(q-1)}, 0^{(p+q-4)}\right\}$. Since these
    eigenvalues are either integers or quadratic integers, it follows that
    $\Gamma(\mathbb{Z}_{pq})$ is periodic with minimum period $\tau =
    \frac{2\pi}{\sqrt{(p-1)(q-1)}}$ \cite[Theorem 5.2]{Godsil2012State}. We note,
    however, that the minimum period may differ across the vertices. If $p = 2$
    or $q = 2$, we may assume $p=2$ by symmetry. Then $\Gamma(\mathbb{Z}_{2q})
    \cong K_{1,q-1}$, where the central vertex $q$ forms a singleton cell of
    $\pi_{\mathcal{D}}$. Since the eigenvalue $0$ is not in the support of this
    central vertex (\cref{no-0-and-1}), its minimum period is $\tau = \frac{\pi}{\sqrt{q-1}}$.
    In contrast, the leaf vertices in $\Gamma(V_{2})$ require the alignment of
    the zero eigenvalue. Consequently, the minimum period at the leaf vertices
    is $\tau = \frac{2\pi}{\sqrt{q-1}}$. For $p, q > 2$, the induced subgraphs
    $\Gamma(V_{p})$ and $\Gamma(V_q)$ of $\Gamma(\mathbb{Z}_{pq})$ both are null.
    By \cref{periodic-symmetry} with $\theta = 0$, any vertex in $\Gamma(\mathbb{Z}_{pq})$
    is periodic with minimum period $\tau = \frac{2\pi}{\sqrt{(p-1)(q-1)}}$.

    The existence of proper FR between two vertices $u, v$ in
    $\Gamma(\mathbb{Z}_{pq})$ requires that they be strongly cospectral. If $p,q
    \neq 3$, the graph contains no equitable partition cell of size $2$, and thus
    no pair of vertices satisfies the requirement for strong cospectrality. Therefore,
    proper FR cannot occur by \cref{proper-fr-symmetry}. Conversely, if $p = 3$
    or $q = 3$, we may assume without loss of generality that $p = 3$. Then PST
    occurs between vertices $q$ and $2q$ by \cref{has-pst}, and thus the claim
    follows.
\end{proof}

\begin{thm}\label{fr-bipartite}
Suppose $\Gamma(\mathbb{Z}_n)$ is bipartite. Then $\Gamma(\mathbb{Z}_n)$ is
periodic if and only if $n\neq 4q$. Moreover, $\Gamma(\mathbb{Z}_n)$ exhibits
proper fractional revival if and only if $n \in \{8, 9, 3q\}$.
\end{thm}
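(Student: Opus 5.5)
The plan is to work through the four bipartite families of \cite[Theorem 2.3]{Bajaj2022On}, $n\in\{8,9,pq,4q\}$, one at a time. Here $q$ is an odd prime in the family $4q$, since $4\cdot 2=8$ is already listed separately.

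\textbf{The cases $n=8$, $9$ and $pq$.} These follow from results already proved. Since $\Gamma(\mathbb{Z}_8)\cong P_3$ has spectrum $\{0,\pm\sqrt2\}$ and $\Gamma(\mathbb{Z}_9)\cong P_2$ has spectrum $\{\pm1\}$, both graphs are periodic by \cite[Theorem 5.2]{Godsil2012State}. Both admit PST, and hence proper FR, by \cref{has-pst}. For $n=pq$, \cref{fr-pq} gives periodicity, and it gives proper FR exactly when one of the primes is $3$, that is, when $n=3q$. It remains to show that $\Gamma(\mathbb{Z}_{4q})$ is neither periodic nor admits proper FR. Together these give both equivalences in the statement.

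\textbf{Structure of $\Gamma(\mathbb{Z}_{4q})$.} The proper divisors are $2,4,q,2q$, with cell sizes $|V_2|=|V_4|=q-1$, $|V_q|=2$ and $|V_{2q}|=1$. The pairs $d_id_j$ divisible by $4q$ are $2\cdot 2q$, $4\cdot q$ and $4\cdot 2q$. Moreover $4q\nmid d^2$ except for $d=2q$, whose cell is a singleton. Hence $\Upsilon_{4q}$ is the path $V_2 - V_{2q} - V_4 - V_q$, and by \cref{sym_quot_matrix} the matrix $C(\Upsilon_{4q})$ is a weighted path with edge weights $\sqrt{q-1},\sqrt{q-1},\sqrt{2(q-1)}$ and zero diagonal. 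Its characteristic polynomial is
\[
x^4-4(q-1)x^2+2(q-1)^2 ,
\]
so its eigenvalues are $\pm\sqrt{(q-1)(2\pm\sqrt2)}$. I will check that these are algebraic integers of degree $4$. If $\lambda^2=(q-1)(2\pm\sqrt2)$, then $\lambda^2\notin\mathbb{Q}$, so $\lambda$ is not rational and not of the form $\frac12(a+b\sqrt\Delta)$.

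\textbf{Non-periodicity.} Since $\phi(A,x)\in\mathbb{Z}[x]$, periodicity of the graph would force every eigenvalue to be an integer or a quadratic integer of the form $\frac12(a+b_j\sqrt\Delta)$ \cite[Theorem 5.2]{Godsil2012State}. This fails, because these quotient eigenvalues lie in $\sigma(\Gamma(\mathbb{Z}_{4q}))$ by \cref{spectrum1}.

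\textbf{No proper FR.} By \cref{no-pst-1} and \cref{iff-strongly-cospectral}, or directly by the twin bounds, the only candidate pair is the false-twin cell $V_q=\{q,3q\}$. This pair is strongly cospectral by \cref{null-parallel}, and here $\theta=0$. Both conditions of \cref{proper-fr-symmetry} require every element of $\Phi^+_{q,3q}$ to be a quadratic integer of the form $\frac12(a+b_j\sqrt\Delta)$. So it suffices to show that some degree-$4$ eigenvalue lies in $\Phi_q$.

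\textbf{Main obstacle: the support of $q$.} I expect this support computation to be the main step. I will use $\hat S^T E_\lambda \mathbf{e}_q = F_\lambda \hat S^T\mathbf{e}_q$, where $F_\lambda$ is the spectral idempotent of $C$, and $\hat S^T\mathbf{e}_q$ is a nonzero multiple of the unit vector at the end vertex $V_q$ of the path. The key fact is that every eigenvector of a weighted path with positive weights has nonzero end entries. This holds because the three-term recurrence would otherwise force the whole eigenvector to vanish. Hence all four quotient eigenvalues lie in $\Phi_q$, and therefore in $\Phi^+_{q,3q}$, since $\Phi^-_{q,3q}=\{0\}$. Neither condition of \cref{proper-fr-symmetry} can then hold, so $\Gamma(\mathbb{Z}_{4q})$ has no proper FR. This completes the case analysis.
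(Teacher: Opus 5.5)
There is a missing case. Your claim that the only candidate pair in $\Gamma(\mathbb{Z}_{4q})$ is $V_q=\{q,3q\}$ silently assumes $|V_2|=|V_4|=q-1\ge 3$. This fails for $q=3$. In $\Gamma(\mathbb{Z}_{12})$ the cells $V_2=\{2,10\}$ and $V_4=\{4,8\}$ are also false-twin cells of size $2$, hence strongly cospectral by \cref{null-parallel}, and your argument says nothing about them. Since $12\notin\{8,9,3q\}$, the theorem asserts that proper FR fails between $2$ and $10$ and between $4$ and $8$, so this has to be proved. (For the reduction to twin pairs, the valid citation is the twin bound; \cref{no-pst-1} concerns only PST.)

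The patch is short. $V_2$ is the other endpoint of the path $V_2 - V_{2q} - V_4 - V_q$, so your endpoint argument applies verbatim. For the interior cell $V_4$, use that all quotient eigenvalues are nonzero. An eigenvector with $x_{V_4}=0$ would force $x_{V_q}=0$ through $\sqrt{2(q-1)}\,x_{V_4}=\lambda x_{V_q}$, and then vanish entirely. Alternatively, treat all size-$2$ cells uniformly, which is effectively what the paper's route through \cref{no-fr-p2q} does. For strongly cospectral false twins, $\mathbf e_u+\mathbf e_v=2\sum_{\lambda\in\Phi^+_{u,v}}E_\lambda\mathbf e_u\neq\mathbf 0$. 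So $\Phi^+_{u,v}$ is a nonempty subset of $\sigma(\Gamma(\mathbb{Z}_{4q}))\setminus\{0\}=\sigma(C(\Upsilon_{4q}))$ (here $m_2=0$), and every element of that set has degree $4$.

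A smaller point: your justification of degree $4$ is a non sequitur as written. The fact $\lambda^2\notin\mathbb{Q}$ does not exclude $\lambda=\frac12(a+b\sqrt\Delta)$ with $ab\neq 0$; for example $(1+\sqrt2)^2=3+2\sqrt2\notin\mathbb{Q}$. You also need that $(q-1)(2\pm\sqrt2)$ is not a square in $\mathbb{Q}(\sqrt2)$, which holds because its norm $2(q-1)^2$ is not a rational square. Equivalently, check that the even quartic $x^4-4(q-1)x^2+2(q-1)^2$ has no factorization $(x^2-\alpha)(x^2-\beta)$ with $\alpha,\beta\in\mathbb{Q}$, and none of the form $(x^2+ax+b)(x^2-ax+b)$, since the latter would need $b^2=2(q-1)^2$.

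With these two repairs your proof is complete. It differs from the paper's in a useful way: the paper handles $n=4q$ by citing the general $p^2q$ theorem, using $|\Phi^+_{u,v}|\ge 2$ and irreducibility. Your explicit computation of $C(\Upsilon_{4q})$ and of the supports is more self-contained, and it shows exactly which eigenvalues obstruct periodicity and FR.
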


\begin{proof}
    Assume that $\Gamma(\mathbb{Z}_n)$ is bipartite, we know that $n \in \{8,
    9, pq, 4q\}$ \cite[Theorem 2.3]{Bajaj2022On}. If $n \in \{8, 9, pq\}$, the
    spectrum of $\Gamma(\mathbb{Z}_n)$ consists of
    integers or quadratic integers; thus, $\Gamma(\mathbb{Z}_n)$ is periodic by
    \cite[Theorem 5.2]{Godsil2012State}. Conversely, if $n = 4q$,
    \cref{no-fr-p2q} establishes that $\Gamma(\mathbb{Z}_{4q})$ is not periodic. 

    For the second claim, the existence of proper FR for $n \in \{8, 9, 3q\}$
    follows from \cref{has-pst} and \cref{fr-pq}. Conversely, if $n = 4q$, the
    graph cannot exhibit proper FR by \cref{no-fr-p2q}. The case $n = pq$ where
    $p, q \neq 3$ follows from \cref{fr-pq}, and this completes our proof.
\end{proof}

Next, we consider the case $n = p^2q$, where the existence of FR is fully
characterized. To simplify notation, we let $\phi_C(x)$ denote $\phi(C(\Upsilon_n), x)$
in what follows. We begin with the following lemma.

\begin{lem}\label{no-quadratic-factor-p2q}
	Let $p$ and $q$ be distinct primes. Then the characteristic polynomial
	$\phi(C(\Upsilon_{p^2q}), x)$ cannot be factored into two quadratic factors.
\end{lem}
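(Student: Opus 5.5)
The plan is to compute $\phi_C(x)$ explicitly and then rule out a factorization into two monic quadratics by comparing coefficients. I take the factorization to be over $\mathbb{Z}$, which is equivalent to over $\mathbb{Q}$ by Gauss's lemma; this is the form needed downstream, since quadratic factors are what give eigenvalues of the shape $\frac12(a+b\sqrt\Delta)$ in \cref{proper-fr-symmetry}.

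\textbf{Computing $\phi_C$.} For $n=p^2q$ the proper divisors are $p,q,p^2,pq$, with cell sizes
\[
|V_p|=(p-1)(q-1),\quad |V_q|=p(p-1),\quad |V_{p^2}|=q-1,\quad |V_{pq}|=p-1 .
\]
The condition $n\mid d_id_j$ gives exactly the edges $q\sim p^2$, $p^2\sim pq$ and $pq\sim p$ in $\Upsilon_{p^2q}$. The only cell inducing a complete graph is $V_{pq}$, since $n\mid (pq)^2$ while $n\nmid p^2,\,q^2,\,p^4$. Hence $C(\Upsilon_{p^2q})$ is the weighted path $q - p^2 - pq - p$ with a loop of weight $r=p-2$ at $pq$. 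Write the squared edge weights as
\[
a=p(p-1)(q-1),\quad b=(p-1)(q-1),\quad c=(p-1)^2(q-1).
\]
Expanding the tridiagonal determinant over matchings of the path gives
\[
\phi_C(x)=x^4-rx^3-(a+b+c)x^2+rax+ac .
\]

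\textbf{Coefficient comparison.} Suppose $\phi_C(x)=(x^2+sx+t)(x^2+ux+w)$ with $s,t,u,w\in\mathbb{Z}$. Then
\[
s+u=-r,\qquad t+w+su=-(a+b+c),\qquad sw+tu=ra,\qquad tw=ac=p(p-1)^3(q-1)^2 .
\]
I would eliminate $u=-r-s$ and use the two middle equations to express $t$ and $w$ in terms of $s$:
\[
t+w=-(a+b+c)+s(r+s),\qquad sw-t(r+s)=ra .
\]
This is a linear system in $(t,w)$ whose determinant is $\pm(r+2s)$. Substituting the solution into $tw=ac$ yields a single polynomial equation $F(s)=0$, with coefficients polynomial in $p$ and $q$. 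The degenerate case $r+2s=0$ (only possible for $p$ even, i.e.\ $p=2$) I would handle separately by direct substitution.

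\textbf{Showing $F(s)=0$ has no integer solution.} Here I would combine two tools.
\begin{itemize}
\item \emph{Divisibility.} Every coefficient except the leading ones is divisible by $p-1$. Reducing modulo $p-1$ (when $p>2$) gives $\phi_C\equiv x^3(x+1)$, which forces, up to symmetry, $t\equiv w\equiv 0$ and one of $s,u\equiv 0$, the other $\equiv 1$. Similarly, reducing modulo $q-1$ gives $\phi_C\equiv x^3(x-r)$, and reducing modulo $p$ gives information about $a$.
\item \emph{Root location.} Interlacing, together with the nonzero eigenvalue property \cite[Theorem 3.5]{Bajaj2022On}, bounds the four roots. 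There is one large Perron root and the others are controlled, so one quadratic factor must contain the Perron root together with a root of bounded size. This bounds $s$ and $t$ in terms of $p$ and $q$.
\end{itemize}
These constraints should reduce to finitely many candidate pairs $(s,t)$, each of which is then refuted by substituting into $F(s)=0$.

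\textbf{Main obstacle.} The hard step is this last one: turning the congruence and size constraints into a genuine contradiction valid for all primes $p\neq q$. I would first try to show that $p-1$ divides both $t$ and $w$ exactly to the right powers, and that $p$ divides exactly one of them. This would pin down $\{t,w\}$ as a split of $p(p-1)^3(q-1)^2$. Checking each split against $sw+tu=ra$ and the $x^2$-coefficient should then leave only small cases, such as $p=2$ or $q=2$, which I would verify by direct computation.
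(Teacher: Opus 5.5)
Your computation of $\phi_C$ and the coefficient system are correct and match the paper. However, the proposal stops exactly where the proof has to begin: you never show that $F(s)=0$ has no integer solution, and the tools you list cannot do it.

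\begin{itemize}
\item Reductions modulo $p-1$ or $q-1$ are generally over $\mathbb{Z}/m\mathbb{Z}$ with $m$ composite, where factorization is not unique. For instance, $x^4+x^3\equiv(x^2+2x)(x^2+3x+2)\pmod 4$. So your claim that $t\equiv w\equiv 0$ and $\{s,u\}\equiv\{0,1\}$ fails whenever $4\mid p-1$.
\item The interlacing and Perron-root bounds depend on $p$ and $q$. They give a finite search for each fixed pair, not a proof for all primes.
\item Likewise, $p=2$ and $q=2$ are infinite families, not small cases to verify by direct computation.
\item Your intermediate goal ``$p$ divides exactly one of $t,w$'' points the wrong way.
\end{itemize}

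The missing idea is reduction modulo the prime $p$. One has $\phi_C\equiv x^3(x+2)\pmod p$. For odd $p$, the only splitting into monic quadratics in $\mathbb{F}_p[x]$ is $x^2\cdot x(x+2)$. Hence $p$ divides both $t$ and $w$, so $p^2\mid p(p-1)^3(q-1)^2$, i.e.\ $p\mid q-1$. This disposes of every pair with $p\nmid q-1$ at once, including $q=2$. The case $p=2$ is handled directly: there $\phi_C=x^4-4(q-1)x^2+2(q-1)^2$, which splits over $\mathbb{Q}$ only if $\sqrt2\in\mathbb{Q}$.

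The remaining case $p\mid q-1$ is the real content of the lemma. It needs a genuine Diophantine argument, and nothing in your plan supplies one. The paper's route is as follows.
\begin{itemize}
\item After elimination, $F$ depends on $s$ only through $y=s(s+r)$, because $(2s+r)^2=r^2+4y$. This gives the cubic $y^3=pM(y-M)(r^2+4y)$ with $M=(p-1)(q-1)$ and $r=p-2$.
\item It follows that $p\mid y$. Writing $M=pA$ and $y=pz$ gives $z^3=A(z-A)(r^2+4pz)$. Here $z=0$ is impossible, since then $A^2r^2=0$.
\item Write $|z|=kg$ and $A=lg$ with $\gcd(k,l)=1$. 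Coprimality forces $g=m\,l(l+k)$ when $z<0$, and $g=m\,l(k-l)$ when $z>0$.
\item The cubic then becomes $mk\bigl(k^2+4pl(l+k)\bigr)=r^2$ or $mk\bigl(k^2-4pl(k-l)\bigr)=r^2$, respectively. Both are excluded by explicit size estimates in $p$.
\end{itemize}
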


\begin{proof}
	By \cite[Example 3.1]{Bajaj2022On}, the characteristic polynomial
	of $C(\Upsilon_{p^2q})$ is given by
	\begin{equation}\label{charpoly-p2q}
		\begin{split}
			\phi_C(x) & = x^4 - (p-2)x^3 - 2p(p-1)(q-1)x^2 + \\&\quad\;
			p(p-1)(p-2)(q-1)x + p(p-1)^3(q-1)^2.
		\end{split}
	\end{equation}

	Reducing $\phi_C(x)$ modulo $p$ yields $\phi_C(x) \equiv x^4 + 2x^3
		\pmod p$. If $\phi_C(x)$ were to factor into two quadratics,
	their reductions modulo $p$ would necessarily be $x^2$ and $x^2 + 2x$.
	Because both lack a constant term modulo $p$, the constant term of $\phi_C(x)$
	would be a multiple of $p^2$. That is, we must have $p^2 \mid
		p(p-1)^3(q-1)^2$, which implies $p \mid (p-1)^3(q-1)^2$. Therefore, if
	$p \nmid (q-1)$, $\phi_C(x)$ cannot be factor into two quadratic
	polynomials. Thus, it suffices to consider only the case $p \mid (q-1)$.

	Suppose $\phi_C(x)$ factors into two monic quadratics over $\mathbb{Z}$,
	given by
	\begin{equation} \label{charpoly-factor-2}
		\phi_C(x) = (x^2 + ax + b)(x^2 + cx + d).
	\end{equation}
	Let $M = (p-1)(q-1), N = p-2, K = pM$. We assume $N > 0$. Note that if
	$N = 0$ (i.e., $p = 2)$, we have $\sigma(C(\Upsilon_{4q})) =
		\left\{\pm\sqrt{(q-1)(2\pm \sqrt{2})}\right\}$. Because these eigenvalues
	generate a field extension of degree $4$ over $\mathbb{Q}$, their
	characteristic polynomial is clearly irreducible over $\mathbb{Z}$.

	By expanding \cref{charpoly-factor-2} and equating the coefficients with
	those of \cref{charpoly-p2q}, we obtain
	the following system:
	\begin{subequations} \label{system}
		\begin{align}
			a + c      & = -N \label{sys_x3}                 \\
			ac + b + d & = -2K \label{sys_x2}                \\
			ad + bc    & = KN \label{sys_x1}                 \\
			bd         & = \frac{K^2(p-1)}{p} \label{sys_x0}
		\end{align}
	\end{subequations}

	By substituting $c = -(a+N)$ from \cref{sys_x3} into \cref{sys_x2},
	we obtain $b + d = a(a + N) - 2K$. Similarly, substituting \cref{sys_x3}
	into \cref{sys_x1} yields $b - d = -\frac{N(b + K)}{a}$. Note that $a \neq 0$;
	indeed, $a = 0$ implies $c = -N$ and $b = d = -K$, which reduces \cref{sys_x0}
    to the contradiction $K^2 = K^2(p-1)/p$ for $K > 0$. Furthermore, since $N =
    p-2$ is odd for all primes $p > 2$, $2a + N$ is nonzero for any $a \in
    \mathbb{Z}$. Combining the expressions for $b+d$ and $b-d$, we obtain:
	\begin{align}
		b & = \frac{a^2(a+N)}{2a+N} - K, \label{b_final} \\[1ex]
		d & = \frac{a(a+N)^2}{2a+N} - K. \label{d_final}
	\end{align}

	Defining $y = a(a+N)$ and noting the identity $(2a + N)^2 = N^2 + 4y$, we
    substitue \cref{b_final,d_final} into \cref{sys_x0} (with $M = K/p$) to
    obtain the following equation in $y$:
	\begin{equation}\label{equation-y}
		y^3 = pM(y - M)(N^2 + 4y).
	\end{equation}
	Since the right-hand side of \cref{equation-y} is congruent to $0 \pmod p$,
    if $y$ is an integer solution, then $y$ must be a multiple of $p$.

    Given that $p \mid (q-1)$, we may write $M = pA$ where $A$ is a positive
    multiple of $p-1$. Substituting this and $y = pz$ into \cref{equation-y}
    yields the following equation in $z \in \mathbb{Z}$:
	\begin{equation}\label{equation-z}
		z^3 = A(z - A)(N^2 + 4pz).
	\end{equation}

    To show that no integer solutions for $z$ exist, we consider three cases:

	\begin{enumerate}[label=\textbf{Case \arabic*:}, wide, labelindent=0pt]
        \item $z = 0$. Substituting $z = 0$ into \cref{equation-z} yields
            $0 = -A^2N^2$, which is a contradiction.

        \item $z < 0$. Let $z' = -z > 0$ and write $z' = kd,
            A = ld$ where $d = \gcd(z', A)$ and $\gcd(k, l) = 1$. Note that
            $d, k, l$ are all positive integers. Under these substitutions,
            \cref{equation-z} becomes:
            \begin{equation}\label{equation-k-neg-1}
                (kd)^3  = d^2l(l + k)(N^2 - 4pz')
            \end{equation}
            which simplifies to:
            \begin{equation}\label{equation-k-neg-2}
                dk^3  = l(l + k)(N^2 - 4pz')
            \end{equation}
            Since $\gcd{(k^3, l)} = 1$ and $\gcd{(k^3, l + k)} = 1$, it follows
            that $l(l + k)$ must divide $d$. Setting $d = nl(l+k)$ for some
            $n\in \mathbb{Z}^{+}$, we can write $z' = nlk(l + k)$ and $A =
            nl^2(l + k)$. Substituting the expressions for $d$ and $z'$ into
            \cref{equation-k-neg-2}, we have:
            \begin{align}\label{equation-k-neg-last}
                nl(l+k)k^3         & = l(l + k)(N^2 - 4pz') \nonumber  \\
                nk^3                 & = N^2 - 4pz' \nonumber            \\
                nk^3                 & =  N^2 - 4p(nlk(l + k)) \nonumber \\
                nk^3 + 4pnlk(l + k) & = (p-2)^2
            \end{align}
            Expanding the right-hand side of \cref{equation-k-neg-last} and
            isolating terms in $p$, we set $Z' = 4nlk(l + k)$ to obtain:
            \begin{equation}\label{equation-p}
                nk^3 - 4 = p(p - (Z' + 4)) 
            \end{equation}
            Assuming $nk^3 \le 4$. The \cref{equation-p} implies that either
            $p$ divides the nonzero difference $nk^3 - 4$, or $nk^3 = 4$. In
            the former case, we must have $p \le |nk^3 - 4| \le 3$; however,
            $p = 2$ is excluded by hypothesis, and $p = 3$ (possible only if
            $nk^3 = 1)$ requires $Z' = 0$, contradicting $Z' \ge 8$. If
            $nk^3 = 4$, then $(n,k)= (4,1)$ forces $p = Z' + 4$, which is a
            multiple of $4$ and thus not prime. Therefore, $nk^3 - 4 > 0$.

            \noindent Since $nk^3 - 4 > 0$ implies $p > Z' + 4$, we let $p =
            Z' + 4 + u$ for some integer $u\ge 1$. Substituting this expression
            into \cref{equation-p} and recalling that $Z' = 4nlk(l + k)$, we
            obtain:
            \begin{align}\label{equation-u}
                nk^3 -4 &= (4nlk(l + k) + 4 + u)u \nonumber \\
                nk(k^2 - 4ul(l + k)) &= (u + 2)^2.
            \end{align}
            Since the right-hand side of \cref{equation-u} is strictly
            positive, the integer $k^2 - 4ul(l + k)$ must be at least $1$.
            Rearranging this inequality yields
            \begin{equation}
                k(k - 4ul) > 4ul^2 > 0,
            \end{equation}
            which forces $k > 4ul$.

            \noindent Let $k = 4ul + w$, where the integer $w \ge 1$. Substituting
            this into \cref{equation-u} yields: 
            \begin{equation}\label{equation-u-2}
                n(4ul + w)(w^2 + 4ul(w - l)) = (u + 2)^2
            \end{equation}
            To conclude the case for $z < 0$, we show that all
            possible relationship between $w$ and $l$ leads to a contradiction.

            \noindent \textbf{Subcase 2.1: $w > l.$} Rearranging
            \cref{equation-u-2} yields a quadratic in $u$:
            \begin{equation}\label{equation-u-3}
                u^2[16nl^2(w-l) - 1] + u[4nlw(2w - l) - 4] + nw^3 - 4 = 0
            \end{equation}
            Since $w - l \ge 1$ and $l \ge 1$, every coefficient of
            \cref{equation-u-3} is strictly positive. Consequently, the
            equation can have no positive root $u \ge 1$.

            \noindent \textbf{Subcase 2.2: $w = l.$} Setting $L = nl^3$,
            \cref{equation-u-3} reduces to:
            \begin{equation}
                u^2 - 4(L - 1)u - (L - 4) = 0 \nonumber
            \end{equation}
            For $u$ to be an integer, its discriminant $\Delta' = L(4L -7)$ must
            be a perfect square. Letting $L(4L - 7) = Z^2$ and completing the
            square, we obtain the difference of squares:
            \begin{equation}
                (8L - 7)^2 - (4Z)^2 = 49 \nonumber
            \end{equation}
            The only integer factors of $49$ giving a valid solution are
            $(1, 49)$, which implies $L = 4$. Consequently, $nl^3 = 4$, which
            leads to $p = 39216$. This contradicts the primality of $p$.

            \noindent \textbf{Subcase 2.3:} $w < l.$ For the positivity of 
            \cref{equation-u-2} to hold, we must have:
            \begin{equation}
                w^2 + 4ul(w - l) = w^2 - 4ul(l - w) \ge 1.
            \end{equation}
            This inequality implies $4ul(l - w) < w^2$. Since $l - w \ge 1$, it
            follows that $4ul < w^2$, and consequently $2\sqrt{ul} < w$. Given
            our assumption that $w < l$, we obtain the bound:
            \begin{equation}
                2\sqrt{ul} < l \implies 4ul < l^2 \implies 4u < l.
            \end{equation}
            Recalling \cref{equation-u-2}, the positivity of the factors on the
            left-hand side implies $(u+2)^2 > 4ul + w > 4ul$. Substituting $l > 4u$
            then yields $(u+2)^2 > 16u^2$, or equivalently $15u^2 - 4u - 4 < 0$.
            This inequality has no integer solution for $u \ge 1$. Therefore, the
            case $w < l$ is impossible.

        \item $z > 0.$ As before, let $d = \gcd{(z, A)}$ and write $z = kd$ and
            $A = ld$, where $\gcd{(k, l)} = 1$. Since the right-hand side of
            \cref{equation-z} is positive, it follows that $z > A$, and thus $k
            > l \ge 1$. Proceeding analogously to the case $z < 0$, the same
            divisibility argument yields $d = nl(k - l)$ for some $n \in
            \mathbb{Z}^{+}$. Substituting these expressions into \cref{equation-z},
            we obtain:
            \begin{align}\label{equation-k-pos-1}
                nk(k^2 - 4pl(k - l)) & = (p-2)^2
            \end{align}
            Let $\Delta = k - l$. Substituting $k = l + \Delta$ into
            \cref{equation-k-pos-1} yields:
            \begin{align}\label{equation-l}
                (l + \Delta)^2 - 4pl\Delta &= \frac{(p-2)^2}{nk} \nonumber \\
                l^2 - (4p - 2)l\Delta + \Delta^2 &= \frac{(p-2)^2}{nk}
            \end{align}
            Let $f(l, \Delta) = l^2 - (4p - 2)l\Delta + \Delta^2$ denote the
            left-hand side of \cref{equation-l}. Since the right-hand side
            is positive, we must have $f(l, \Delta) > 0$. Because $f(l, \Delta)$
            is symmetric in its arguments, we may assume without loss of generality
            that $l \ge \Delta$. Dividing the condition $f > 0$ by $\Delta^2$
            and setting $r = \frac{l}{\Delta} \ge 1$, it follows that $r$ must
            lie outside the interval $[r_2, r_1]$ formed by the roots of the
            equation:
            \begin{equation*}
            r^2 - (4p - 2)r + 1 = 0
            \end{equation*}
            where $r_1$ and $r_2$ denote the larger and smaller roots, respectively.

            \noindent Because $r_1 > 4p - 3 > 5$ and $r \ge 1 > r_2$, we have $r > 4p -
            3$. Therefore, $k = (r + 1)\Delta > (4p - 2)\Delta$.
            Since $\Delta \ge 1$, we have $k \ge 4p - 1$. Substituting the
            bound $k \ge 4p - 1$ into the right-hand side of
            \cref{equation-l} and noting that $n \ge 1$, we obtain:
            \begin{equation}\label{ineq-contradiction}
                \frac{(p - 2)^2}{nk} \le \frac{(p - 2)^2}{4p - 1} =
                \frac{p^2 - 4p + 4}{4p - 1} < \frac{p}{4}. 
            \end{equation}
            To analyze the left-hand side, recall that $l > (4p - 3)\Delta$. We
            may write $l = (4p - 2)\Delta + u$ for some integer $u > -\Delta$.
            Substituting this into $f(l, \Delta)$ yields :
            \begin{equation}
            f(l, \Delta) = u^2 + (4p-2)u\Delta  + \Delta^2
            \end{equation}
            We now consider the possible values for the integer $u$.

            \noindent \textbf{Subcase 3.1: $u \ge 1.$} For $u \ge 1$, the
            left-hand side satisfies the bound:
            \begin{align*}
            f(l, \Delta) &= u^2 + (4p - 2)u\Delta + \Delta^2 \\
                    &\ge 1^2 + (4p - 2)\Delta + \Delta^2 \\
            &\ge 4p\Delta + (\Delta - 1)^2 \\
            &\ge 4p,
            \end{align*}
            where the last inequality holds for all $\Delta \ge 1$. Since $4p > p
            / 4$ for any $p > 3$, this contradicts the upper bound established
            in \cref{ineq-contradiction}.

            \noindent \textbf{Subcase 3.2: $u = 0$}. If $u = 0$, then $l =
            (4p - 2)\Delta$ and $f(l, \Delta) = \Delta^2$. It follows that $k =
            (4p - 1)\Delta$, and \cref{equation-l} requires
            $n(4p-1)\Delta^3 = (p-2)^2$. This implies that $(4p - 1)$ must
            divide $(p - 2)^2$. Using the identity $16(p-2)^2 =
            (4p-1)(4p-15) + 49$, we must have $(4p - 1) \mid 49$. However, for
            $p > 2$, the only divisor of $49$ such that $4p - 1 >
            7$ is $49$. Solving $4p - 1 = 49$ yields $p = 12.5$, which is not an
            integer.

            \noindent \textbf{Subcase 3.3: $u \le -1$.} Let $u' = -u$. Since
            $u > -\Delta$, we have the integer bounds $1 \le u' \le \Delta - 1$.
            Substituting this into $f(l, \Delta)$ yields:
            \begin{equation*}
            f(l, \Delta) =  (u')^2 - (4p-2)u'\Delta + \Delta^2.
            \end{equation*}

            \noindent We can view $f(l, \Delta)$ as a quadratic in $u'$. The vertex
            occurs at $u' = (2p - 1)\Delta $. For $p \ge 3$,
            this vertex lies to the right of the interval $[1, \Delta - 1]$.
            Consequently, the function is strictly decreasing on this domain, and
            its maximum value is attained at the endpoint $u' = 1$:
            \begin{equation*}
            f(l, \Delta) \le 1 - (4p - 2)\Delta + \Delta^2 .         
            \end{equation*}

            \noindent Since \cref{equation-l} requires $f(l, \Delta) \ge 1$, it
            follows that $\Delta^2 - (4p - 2)\Delta \ge 0$. Given $\Delta \ge 1$,
            dividing by $\Delta$ yields the lower bound $\Delta \ge 4p - 2$.
            Recalling that $k > (4p - 2)\Delta$ and substituting $\Delta \ge 4p - 2$,
            we get:
            \begin{equation*}
            k > (4p - 2)^2 = 16p^2 - 16p + 4.
            \end{equation*}

            \noindent From the relation $nkf(l, \Delta) = (p-2)^2$, it follows
            that  $nk \le (p-2)^2$, which forces the upper bound $k \le p^2 - 4p
            + 4$. Combining these inequalities yields:
            \begin{equation*}
            16p^2 - 16p + 4 < p^2 - 4p + 4 \implies 15p^2 - 12p < 0.
            \end{equation*}
            This is impossible for any $p \ge 3$. \qedhere
    \end{enumerate}
\end{proof}

\begin{thm}\label{no-fr-p2q}
	Let $p$ and $q$ be distinct primes. Then the graph $\Gamma(\mathbb{Z}_{p^2q})$
	does not admit fractional revival between any pair of distinct vertices.
\end{thm}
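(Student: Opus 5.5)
The plan is to reduce everything to the spectrum of the quotient matrix $C(\Upsilon_{p^2q})$ and to show that its eigenvalues are quartic irrationals. Such numbers can never satisfy the arithmetic conditions of \cref{proper-fr-symmetry}.

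First I would locate the candidate pairs. The proper divisors of $n=p^2q$ are $p,q,p^2,pq$, with cell sizes
\[
|V_p|=(p-1)(q-1),\quad |V_q|=p(p-1),\quad |V_{p^2}|=q-1,\quad |V_{pq}|=p-1.
\]
By \cite[Corollary 2]{Monterde2023Fractional}, together with \cref{no-pst-1}, \cref{no-pst-2} and \cref{cell-vs-twins}, proper FR can only occur between the two vertices of a cell of size $2$. By \cite[Lemma 4]{Monterde2023Fractional} such a pair must also be strongly cospectral. A cell of size $2$ arises only in a few cases:
\begin{enumerate}
\item $p=3$, giving $V_{pq}$, a pair of true twins with $\theta=-1$;
\item $q=3$, giving $V_{p^2}$, a pair of false twins with $\theta=0$;
\item $p=2$, giving $V_q$, false twins;
\item $\{p,q\}=\{2,3\}$, giving $V_p$.
\end{enumerate}
So it suffices to show that none of these pairs satisfies either condition of \cref{proper-fr-symmetry}.

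The key step is to prove that $\phi_C(x)$ from \cref{charpoly-p2q} is irreducible over $\mathbb{Q}$. \Cref{no-quadratic-factor-p2q} already excludes a factorization into two quadratics, and for $p=2$ irreducibility was observed inside that proof. It remains to exclude a rational, hence integer, root $r$.
\begin{itemize}
\item Reducing modulo $p$ gives $\phi_C\equiv x^3(x+2)$, so $r\equiv 0$ or $-2 \pmod p$.
\item By \cite[Theorem 3.5]{Bajaj2022On}, $r\neq 0$.
\item I would then substitute $r=ps$ or $r=ps-2$ and compare $p$-adic valuations of the terms, using that the constant term $p(p-1)^3(q-1)^2$ has $p$-valuation exactly $1$ when $p\nmid q-1$.
\item If that does not close the argument, I would bound $|r|$ by the Perron root of $C(\Upsilon_{p^2q})$ and by the interlacing with the $2\times 2$ principal submatrices. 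This confines $r$ to a short interval that can be checked against $\phi_C$ directly.
\end{itemize}
This linear-factor exclusion is the step I expect to be the main obstacle, since the case $p\mid q-1$ defeats the simple valuation count, much as it did in \cref{no-quadratic-factor-p2q}. A useful by-product of irreducibility is that neither $0$ nor $-1$ lies in $\sigma(C(\Upsilon_{p^2q}))$. Hence \cref{iff-strongly-cospectral} applies, and the decompositions $\Phi_u=\{\theta\}\cup\Phi^+_{u,v}$ described before \cref{proper-fr-symmetry} hold.

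To conclude, take a size-$2$ cell $\{u,v\}$. By the Perron--Frobenius theorem the Perron root $\rho$ of the connected graph $\Gamma(\mathbb{Z}_{p^2q})$ lies in $\Phi_u$. Since $\rho\notin\{0,-1\}$, in fact $\rho\in\Phi^+_{u,v}$. Moreover $\rho$ is a root of the irreducible quartic $\phi_C$, so it has degree $4$ over $\mathbb{Q}$. Both conditions of \cref{proper-fr-symmetry} require every element of $\Phi^+_{u,v}$ to have the form $\tfrac12(a+b_j\sqrt{\Delta})$, which has degree at most $2$. Hence proper FR fails for every pair.

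The same observation answers the periodicity part. By \cref{periodic-symmetry}, and more generally by \cite[Theorem 5.2]{Godsil2012State}, periodicity at a vertex needs its support to consist of integers or quadratic integers of a fixed shape, and $\rho$ is neither. This gives the non-periodicity used for $n=4q$ in \cref{fr-bipartite}, and completes the proof.
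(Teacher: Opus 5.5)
\textbf{The gap: $\phi_C$ is not irreducible in general.} Your whole conclusion rests on $\phi_C$ being irreducible over $\mathbb{Q}$. That is false, so the linear-factor exclusion you flagged as the main obstacle cannot be carried out. Take $p=3$, $q=11$ (so $n=99$). Then
\[
\phi_C(x)=x^4-x^3-120x^2+60x+2400=(x-10)(x^3+9x^2-30x-240).
\]
The cubic is irreducible, because its three real roots lie in $(-10,-9)$, $(-5,-4)$ and $(5,6)$. Worse, $10$ is exactly the Perron root. On the cells $V_{11},V_9,V_{33},V_3$ (of sizes $6,10,2,20$), the quotient matrix of $\pi_{\mathcal{D}}$ has the positive eigenvector $(1,1,2,\tfrac25)^T$ for the eigenvalue $10$. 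So for the candidate pair $V_{33}=\{33,66\}$, your step ``$\rho\in\Phi^+_{u,v}$ has degree $4$'' fails, since $\rho$ is an integer. The same objection applies to your non-periodicity remark. The valuation route you sketch cannot rescue this: here $3\nmid q-1$ and $10\equiv -2\pmod 3$, and the root still exists. The interlacing/Perron bound cannot rescue it either.

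\textbf{The repair, which is the paper's route.} Do not aim for irreducibility; use only \cref{no-quadratic-factor-p2q}.
\begin{enumerate}
\item If $\phi_C$ had two linear factors, or any quadratic factor, it would split into two monic integer quadratics. Hence $\phi_C$ is either irreducible or a linear factor times an irreducible cubic. So at most one eigenvalue of $C(\Upsilon_{p^2q})$ is rational, and every other eigenvalue has degree $3$ or $4$.
\item Replace the Perron root by the fact $|\Phi^+_{u,v}|\ge 2$. From $\mathbf{e}_u=\frac{1}{|V_d|}\mathbf{1}_{V_d}+\mathbf{z}$, the set $\Phi^+_{u,v}$ is the eigenvalue support of $d$ as a vertex of the connected weighted graph $C(\Upsilon_{p^2q})$. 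That support cannot be a single eigenvalue, since $d$ has a neighbour; alternatively, cite Monterde's Theorem 3.4.
\item Hence $\Phi^+_{u,v}$ contains an eigenvalue of degree at least $3$. Such a number is not of the form $\tfrac12(a+b_j\sqrt{\Delta})$, so both conditions of \cref{proper-fr-symmetry} fail, and so does \cref{periodic-symmetry}.
\item The same reasoning applied to the $C$-support of any cell shows that no vertex is periodic.
\end{enumerate}
Finally, take $0,-1\notin\sigma(C(\Upsilon_{p^2q}))$ from \cite[Theorem 3.5]{Bajaj2022On} and \cref{no-minus-one-p2q}, not as a by-product of irreducibility.
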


\begin{proof}
	Consider a set of twins $V_d = \{u, v\}$ in $\Gamma(\mathbb{Z}_{p^2q})$. Since
    neither $0$ nor $-1$ is an eigenvalue of $C(\Upsilon_{p^2q})$
    (see \cref{no-minus-one-p2q}), $u$ and $v$ are strongly cospectral by
    \cref{iff-strongly-cospectral}.
    Because $|V(\Gamma(\mathbb{Z}_{p^2q})| \ge 3$, the eigenvalue support
    satisfies $\Phi_{u, v}^{+} \ge 2$ by \cite[Theorem 3.4]{Monterde2022Strong}.
    By \cref{no-quadratic-factor-p2q}, the eigenvalues in $\Phi_{u, v}^{+}$
    are either roots of an irreducible cubic or quartic, or they consist of an
    integer and a root of an irreducible cubic. In either case, the support fails
    to satisfy both \cref{periodic-symmetry} and \cref{proper-fr-symmetry}. Thus,
    the claim follows.
\end{proof}

Some of our results thus far require that $-1 \notin
	\sigma(\Gamma(\mathbb{Z}_n)/\pi_{\mathcal{D}})$. While we conjecture this
holds broadly, a general proof remains challenging due to the fast growth of
the quotient matrix as the graph size $n$ increases. We next show that $-1$
is not in the spectrum of the symmetrized quotient matrix $C(\Upsilon_n)$
for $n \in \{pq, p^2q, p_1p_2p_3, p^k\}$. We note that the case $n = pq$ is
straightforward, as $\Gamma(\mathbb{Z}_{pq})$ is a complete bipartite graph
$K_{\varphi(p), \varphi(q)}$ with a known spectrum that not having $-1$ for
distinct primes $p, q$. 

\begin{thm}\label{no-minus-one-p2q}
	Let $n = p^2q$, where $p$ and $q$ are distinct primes. Then $-1 \notin
		\sigma(C(\Upsilon_n))$.
\end{thm}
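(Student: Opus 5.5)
We use the explicit characteristic polynomial \cref{charpoly-p2q} of $C(\Upsilon_{p^2q})$ quoted in the proof of \cref{no-quadratic-factor-p2q}, and show that $\phi_C(-1)\neq 0$. Put $M=(p-1)(q-1)$, which is at least $1$ since $p,q\ge 2$. Substituting $x=-1$ gives
\[
\phi_C(-1) = 1 + (p-2) - 2p(p-1)(q-1) - p(p-1)(p-2)(q-1) + p(p-1)^3(q-1)^2 .
\]
Collecting terms yields
\[
\phi_C(-1) = (p-1) + pM\bigl[(p-1)^2(q-1) - p\bigr].
\]

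Next we show that the bracket is nonnegative in all but one case. If $p\ge 3$, then $(p-1)^2(q-1)\ge 4(q-1)$. When $q\ge 2$ this is at least $4$. Moreover $(p-1)^2(q-1) - p\ge (p-1)^2-p = p^2-3p+1>0$ for $p\ge 3$, so the bracket is positive. Hence $\phi_C(-1)\ge (p-1)+pM>0$.

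If $p=2$, the bracket equals $(q-1)-2=q-3$. For $q\ge 3$ it is nonnegative, and then $\phi_C(-1)\ge p-1=1>0$. The only remaining case is $p=2$, $q=2$, which is excluded because the primes are distinct.

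In every case $\phi_C(-1)>0$, so $-1$ is not a root and $-1\notin\sigma(C(\Upsilon_n))$.

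The only real obstacle is getting the evaluation right. In particular, one must check the signs of the coefficients in \cref{charpoly-p2q}, and confirm that the bracket $(p-1)^2(q-1)-p$ only approaches zero in the small cases $p=2$, $q=3$ (where it equals $0$) and possibly $p=3$, $q=2$ (where it equals $2$). So I would finish by verifying those small cases directly. For $n=12$, we get $\phi_C(-1)=1+2\cdot 2\cdot 0=1\neq 0$. For $n=18$, the matrix in the earlier example gives $\phi_C(-1)=2+3\cdot 2\cdot 2\cdot 1=14\neq 0$. As a sanity check, for $n=4q$ the spectrum $\{\pm\sqrt{(q-1)(2\pm\sqrt2)}\}$ visibly avoids $-1$.
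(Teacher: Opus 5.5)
Your argument is correct and is essentially the paper's proof. You evaluate \cref{charpoly-p2q} at $x=-1$, rewrite it as $(p-1)+p(p-1)(q-1)\bigl[(p-1)^2(q-1)-p\bigr]$, use $(p-1)^2(q-1)-p\ge p^2-3p+1>0$ for $p\ge 3$, and reduce the case $p=2$ to $1+2(q-1)(q-3)\ge 1$.

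There is one harmless slip in your closing sanity check, which is redundant anyway since $n=18$ is already covered by the $p\ge 3$ case. For $(p,q)=(3,2)$ the bracket equals $4-3=1$, not $2$. So $\phi_C(-1)=2+3\cdot 2\cdot 1=8$, not $14$, which agrees with $\det\bigl(I+C(\Upsilon_{18})\bigr)=8$ computed from the matrix in the example.
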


\begin{proof}
	Let $\phi_C(x)$ be the characteristic polynomial of the
	quotient matrix, as derived in \cite[Example 3.1]{Bajaj2022On}. Evaluating
	$\phi_C(-1)$, we find $\phi_C(-1) = 1 + (p-2) - 2p(p-1)(q-1)
		- p(p-1)(p-2)(q-1) + p(p-1)^3(q-1)^2.$

	Factoring out the common term $(p-1)$, we obtain: \[
		\phi_C(-1) = (p-1) \left[ 1 - p^2(q-1) + p(p-1)^2(q-1)^2 \right].
	\]
	Since $p$ is prime, $(p-1) \ge 1$. Let $f(p, q) = p(p-1)^2(q-1)^2 -
		p^2(q-1)+1$. It suffices to show $f(p,q) \neq 0$ for all primes $p, q$.
	We write $f(p, q)$ as:
	\[
		f(p, q) = p(q-1) \left[ (p-1)^2(q-1) - p \right] + 1.
	\]

	Given $q \geq 2$, we note $(p-1)^2(q-1) - p \geq p^2 - 3p + 1$. For $p
		\geq 3$, this quadratic is strictly positive, implying $f(p, q) > 1$; for
	$p=2$, the expression becomes $2(q-1)(q-3) + 1$, which is nonzero for all
	prime $q$. It follows that $\phi_C(-1)$ never vanishes, and thus
	$-1 \notin \sigma(C(\Upsilon_n))$.
\end{proof}

\begin{thm}\label{no-minus-one-p1p2p3}
	Let $n = p_1p_2p_3$, where $p_1, p_2$, and $p_3$ are distinct primes. Then
	$-1\notin \sigma(C(\Upsilon_n))$.
\end{thm}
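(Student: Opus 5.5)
The plan is to make $C(\Upsilon_n)$ explicit, write $C(\Upsilon_n)+I$ in block form, and evaluate $\det(C(\Upsilon_n)+I)$ by a Schur complement followed by the matrix determinant lemma. The goal is to show this determinant is nonzero.

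\textbf{Step 1: the matrix.} For $n=p_1p_2p_3$ the proper divisors are $p_1,p_2,p_3$ and $w_1=p_2p_3$, $w_2=p_1p_3$, $w_3=p_1p_2$. Since $n$ is squarefree, $n\nmid d^2$ for every proper divisor $d$. Hence every cell induces a null graph and all diagonal entries of $C(\Upsilon_n)$ are $0$. The adjacencies are as follows. Each $p_i$ is adjacent only to $w_i$, since $n\mid p_id$ forces $p_jp_k\mid d$. The $w_i$ are pairwise adjacent, since $w_iw_j$ contains every prime.

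Put $a_i=p_i-1$ and $P=a_1a_2a_3$. Then $|V_{p_i}|=a_ja_k$ and $|V_{w_i}|=a_i$. By \cref{sym_quot_matrix}, the entry between $p_i$ and $w_i$ is $\sqrt{P}$, and the entry between $w_i$ and $w_j$ is $\sqrt{a_ia_j}$. Order the divisors as $p_1,p_2,p_3,w_1,w_2,w_3$ and let $\mathbf v=(\sqrt{a_1},\sqrt{a_2},\sqrt{a_3})^T$. Then
\[
C(\Upsilon_n)=\begin{pmatrix} O & \sqrt{P}\,I_3\\ \sqrt{P}\,I_3 & T\end{pmatrix},\qquad T=\mathbf v\mathbf v^T-\operatorname{diag}(a_1,a_2,a_3).
\]

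\textbf{Step 2: reduction to a $3\times 3$ determinant.} Take the Schur complement of the top-left block $I_3$ of $C(\Upsilon_n)+I$. This gives
\[
\det\bigl(C(\Upsilon_n)+I\bigr)=\det\bigl(I+T-PI\bigr)=\det\bigl(D+\mathbf v\mathbf v^T\bigr),
\]
where $D=\operatorname{diag}(\delta_1,\delta_2,\delta_3)$ and $\delta_i=1-a_i-P$. Each $\delta_i<0$, because $P\ge 1\cdot2\cdot4$. The matrix determinant lemma then gives
\[
\det\bigl(C(\Upsilon_n)+I\bigr)=\delta_1\delta_2\delta_3\Bigl(1-\sum_{i=1}^3\frac{a_i}{a_i+P-1}\Bigr).
\]
So it suffices to show that $\sum_i a_i/(a_i+P-1)<1$.

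\textbf{Step 3: the inequality.} This is the only step with real content, and I expect it to be the main obstacle, although a crude bound should suffice. Order the primes so that $a_1<a_2<a_3$. Since the $p_i$ are distinct primes, $a_1\ge1$, $a_2\ge2$ and $a_3\ge4$. Write each term as
\[
\frac{a_i}{a_i+P-1}=\frac{1}{1+(P-1)/a_i},\qquad \frac{P-1}{a_i}=a_ja_k-\frac{1}{a_i}.
\]
Using the lower bounds on the $a$'s gives $(P-1)/a_1\ge 7$, $(P-1)/a_2\ge 3.5$ and $(P-1)/a_3\ge 1.75$. Therefore the sum is at most
\[
\tfrac18+\tfrac1{4.5}+\tfrac1{2.75}<0.72<1.
\]
Hence $\det(C(\Upsilon_n)+I)\neq0$, and so $-1\notin\sigma(C(\Upsilon_n))$.
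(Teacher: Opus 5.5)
Your proof is correct, but it follows a different route from the paper's.

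The paper does not compute the determinant itself. It takes from Bajaj--Panigrahi (their Theorem 3.7) the closed form $(Q-1)^3 - S(Q-1) - 2Q$ for $\phi_C(-1)$, where $Q=\varphi(p_1p_2p_3)$ and $S=\varphi(p_1p_2)+\varphi(p_2p_3)+\varphi(p_3p_1)$. It rewrites the vanishing condition as $(Q-1)\bigl[(Q-1)^2-S-2\bigr]=2$ and concludes by integrality: $Q-1\ge 7$ cannot divide $2$.

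You instead derive the determinant directly from the block structure of $C(\Upsilon_n)+I$, using a Schur complement and then the matrix determinant lemma. You then finish with a size estimate rather than a divisibility argument. The two computations agree. Put $R=P-1$ and $e_2=a_1a_2+a_2a_3+a_3a_1$. Expanding your product $\delta_1\delta_2\delta_3\bigl(1-\sum_i a_i/(a_i+P-1)\bigr)$ gives $-(R^3-e_2R-2P)$. This is exactly $-\bigl[(Q-1)^3-S(Q-1)-2Q\bigr]$ with $Q=P$ and $S=e_2$. For $(p_1,p_2,p_3)=(2,3,5)$, both sides give $-229$.

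So your computation independently confirms the cited formula, up to an overall sign that does not affect vanishing. Having it, you could have closed with the paper's one-line integrality argument instead of Step 3.

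What your route gains:
\begin{itemize}
\item It is self-contained, with no reliance on an externally supplied characteristic polynomial.
\item It gives sign information, namely $\det(C(\Upsilon_n)+I)<0$.
\item The final inequality does not depend on the entries being integers.
\end{itemize}
What the paper's route gains is brevity, since it leans on the cited closed form.
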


\begin{proof}
	Let $Q = \varphi(p_1p_2p_3)$ and $S = \varphi(p_1p_2) + \varphi(p_2p_3) +
		\varphi(p_3p_1)$. From \cite[Theorem 3.7]{Bajaj2022On}, $\phi_C(-1)$ is
	given by:
	\[
		(Q - 1)^3 - S(Q - 1) - 2Q = 0,
	\]
	which can be written as:
	\[
		(Q - 1)[(Q - 1)^2 - S - 2] = 2.
	\]
	Since $p_1, p_2, p_3$ are distinct primes, the smallest value for $Q$ is
	$8$. For any other set of distinct primes, $Q > 8$. Consequently, $Q - 1 \ge
		7$, which cannot divide $2$. Thus, $x = -1$ is not a root.
\end{proof}

\begin{thm}\label{no-minus-one-prime-odd}
	Let $n = p^k$ where $p$ is a prime and $k$ is odd. Then $-1 \notin
		\sigma(C(\Upsilon_n))$.
\end{thm}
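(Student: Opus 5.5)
The plan is to work directly with the quotient matrix of $\pi_{\mathcal{D}}$ for $n=p^k$. Write $k=2m+1$ and index the proper divisors as $d_i=p^i$, $1\le i\le k-1$, with cell weights $w_i=|V_{p^i}|=\varphi(p^{k-i})=p^{k-i-1}(p-1)$. Then $p^i\sim p^j$ in $\Upsilon_n$ exactly when $i+j\ge k$. Since $k$ is odd, $\Gamma(V_{p^i})$ is null for $i\le m$ and complete for $i\ge m+1$. Because $C(\Upsilon_n)$ is similar to the unsymmetrized quotient $Q$, it suffices to show $\det(Q+I)\neq 0$. Writing $H=([i+j\ge k])_{i,j}$ and $W=\operatorname{diag}(w_1,\dots,w_{k-1})$, the diagonal correction for the complete cells gives
\[
Q+I = HW + E, \qquad E=\operatorname{diag}(\mathbf 1[i\le m]).
\]

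I would argue by eigenvectors rather than expanding the determinant. Suppose $(HW+E)\mathbf x=\mathbf 0$ and set $S_t=\sum_{j\ge t} w_jx_j$. Row $i$ then reads
\[
S_{k-i}+\mathbf 1[i\le m]\,x_i=0 \qquad (1\le i\le k-1).
\]
Subtracting consecutive rows $i$ and $i+1$ gives $w_{k-i-1}x_{k-i-1}=\mathbf 1[i+1\le m]\,x_{i+1}-\mathbf 1[i\le m]\,x_i$, which links index $k-1-i$ to the indices $i,i+1$. The row $i=k-1$ reads $S_1=0$, and the row $i=1$ reads $w_{k-1}x_{k-1}+x_1=0$.

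From these relations, each complete-cell coordinate $x_{k-1-i}$ (for $i<m$) becomes a difference of null-cell coordinates. Moreover, $x_m$ is forced via the row $i=m$ with $k-1-m=m$, which is the self-paired index that exists because $k$ is odd. Substituting back expresses everything in terms of $x_1$ through a first-order recursion $x_{i+1}=x_i+w_{k-i-1}x_{k-i-1}$. The remaining equation $S_1=0$ (equivalently the row $i=m$, together with $S_{m+1}$) then becomes a single consistency condition of the form $P(p)\,x_1=0$, where $P$ is an explicit polynomial in $p$.

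The main obstacle is showing $P(p)\neq 0$. Naive reductions fail: modulo $p-1$ every $w_j$ vanishes and $E$ is singular, and modulo $p$ only $w_{k-1}$ survives, which also gives zero once $m\ge 2$. So I would first factor out the exact power of $p$ (and of $p-1$) that divides $P$, tracking $p$-adic valuations from $v_p(w_j)=k-1-j$. I would then show the remaining cofactor is $\equiv \pm1 \pmod p$. If that fails, the fallback is a sign or size argument: the dominant term comes from $w_1=p^{k-2}(p-1)$ and should exceed the sum of all other terms. Checking the small cases $k=3,5$ by hand first should reveal the pattern.
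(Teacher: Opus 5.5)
Your setup is correct: $Q+I=HW+E$ with $E=\mathrm{diag}(\mathbf{1}[i\le m])$, and row $i$ reads $S_{k-i}+\mathbf{1}[i\le m]\,x_i=0$. But the proposal stops exactly where the proof should begin. Nonsingularity is deferred to an unproved claim $P(p)\neq 0$, and you only list tactics you might try.

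Worse, the elimination you describe is not what the equations do. For $i<m$, the relation $x_{i+1}=x_i+w_{k-i-1}x_{k-i-1}$ is the same equation you already used to express the complete-cell coordinate $x_{k-1-i}$. So it cannot also serve as a recursion carrying $x_1$ to $x_2,\dots,x_{m-1}$. For $m\ge 3$ those coordinates stay free, and a single condition $P(p)x_1=0$ could not force $\mathbf{x}=\mathbf{0}$. (For $m=2$, where your scheme does close up, one just gets $P=w_1=\varphi(p^4)$.)

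There is also a sign slip. Row $i+1$ minus row $i$ gives $w_{k-i-1}x_{k-i-1}=\mathbf{1}[i\le m]\,x_i-\mathbf{1}[i+1\le m]\,x_{i+1}$, the negative of what you wrote. This matters at the self-paired index, where the true condition is $(w_m-1)x_m=0$. So the fact actually needed is $\varphi(p^{m+1})\neq 1$, not $w_m\neq -1$.

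The missing idea is to use the differences for $i\ge m$, which you never exploit. For $m+1\le i\le 2m-1$ the right-hand side is zero, so $w_{2m-i}x_{2m-i}=0$. For $i=m$ you get $(w_m-1)x_m=0$. Since every $w_j=\varphi(p^{k-j})>0$ and $w_m=p^m(p-1)\ge 2$, all null-cell coordinates $x_1,\dots,x_m$ vanish. The differences with $i\le m-1$ then kill $x_{m+1},\dots,x_{2m-1}$, and row $1$ kills $x_{2m}$.

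Equivalently, after the unimodular row differencing the matrix has the block form $\left(\begin{smallmatrix} * & B\\ C & O\end{smallmatrix}\right)$ with $B,C$ anti-diagonal. Hence $\det(Q+I)=\pm(\varphi(p^{m+1})-1)\prod_{j\neq m}\varphi(p^{k-j})$, and no polynomial needing $p$-adic or size estimates ever arises.

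Completed this way, your argument is a self-contained alternative to the paper's. The paper instead invokes the Bajaj--Panigrahi factorization $\phi_C(x)=(-x)^m\prod_{t=1}^m\varphi(p^t)\det X$ and observes that the tridiagonal $X$ becomes diagonal at $x=-1$, with entries $1-\varphi(p^{m+1})$ and $-\varphi(p^{m+i})$. Both routes yield the same product.
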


\begin{proof}
	Let $k = 2m + 1$ for some $m \in \mathbb{Z}^+$, and let $\phi_C(x)$
	denote the characteristic polynomial of $C(\Upsilon_n)$. Following
	\cite[Theorem 4.1]{Bajaj2022On}, the polynomial is given by:
	\[
		\phi_C(x) = (-x)^m \prod_{t=1}^m \varphi(p^t) \det(X),
	\]
	where $X$ is the $m \times m$ tridiagonal matrix as defined in the theorem.

	Evaluating the expression at $x = -1$, the pre-factors $(-x)^m
		\prod \varphi(p^t)$ are strictly positive for $p \geq 2$. Thus, $-1$ is a
	root if and only if $\det(X) = 0$ at $x = -1$.

	Under the substitution $x = -1$, all terms in the definitions of the
	off-diagonal entries $b_j$ containing $(1+x)$ vanish, thus $X$ is
	a diagonal matrix. The diagonal entries are $a_1 = 1 - \varphi(p^{m+1})$ and
	$a_i = -\varphi(p^{m+i})$ for $i = 2, \dots, m$. Since $\varphi(p^{m+1}) \geq 2$
	for all primes $p$ and $m \geq 1$, it follows that $a_i < 0$ for all $i$.
	Consequently, $\det(X) = \prod_{i=1}^m a_i \neq 0$, which confirms that $-1$
	is not an eigenvalue.
\end{proof}

\begin{thm}\label{no-minus-one-prime-even}
	Let $n = p^k$ where $p$ is a prime and $k$ is even. Then $-1 \notin
		\sigma(C(\Upsilon_n))$.
\end{thm}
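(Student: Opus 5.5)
We will not rely on the determinantal formula of \cite{Bajaj2022On}. Instead, we will show directly that the quotient matrix $Q=A(\Gamma(\mathbb{Z}_{p^k})/\pi_{\mathcal{D}})$ has no eigenvector for $-1$. This suffices because $C(\Upsilon_n)=DQD^{-1}$ is similar to $Q$. Write $k=2m$ with $m\ge 1$ and index the proper divisors as $d_i=p^i$ for $1\le i\le k-1$. Put $n_i=|V_{p^i}|=\varphi(p^{k-i})>0$.

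From the description in \cref{zdg}, the structure of $Q$ is as follows:
\begin{itemize}
\item for $i\ne j$, $p^i\sim p^j$ in $\Upsilon_n$ if and only if $i+j\ge k$;
\item $\Gamma(V_{p^i})$ is complete if and only if $2i\ge k$, that is, $i\ge m$.
\end{itemize}
Hence $q_{ij}=n_j$ when $i\ne j$ and $i+j\ge k$, $q_{ii}=n_i-1$ when $i\ge m$, and all other entries are $0$.

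Suppose $Q\mathbf{w}=-\mathbf{w}$, and set $s_j=n_jw_j$ and $T_i=\sum_{j=k-i}^{k-1}s_j$. Row $i$ of the eigenvalue equation then reads as follows.
\begin{itemize}
\item For $i\ge m$, the diagonal term contributes $s_i-w_i$, so the row gives $T_i-w_i=-w_i$, that is, $T_i=0$.
\item For $i<m$, the index $i$ itself is not in the range $j\ge k-i$, so the row gives $T_i=-w_i$.
\end{itemize}

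Next we use the equations $T_i=0$ for $i=m,\dots,k-1$. Their successive differences $T_i-T_{i-1}=s_{k-i}$ vanish for $i=m+1,\dots,k-1$. This gives $s_j=0$, hence $w_j=0$, for $1\le j\le m-1$. Substituting this into the rows with $i<m$ gives $T_i=0$ for $i=1,\dots,m-1$ as well. We have $T_1=s_{k-1}$, and successive differences then give $s_j=0$ for $m+1\le j\le k-1$. Finally, $T_m=\sum_{j=m}^{k-1}s_j=s_m=0$. Thus $\mathbf{w}=\mathbf{0}$, so $-1$ is not an eigenvalue of $Q$, and hence not of $C(\Upsilon_n)$.

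The main obstacle is the bookkeeping of the diagonal term. For $i\ge m$ one must check that the self-contribution $n_i-1$ combines with the eigenvalue $-1$ into exactly the clean condition $T_i=0$, including the boundary case $i=m$, where $k-i=i$. One must also check that no index is double-counted. The degenerate case $k=2$ (so $m=1$, with the single complete cell $V_p$ of size $p-1$) should be checked separately. There the argument reduces to $T_1=s_1=0$, consistent with $\sigma(C(\Upsilon_{p^2}))=\{p-2\}$ in \cref{p-squared}.
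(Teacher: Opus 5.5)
Your argument is correct, and it takes a different route from the paper. The paper never works with the eigenvector equation. It imports the factorization $\phi_C(x)=(-x)^{m-1}\prod_{t=1}^m\varphi(p^t)\det(X)$, with $X$ an $m\times m$ tridiagonal matrix, from \cite[Theorem 4.2]{Bajaj2022On}. It then notes that at $x=-1$ every off-diagonal entry except $b_1$ vanishes, and reads off $\det X\neq 0$ as the determinant of a $2\times 2$ block times a product of negative diagonal entries.

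You instead work with the unsymmetrized quotient $Q$, which is legitimate since $C(\Upsilon_n)=DQD^{-1}$. You exploit the threshold structure $p^i\sim p^j\iff i+j\ge k$, so that each row becomes either $T_i=0$ or $T_i=-w_i$, and the partial sums $T_i$ telescope to force $\mathbf{w}=\mathbf{0}$. Your index bookkeeping is right, including the boundary row $i=m$.

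The paper's route is shorter and gives an explicit nonzero value of $\phi_C(-1)$, but it rests entirely on the entries of an external determinantal formula. Yours buys the following:
\begin{itemize}
\item It is self-contained, using only the cell structure described in Section~\ref{zdg}.
\item It handles $k=2$ uniformly, whereas the paper's $2\times 2$ leading block tacitly requires $m\ge 2$.
\item It transfers to odd $k=2m+1$ with one change: the middle rows give $(n_m-1)w_m=0$ with $n_m=\varphi(p^{m+1})\ge 2$. This is exactly the counterpart of the diagonal entry $1-\varphi(p^{m+1})$ in the proof of \cref{no-minus-one-prime-odd}.
\end{itemize}
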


\begin{proof}
	For $n = p^{2m}$, the characteristic polynomial is defined as: \[
		\phi_C(x) = (-x)^{m-1} \prod_{t=1}^m \varphi(p^t) \det(X),
	\]
	where $X$ is the $m \times m$ tridiagonal matrix as defined in
	\cite[Theorem 4.2]{Bajaj2022On}.

	Substituting $x = -1$, the off-diagonal entries $b_j$ vanish for
	$j \geq 2$, while $b_1 = 1$ remains constant. This results in a block
	diagonal structure for $X$, comprising a $2 \times 2$ leading principal
	submatrix $M$ and a diagonal trailing part. The diagonal entries are $a_1
		= 1$, $a_2 = 1 - \varphi(p^{m+1})$, and $a_i = -\varphi(p^{m-1+i})$ for
	$i \geq 3$.

	The determinant of the $2 \times 2$ block is given by:\[
		\det(M) = a_1 a_2 - b_1^2 = 1(1 - \varphi(p^{m+1})) - 1^2 =
		-\varphi(p^{m+1}).
	\]
	Since $\varphi(p^{m+1}) \neq 0$ and all remaining diagonal entries $a_i$ are
	strictly negative, the total determinant $\det(X) = \det(M) \prod_{i=3}^m a_i$
	is nonzero. Thus, $\phi_C(-1) \neq 0$, and $-1$ is not an
	eigenvalue.
\end{proof}

\bigskip

\section*{Acknowledgments}
This research project was supported by King Mongkut's University of Technology
Thonburi: KMUTT Partnering Initiative Grant fiscal year 2025 under KIRIM number
30000117.

\medskip

This research project is supported by the Faculty of Science, Mahidol
University.

\bibliographystyle{unsrt}
\nocite{*}
\bibliography{refs}

@article{Anderson1999On,
  author  = {Anderson, D. F. and Livingston, P. S.},
  title   = {The zero-divisor graph of a commutative ring},
  journal = {Journal of Algebra},
  volume  = {217},
  pages   = {434--447},
  year    = {1999}
}

@article{Aronstein1997Fractional,
  author  = {Aronstein, D. L. and Stroud, C. R.},
  title   = {Fractional wave-function revivals in the infinite square well},
  journal = {Physical Review A},
  volume  = {55},
  number  = {6},
  pages   = {4526--4537},
  year    = {1997},
}

@article{Bachman2012Perfect,
  author  = {Bachman, R. and Fredette, E. and Fuller, J. and Landry, M. and Opperman, M. and Tamon, C. and Tollefson, A.},
  title   = {Perfect state transfer on quotient graphs},
  journal = {Quantum Information and Computation},
  volume  = {12},
  number  = {3--4},
  pages   = {293--313},
  year    = {2012},
}

@article{Bajaj2022On,
  author  = {Bajaj, S. and Panigrahi, P.},
  title   = {On the adjacency spectrum of zero divisor graph of ring {$\mathbb{Z}_n$}},
  journal = {Linear Algebra and Its Application},
  volume  = {21},
  number  = {10},
  year    = {2022}
}

@article{Basic2009Perfect,
  author  = {Ba{\v{s}}i{\'c}, M. and Petkovi{\'c}, M. D. and Stevanovi{\'c}, D.},
  title   = {Perfect state transfer in integral circulant graphs},
  journal = {Applied Mathematics Letters},
  volume  = {22},
  number  = {7},
  pages   = {1117--1121},
  year    = {2009},
}

@article{Basic2010Perfect,
  author  = {Ba{\v{s}}i{\'c}, M. and Petkovi{\'c}, M. D.},
  title   = {Perfect state transfer in integral circulant graphs of non-square-free order},
  journal = {Linear Algebra and its Applications},
  volume  = {433},
  number  = {1},
  pages   = {149--163},
  year    = {2010}
}

@article{Bernard2018Graph,
  author  = {Bernard, P. A. and Chan, A. and Loranger, {\'E}. and Tamon, C. and Vinet, L.},
  title   = {A graph with fractional revival},
  journal = {Physics Letters A},
  volume  = {382},
  number  = {5},
  pages   = {259--264},
  year    = {2018},
}

@article{Bose2003Quantum,
  author  = {Bose, S.},
  title   = {Quantum communication through an unmodulated spin chain},
  journal = {Physical Review Letters},
  volume  = {91},
  number  = {20},
  pages   = {207901},
  year    = {2003},
  doi     = {10.1103/PhysRevLett.91.207901}
}

@article{Cao2020Perfect,
  author  = {Cao, X. and Chen, B. and Ling, S.},
  title   = {Perfect state transfer on {C}ayley graphs over dihedral groups: {T}he non-normal case},
  journal = {The Electronic Journal of Combinatorics},
  volume  = {27},
  number  = {2},
  pages   = {P2.28},
  year    = {2020},
}

@article{Cao2021Perfect,
  author  = {Cao, X. and Feng, K. and Tan, Y. Y.},
  title   = {Perfect state transfer on weighted {A}belian {C}ayley graphs},
  journal = {Chinese Annals of Mathematics, Series B},
  volume  = {42},
  number  = {4},
  pages   = {625--642},
  year    = {2021},
}

@article{Chan2019Quantum,
  author    = {Chan, A. and Coutinho, G. and Tamon, C. and Vinet, L. and Zhan, H.},
  title     = {Quantum fractional revival on graphs},
  journal   = {Discrete Applied Mathematics},
  volume    = {269},
  pages     = {86--98},
  year      = {2019},
  publisher = {Elsevier}
}

@article{Chan2020Fractional,
  author  = {Chan, A. and Coutinho, G. and Tamon, C. and Vinet, L. and Zhan, H.},
  title   = {Fractional revival and association schemes},
  journal = {Discrete Math},
  volume  = {343},
  number  = {11},
  pages   = {112018},
  year    = {2020}
}

@article{Chan2022Fundamentals,
  author  = {Chan, A. and Coutinho, G. and Drazen, W. and Eisenberg, O. and Godsil, C. and Kempton, M. and Lippner, G. and Tamon, C. and Zhan, H.},
  title   = {Fundamentals of fractional revival in graphs},
  journal = {Linear Algebra and its Applications},
  volume  = {655},
  pages   = {129--158},
  year    = {2022}
}

@article{Chattopadhyay2020Laplacian,
  author  = {Chattopadhyay, S. and Patra, K. L. and Sahoo, B. K.},
  title   = {Laplacian eigenvalues of the zero divisor graph of the ring {$\mathbb{Z}_n$}},
  journal = {Linear Algebra and its Applications},
  volume  = {584},
  pages = {267-286},
  year    = {2020}
}

@article{Cheung2011Perfect,
  author  = {Cheung, W. C. and Godsil, C.},
  title   = {Perfect state transfer in cubelike graphs},
  journal = {Linear Algebra and its Applications},
  volume  = {435},
  number  = {10},
  pages   = {2468--2474},
  year    = {2011}
}

@article{Christandl2004Perfect,
  author  = {Christandl, M. and Datta, N. and Ekert, A. and Landahl, A. J.},
  title   = {Perfect state transfer in quantum spin networks},
  journal = {Phys. Rev. Lett.},
  volume  = {92},
  number  = {18},
  pages   = {187902},
  year    = {2004}
}

@article{Dooley2014Fractional,
  author  = {Dooley, S. and Spiller, T. P.},
  title   = {Fractional revivals, multiple-{S}chr{\"o}dinger-cat states, and quantum carpets in the interaction of a qubit with {N} qubits},
  journal = {Phys. Rev. A.},
  volume  = {90},
  number  = {1},
  pages   = {012320},
  year    = {2014}
}

@article{Fan2013Pretty,
  author  = {Fan, X. and Godsil, C.},
  title   = {Pretty good state transfer on double stars},
  journal = {Linear Algebra Appl.},
  volume  = {438},
  pages   = {2346--2358},
  year    = {2013}
}

@article{Farhi1998Quantum,
  author  = {Farhi, E. and Gutmann, S.},
  title   = {Quantum computation and decision trees},
  journal = {Phys. Rev. A.},
  volume  = {58},
  number  = {2},
  pages   = {915},
  year    = {1998}
}

@article{Genest2016Quantum,
  author  = {Genest, V. X. and Vinet, L. and Zhedanov, A.},
  title   = {Quantum spin chains with fractional revival},
  journal = {Ann. Phys.},
  volume  = {371},
  pages   = {348--367},
  year    = {2016}
}

@article{Godsil2012State,
  author  = {Godsil, C.},
  title   = {State transfer on graphs},
  journal = {Discrete Math.},
  volume  = {312},
  number  = {1},
  pages   = {129--147},
  year    = {2012}
}

@misc{Godsil2017State,
  author        = {Godsil, C.},
  title         = {State transfer on graphs},
  year          = {2017},
  eprint        = {1102.4898v3},
  archivePrefix = {arXiv},
  primaryClass  = {math.CO}
}

@article{Godsil2022Fractional,
  author  = {Godsil, C. and Zhang, X.},
  title   = {Fractional revival on non-cospectral vertices},
  journal = {Linear Algebra and its Applications},
  volume  = {654},
  pages   = {69--88},
  year    = {2022}
}

@inproceedings{Grover1996Fast,
  author    = {Grover, L. K.},
  title     = {A fast quantum mechanical algorithm for database search},
  booktitle = {Proceedings of the Twenty-Eighth Annual ACM Symposium on Theory of Computing},
  year      = {1996},
  pages     = {212--219}
}

@article{Jitngam2026Quantum,
  author  = {Jitngam, S. and Kumam, P. and Sriwongsa, S.},
  title   = {Quantum fractional revival on unitary {C}ayley graphs over finite commutative rings},
  journal = {Linear Multilinear Algebra},
  pages   = {1--17},
  year    = {2026}
}

@article{Kendon2011Perfect,
  author  = {Kendon, V. M. and Tamon, C.},
  title   = {Perfect state transfer in quantum walks on graphs},
  journal = {Journal of Computational and Theoretical Nanoscience},
  volume  = {8},
  number  = {3},
  pages   = {422--433},
  year    = {2011},
}

@article{Meemark2014Perfect,
  author  = {Meemark, Y. and Sriwongsa, S.},
  title   = {Perfect state transfer in unitary {C}ayley graphs over local rings},
  journal = {Transactions on Combinatorics},
  volume  = {3},
  number  = {4},
  pages   = {43--54},
  year    = {2014},
}

@article{Monterde2022Strong,
  author  = {Monterde, H.},
  title   = {Strong cospectrality and twin vertices in weighted graphs},
  journal = {The Electronic Journal of Linear Algebra},
  volume  = {38},
  pages   = {494-518},
  year    = {2021}
}

@article{Monterde2023Fractional,
  author  = {Monterde, H.},
  title   = {Fractional revival between twin vertices},
  journal = {Linear Algebra and its Applications},
  volume  = {676},
  pages   = {25--43},
  year    = {2023}
}

@article{Rohith2015Visualizing,
  author  = {Rohith, M. and Sudheesh, C.},
  title   = {Visualizing revivals and fractional revivals in a {Kerr} medium using an optical tomogram},
  journal = {Physical Review A},
  volume  = {92},
  number  = {5},
  pages   = {053828},
  year    = {2015},
}

@article{Soni2025Quantum,
  author  = {Soni, R. and Choudhary, N. and Singh, N. P.},
  title   = {Quantum fractional revival in unitary {C}ayley graphs},
  journal = {Lobachevskii Journal of Mathematics},
  volume  = {46},
  number  = {4},
  pages   = {1922--1928},
  year    = {2025},
}

@article{Thongsomnuk2019Perfect,
  author  = {Thongsomnuk, I. and Meemark, Y.},
  title   = {Perfect state transfer in unitary {C}ayley graphs and gcd-graphs},
  journal = {Linear Multilinear Algebra},
  volume  = {67},
  number  = {1},
  pages   = {39--50},
  year    = {2019}
}

@article{Wang2024Fractional,
  author  = {Wang, J. and Wang, L. and Liu, X.},
  title   = {Fractional revival on {C}ayley graphs over {A}belian groups},
  journal = {Discrete Math},
  volume  = {347},
  number  = {12},
  pages   = {114218},
  year    = {2024}
}

@article{Wang2025Fractional,
  author  = {Wang, J. and Wang, L. and Liu, X.},
  title   = {Fractional revival on semi-{C}ayley graphs over {A}belian groups},
  journal = {Linear Multilinear Algebra.},
  volume  = {73},
  number  = {8},
  pages   = {1611--1633},
  year    = {2025}
}
\end{document}